\chardef\bslash=`\\ 
\newtheorem{theorem}{Theorem}[section]
\newtheorem*{ta}{Theorem A}
\newtheorem*{tb}{Theorem B}
\newtheorem*{tc}{Theorem C}
\newtheorem*{td}{Theorem D}
\newtheorem{proposition}[theorem]{Proposition}
\newtheorem{lemma}[theorem]{Lemma}
\newtheorem{definition-lemma}[theorem]{Definition-Lemma}
\newtheorem{corollary}[theorem]{Corollary}
\theoremstyle{definition}
\newtheorem{definition}[theorem]{Definition}
\theoremstyle{plain}
\numberwithin{equation}{theorem}
\theoremstyle{remark}
\newtheorem{remark}[theorem]{Remark}
\newtheorem{example}[theorem]{Example}
\def\Frac{\operatorname{Frac}}
\DeclareMathOperator{\Der}{Der}
\newif\ifhascomments \hascommentstrue
  \newcommand{\dragos}[1]{{\color{red}[[\ensuremath{\bigstar\bigstar\bigstar} #1]]}}
  \newcommand{\matt}[1]{{\color{red}[[\ensuremath{\spadesuit\spadesuit\spadesuit} #1]]}}
  \newcommand{\dragos}[1]{}
  \newcommand{\matt}[1]{}
\begin{document}
\title[Some topological criteria for the Poisson Dixmier-Moeglin equivalence]{Poisson Dixmier-Moeglin equivalence from a topological point of view}
\author[J.~Luo]{Juan Luo}
\address{Mathematics and Science College\\
Shanghai Normal University\\
Shanghai 200234\\
China
}
\email{luojuan@shnu.edu.cn} 

\author[X.~Wang]{Xingting Wang}
\address{Department of Mathematics\\
Howard University\\
2400 Sixth St. NW\\
Washington DC, 20059\\
USA}
\email{xingting.wang@howard.edu}

\author[Q.-S. Wu]{Quanshui Wu}
\address{School of Mathematical Sciences\\
Fudan University\\
Shanghai 200433\\
China}
\email{qswu@fudan.edu.cn}

\begin{abstract}
A complex affine Poisson algebra $A$ is said to satisfy the Poisson Dixmier-Moeglin equivalence if the Poisson cores of maximal ideals of $A$ are precisely those Poisson prime ideals that are locally closed in the Poisson prime spectrum ${\rm P. spec}\, A$ and if, moreover, these Poisson prime ideals are precisely those whose extended Poisson centers are exactly the complex numbers.  

In this paper, we provide some topological criteria for the Poisson Dixmier-Moeglin equivalence for $A$ in terms of the poset $({\rm P. spec}\, A,\subseteq)$ and the symplectic leaf or core stratification on its maximal spectrum.  In particular, we prove that the Zariski topology of the Poisson prime spectrum and of each symplectic leaf or core can detect the Poisson Dixmier-Moeglin equivalence for any complex affine Poisson algebra. Moreover, we generalize the weaker version of the Poisson Dixmier-Moeglin equivalence for a complex affine Poisson algebra proved in [J. Bell, S. Launois, O.L. S\'anchez, and B. Moosa, Poisson algebras via model theory and differential  algebraic geometry, J.~Eur.~Math.~Soc. (JEMS), 19(2017), no. 7, 2019--2049] to the general context of a commutative differential algebra.
\end{abstract}

\subjclass[2010]{
16D60, 
17B63,
13N15.
}

\keywords{Dixmier-Moeglin equivalence, commutative differential algebras, Poisson algebras, primitive ideals, prime spectrum}

\maketitle
\tableofcontents

\section{Introduction}
Let $k$ be a base field of characteristic zero. For a (left) noetherian $k$-algebra $S$, one of the most beautiful results in the direction of understanding the representation theory of $S$ is through the work of Dixmier \cite{Dix77} and Moeglin \cite{Moe80} when $S=U(\mathfrak g)$, the enveloping algebra of a finite dimensional complex Lie algebra $\mathfrak g$, by characterizing primitive ideals of $S$. More precisely, Dixmier and Moeglin showed the following types of prime ideals in $S$ coincide:
\begin{itemize}
\item the set of locally closed prime ideals (those constitute locally closed points in the prime spectrum of $S$);
\item the set of primitive ideals (those being annihilators of simple (left) modules over $S$); and
\item the set of rational prime ideals (those prime ideals $P$ in $S$ such that the center of the Goldie quotient ring of $S/P$ is algebraic over the base field $k$).
\end{itemize}
In their honor, nowadays we refer to $S$ as satisfying the {\it Dixmier-Moeglin equivalence} if the equivalence of three above properties holds for primes in the spectrum of $S$. Ever since, the Dixmier-Moeglin equivalence has been recognized as a very general phenomenon that happens for a wide range of algebras, see for example \cite{Von, Z, GoLet, BL14, BSM18, BRS10, GZ}. Very recently, it is proved in \cite{BWY19} that the Dixmier-Moeglin equivalence can be totally captured by the Zariski topology of the prime spectrum of $S$ under the cardinality assumption $\dim_k S<|k|$.

The notion of Poisson bracket, first introduced by Sim\'eon Denis Poisson, arises naturally in Hamiltonian mechanics and differential geometry. Poisson algebras have become deeply entangled with noncommutative geometry, integrable systems, topological field theory and representation theory of noncommutative algebras. They are essential in the study of the noncommutative discriminant \cite{BY18, NTY17} and representation theory of noncommutative algebras \cite{WWY17, WWY18}. In addition, there has been renewed interest in enveloping algebras of Poisson algebras \cite{LWZ15,LWZ17}.

The aim of this paper is to study an analogue of the Dixmier-Moeglin equivalence for complex affine Poisson algebras from a topological point of view. An {\it complex affine Poisson algebra} is a finitely generated commutative $\mathbb C$-algebra $A$ together with a bilinear map $\{-,-\}: A\times A \to A$ that is both a Lie bracket and a bi-derivation. The {\it Poisson center} of $A$ is the subalgebra of $A$ defined by $Z_P(A):=\{a\in A\,|\, \{a,b\}=0, \forall\, b\in A\}$. An ideal $I$ of $A$ is called a {\it Poisson ideal} if $\{I,A\}\subseteq I$. For each ideal $J$ of $A$, there is a largest Poisson ideal contained in $J$, which is called the {\it Poisson core} of $J$. The {\it Poisson primitive ideals} of $A$ are the Poisson cores of the maximal ideals in $A$. A {\it Poisson prime ideal} of $A$ is a prime ideal of $A$ that is also a prime ideal. The {\it Poisson prime spectrum} of $A$ is denoted by ${\rm P. spec}\, A$ consisting of all Poisson prime ideals in $A$, which is induced with the subspace topology from the prime spectrum ${\rm spec}\, A$. We say that $A$ satisfies the {\it Poisson Dixmier-Moeglin equivalence} provided the following types of Poisson prime ideals in ${\rm P. spec}\, A$ coincide:
\begin{itemize}
\item the set of locally closed Poisson prime ideals;
\item the set of Poisson primitive ideals; and 
\item the set of Poisson rational ideals (those Poisson prime ideals $I$ in $A$ such that $Z_P(\Frac(A/I))=\mathbb C$).
\end{itemize}

Some examples in which this equivalence holds have been given by \cite[Theorem 2.4, Proposition 2.13]{Oh99}. Later, this equivalence has been established for Poisson algebras with suitable rational torus actions by Goodearl \cite{Go06} in the general context of a commutative differential $k$-algebra and for any affine complex Poisson algebra with only finitely many Poisson primitive ideals by Brown and Gordon \cite[Lemma 3.4]{BrGor03}. On the other side, the tool of model theory and differential algebraic geometry has been recently employed to study the Poisson Dixmier-Moeglin equivalence for affine complex Poisson algebras; see references \cite{BLLM17,LL19}.

In this paper, our first main result is to assert that, for any complex affine Poisson algebra $A$, there is a purely topological characterization of the Poisson Dixmier-Moeglin equivalence. In particular, we show that the poset of Poisson prime ideals $({\rm P. spec}\, A,\,\subseteq)$ can detect whether or not the Poisson Dixmier-Moeglin equivalence holds.

\begin{ta}[Theorem \ref{thm:Poissonmain1}]
Let $A$ be a complex affine Poisson algebra. Then $A$ satisfies the Poisson Dixmier-Moeglin equivalence if and only if any Poisson prime ideal that has a set of minimal Poisson prime ideals over it with cardinality less than $|\mathbb C|$ has only finitely many minimal Poisson prime ideals.
\end{ta}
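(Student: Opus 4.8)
The plan is to read the three classes of Poisson primes off the poset $({\rm P.spec}\, A,\subseteq)$, so that the equivalence becomes a pure counting statement about the minimal Poisson primes lying immediately over a given one. The first move is a \emph{reduction}: one recalls the implications valid for every complex affine Poisson algebra, namely that a locally closed Poisson prime is Poisson primitive and a Poisson primitive ideal is Poisson rational (the latter is classical, see \cite{Oh99}, and is contained in the results of \cite{BLLM17}). For the first, write $L:=\bigcap_{P\subsetneq Q\in{\rm P.spec}\, A}Q$; if $P$ is locally closed then $L\supsetneq P$, so by Jacobsonness of $A/P$ there is a maximal ideal $\mathfrak m\supseteq P$ with $L\not\subseteq\mathfrak m$, and the Poisson core of $\mathfrak m$ is then a Poisson prime containing $P$, contained in $\mathfrak m$, not containing $L$, hence equal to $P$. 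It follows that $A$ satisfies the Poisson Dixmier--Moeglin equivalence if and only if every Poisson rational ideal is locally closed, so it suffices to describe, for a Poisson prime $P$, when $P$ is Poisson rational but not locally closed.

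\emph{Step 1 (a topological dichotomy): $P$ is locally closed $\iff$ $P$ has only finitely many minimal Poisson primes over it.} If the minimal Poisson primes over $P$ are $Q_1,\dots,Q_n$, then, $A$ having finite Krull dimension, every Poisson prime strictly containing $P$ contains some $Q_i$; hence $\bigcap_{P\subsetneq Q}Q=Q_1\cap\cdots\cap Q_n\supsetneq P$ and $P$ is locally closed. Conversely, reduce to $P=0$ with $A$ a Poisson domain and suppose $0$ is locally closed: then $J:=\bigcap_{0\ne Q\in{\rm P.spec}\, A}Q$ is a nonzero Poisson ideal, and (again by finite Krull dimension) it equals the intersection of the minimal nonzero Poisson primes. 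Since in characteristic zero the minimal primes $\mathfrak q_1,\dots,\mathfrak q_m$ over a Poisson ideal are again Poisson, and each minimal nonzero Poisson prime $Q$ contains $\sqrt J$ and hence some $\mathfrak q_j$, minimality of $Q$ forces $Q=\mathfrak q_j$; so there are at most $m$ of them. In particular: not locally closed $\Rightarrow$ infinitely many minimal Poisson primes over $P$.

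\emph{Step 2 (the crux): $P$ is Poisson rational $\iff$ $P$ has fewer than $|\C|$ minimal Poisson primes over it.} Passing to $B:=A/P$, the implication ``not rational $\Rightarrow$ at least $|\C|$ minimal Poisson primes'' goes as follows: a Poisson-central $w=p/q\in\Frac(B)$ with $w\notin\C$ is transcendental over $\C$ (algebraically closed), so $w\colon\Spec(B_q)\to\A^1$ is dominant, and for all but finitely many $c\in\C$ the principal ideal $(w-c)$ of $B_q$ is proper, nonzero and Poisson; a minimal prime over it has height one, is Poisson (minimal primes over Poisson ideals are Poisson in characteristic zero), and contracts to a minimal nonzero Poisson prime of $B$, distinct $c$'s giving distinct primes (their sum contains a unit). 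Conversely, suppose $P$ has $\ge|\C|$ minimal Poisson primes; by Step 1, $P$ is not locally closed, and after reducing to $B$ we may assume there are $\ge|\C|$ minimal nonzero Poisson primes $\{Q_\alpha\}$; the argument just used shows any subfamily of size $|\C|$ still has trivial intersection, so that (fixing the dimension and degree of the subvarieties $V(Q_\alpha)$, which is legitimate since $|\C|$ has uncountable cofinality) the $Q_\alpha$ form a Zariski-dense subset of a positive-dimensional subvariety $T$ of a suitable Hilbert scheme; the universal family over $T$ dominates $\Spec(B)$, the rational map $\Spec(B)\dashrightarrow T$ sending a general point to the member through it is defined, and composing it with a non-constant rational function on $T$ gives a non-constant $w\in\Frac(B)$ whose general fibre is a dense union of the Poisson subvarieties $V(Q_\alpha)$; every Hamiltonian vector field is therefore tangent to that fibre, i.e.\ annihilates $w$, so $w$ is Poisson central and $P$ is not Poisson rational.

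\emph{Conclusion and obstacle.} Combining Steps 1 and 2, ``$P$ is Poisson rational but not locally closed'' is equivalent to ``$P$ has at least $\aleph_0$ but fewer than $|\C|$ minimal Poisson primes over it,'' so, by the Reduction, $A$ satisfies the Poisson Dixmier--Moeglin equivalence if and only if no Poisson prime lies in that range, i.e.\ if and only if every Poisson prime with fewer than $|\C|$ minimal Poisson primes over it has only finitely many -- which is the asserted criterion. The hard part is the second implication in Step 2: manufacturing a genuine non-constant Poisson-central rational function out of a merely large supply of minimal Poisson primes. This is exactly where the size of $\C$ enters non-formally (countably many minimal Poisson primes do not suffice, which is what the known counterexamples to the Poisson Dixmier--Moeglin equivalence exploit), and where one must bring to bear the geometry of the symplectic-core stratification and the moduli of Poisson subvarieties, extending the differential-algebraic techniques of \cite{BLLM17}.
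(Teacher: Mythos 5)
Your Reduction and Step 1 are correct, and they coincide with the paper's own route: the implications ``locally closed $\Rightarrow$ Poisson primitive $\Rightarrow$ Poisson rational'' are Proposition \ref{weakPDME}, the equivalence ``locally closed $\Leftrightarrow$ finitely many minimal Poisson primes over $P$'' is Lemma \ref{lem:equilocallyclosed} combined with Lemma \ref{lem:equisep}, and the first half of your Step 2 (not rational $\Rightarrow$ at least $|\C|$ minimal Poisson primes over $P$) is essentially the first half of Lemma \ref{card} specialized to $k=\C$, carried out more directly by working with $w=p/q$ in the localization $B_q$ and the ideals $(w-c)$; that part is fine. The genuine gap is the other half of Step 2, i.e.\ the claim that $|\C|$-many minimal Poisson primes over $P$ force $P$ to be non-rational (equivalently, that rationality caps the number of minimal Poisson primes below $|\C|$). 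Your Hilbert-scheme sketch does not establish it: (a) you never show that the union of the selected subvarieties $V(Q_\alpha)$ is dense in $\Spec(B)$ (this can in fact be rescued: if all $Q_\alpha$ contained a fixed nonzero ideal, they would all be minimal Poisson primes over the nonzero Poisson ideal it generates, contradicting the finiteness in Lemma \ref{DeltaP}(iii) --- but you do not argue it); (b) more seriously, nothing guarantees that a general closed point of $\Spec(B)$ lies on a unique member of the family, so the classifying map from $\Spec(B)$ to $T$ ``sending a general point to the member through it'' is simply not defined; and (c) even granting such a map, the assertion that every Hamiltonian vector field annihilates the pulled-back rational function is precisely the statement to be proved, not a consequence of the set-up. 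You concede all of this in your final paragraph, so as written the crux of the theorem is missing.

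For comparison, the paper closes exactly this gap by a short algebraic descent rather than by moduli of Poisson subvarieties (Lemma \ref{lem1} together with the second half of Lemma \ref{card}). Working in $B=A/P$, one chooses a $\C$-basis of $B$, lets $F\subseteq\C$ be the countable subfield generated by the structure constants of multiplication and of a finite set of Hamiltonian derivations, and sets $B_0$ equal to the $F$-span of the basis, a countable noetherian differential $F$-algebra with $B_0\otimes_F\C\cong B$. Rationality of $(0)$ is used once, in Lemma \ref{lem1}: writing a nonzero element of a nonzero Poisson ideal of $B$ as $\sum_{i=1}^d r_i\otimes\lambda_i$ with $r_i\in B_0$ and $d$ minimal, applying the derivations shows the ratios $r_i/r_1$ are Poisson central, hence algebraic over the base field, and an algebraicity relation then produces an element of the form $a\otimes 1$, $0\neq a\in B_0$, inside the ideal. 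For each such $a$ there are only finitely many minimal Poisson primes over the Poisson semiprime ideal attached to $a\otimes 1$ (Lemma \ref{DeltaP}(iii)), and since $B_0$ is countable this yields a countable set $\mathscr S$ of nonzero Poisson primes such that every nonzero Poisson prime contains a member of $\mathscr S$; by Lemma \ref{lem:equisep} every minimal nonzero Poisson prime then belongs to $\mathscr S$, so there are fewer than $|\C|$ of them (indeed at most countably many). This is both more elementary and stronger than what your geometric sketch aims at, and it is where the largeness of $\C$ relative to $\dim_\C A$ enters --- through the cardinality of the descended model $B_0$, not through Hilbert schemes or the symplectic-core geometry.
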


This result says that for complex affine Poisson algebras the underlying Zariski topology on the Poisson prime spectrum ``sees" the Poisson Dixmier-Moeglin equivalence. In Definition \ref{lem:equisep},  the notion of {\it $\kappa$-separability} is introduced to accommodate this new topological characterization mentioned above for a fixed cardinality $\kappa$ in an arbitrary Zariski space. In particular, it is proved that a point in a Zariski space is locally closed if and only if it is $\aleph_0$-separable (Lemma \ref{lem:Zlocallyclosed}). This concept is motivated by the work in \cite{BWY19} on Dixmier-Moeglin equivalence for left noetherian algebras over large base fields and we expect it will play a role in further study of the (Poisson) Dixmier-Moeglin equivalence.

One consequence of Theorem A is that the Poisson Dixmier-Moeglin equivalence can be detected locally in the Poisson prime spectrum with the Zariski topology since the notion of $\kappa$-separability is a local property according to Lemma \ref{lem:localpropertysep}.

\begin{tb}[Theorem \ref{thm:Poissonmain2}]
Let $A$ be a complex affine Poisson algebra. Suppose
$${\rm P. spec}\, A~=~\bigcup\, X_i$$ is a union of locally closed subsets with each $X_i$, when endowed with the subspace topology, homeomorphic to ${\rm P. spec}\, A_i$ for some complex affine Poisson algebra $A_i$. Then $A$ satisfies the Poisson Dixmier-Moeglin equivalence if and only if each $A_i$ satisfies the Poisson Dixmier-Moeglin equivalence.
\end{tb}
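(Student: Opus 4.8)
\emph{Proof strategy.} The plan is to reduce everything to Theorem~\ref{thm:Poissonmain1} and to exploit that the criterion it provides is local for the Zariski topology. First I would record a topological restatement: for any complex affine Poisson algebra $B$, the space ${\rm P.spec}\,B$ and each of its locally closed subspaces are Zariski spaces, so the notion of $\kappa$-separability applies to all of them; by definition a Poisson prime $Q$ has strictly fewer than $|\mathbb C|$ minimal Poisson primes over it exactly when $Q$ is $|\mathbb C|$-separable, while it has only finitely many such if and only if it is $\aleph_0$-separable, equivalently (by Lemma~\ref{lem:Zlocallyclosed}) locally closed. Hence Theorem~\ref{thm:Poissonmain1} reads: $B$ satisfies the Poisson Dixmier-Moeglin equivalence if and only if every $|\mathbb C|$-separable point of ${\rm P.spec}\,B$ is locally closed. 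Transporting this along the homeomorphism $X_i\cong{\rm P.spec}\,A_i$ (all notions involved being purely topological), $A_i$ satisfies the Poisson Dixmier-Moeglin equivalence if and only if every $|\mathbb C|$-separable point of $X_i$ is locally closed in $X_i$.

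It then remains to prove the purely topological assertion: setting $Z:={\rm P.spec}\,A=\bigcup_i X_i$, every $|\mathbb C|$-separable point of $Z$ is locally closed if and only if, for each $i$, every $|\mathbb C|$-separable point of $X_i$ is locally closed in $X_i$. The bridge is that, for a point $x$ lying in a locally closed subspace $Y$ of $Z$, the point $x$ is locally closed in $Y$ if and only if it is locally closed in $Z$, and likewise $x$ is $|\mathbb C|$-separable in $Y$ if and only if it is $|\mathbb C|$-separable in $Z$. The first equivalence is elementary point-set topology: writing $Y=W\cap D$ with $W$ open and $D$ closed in $Z$, one checks that a presentation of $\{x\}$ as the intersection of an open and a closed subset of one space can be converted into such a presentation over the other. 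The second equivalence is precisely Lemma~\ref{lem:localpropertysep} (that $\kappa$-separability is a local property), applied to the locally closed subspace $X_i\subseteq Z$.

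Granting this, the main equivalence follows at once. For the forward implication: if every $|\mathbb C|$-separable point of $Z$ is locally closed and $x\in X_i$ is $|\mathbb C|$-separable in $X_i$, then $x$ is $|\mathbb C|$-separable in $Z$, hence locally closed in $Z$, hence locally closed in $X_i$. For the converse: if the conclusion holds for every $X_i$ and $x\in Z$ is $|\mathbb C|$-separable in $Z$, choose $i$ with $x\in X_i$ (possible since the $X_i$ cover $Z$), note $x$ is $|\mathbb C|$-separable in $X_i$, hence locally closed in $X_i$, hence locally closed in $Z$. Chaining this with the two restatements from the first paragraph gives: $A$ satisfies the Poisson Dixmier-Moeglin equivalence $\iff$ every $|\mathbb C|$-separable point of $Z$ is locally closed $\iff$ for each $i$, every $|\mathbb C|$-separable point of $X_i$ is locally closed in $X_i$ $\iff$ every $A_i$ satisfies the Poisson Dixmier-Moeglin equivalence.

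I expect the only genuinely delicate point to be the stability of $\kappa$-separability under restriction to a locally closed subspace, i.e.\ the exact content of Lemma~\ref{lem:localpropertysep}. Since $\kappa$-separability encodes a cardinality bound on a covering of $\overline{\{x\}}\setminus\{x\}$ by closed sets avoiding $x$, one must transport such coverings between $Z$ and $X_i$ without inflating the bound; the closed-subspace case is immediate because $\overline{\{x\}}$ is computed the same way on both sides, whereas the open-subspace case requires one additional closed set, namely the complement of the open piece. Everything else in the argument is bookkeeping.
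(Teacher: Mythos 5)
Your proposal is correct and follows essentially the same route as the paper: the paper proves this as Theorem \ref{thm:stratification}, whose proof likewise combines the separability criterion of Theorem \ref{thm:main1} (the general form of Theorem \ref{thm:Poissonmain1}) with the locality of $\kappa$-separability from Lemma \ref{lem:localpropertysep}. Your only cosmetic deviation is phrasing the target condition as ``locally closed'' (using Lemma \ref{lem:Zlocallyclosed} and elementary transitivity of local closedness) instead of ``$\aleph_0$-separable,'' which is an equivalent bookkeeping choice.
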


The relevance of this theorem is seen in the fact that the semiclassical limits of many quantum algebras have Poisson prime spectra of this form \cite{Go06}, although generally in these cases work of Goodearl \cite{Go06}, using additional information about the stratification coming from the rational action of an algebraic torus on the Poisson algebra, allows one to deduce that the Poisson Dixmier-Moeglin equivalence holds. Our Theorem B again strengthens the fact that for complex affine Poisson algebras an abstract stratification with parts homeomorphic to complex affine schemes of finite type immediately yields the Poisson Dixmier-Moeglin equivalence without having any underlying action of an algebraic group.

It is important to notice that the tool of stratification is a key ingredient in the important work of Goodearl and Letzter \cite{GoLet} in obtaining the Dixmier-Moeglin equivalence for many classes of quantum algebras and the corresponding Poisson Dixmier-Moeglin equivalence for their semiclassical limits \cite{Go06}. When we turn to the maximal (or prime) spectrum of a complex affine Poisson algebra $A$, there are several stratifications. One algebraic stratification is to use the Poisson core, where two maximal (or prime) ideals in $A$ are set to be in the same {\it symplectic core} if they share the same Poisson core. Our next result provides another topological characterization of the Poisson Dixmier-Moeglin equivalence for complex affine Poisson algebras in terms of their symplectic core stratification.

\begin{tc}[Theorem \ref{thm:algebraicDME}]
Let $A$ be a complex affine Poisson algebra. Then $A$ satisfies the Poisson Dixmier-Moeglin equivalence if and only if all symplectic cores in its maximal (or prime) spectrum are locally closed.
\end{tc}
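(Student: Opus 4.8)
The plan is to express each symplectic core as a difference of closed sets and then feed the resulting condition into the poset criterion of Theorem~\ref{thm:Poissonmain1} (Theorem~A). First I would record the usual description of symplectic cores. Fix a Poisson primitive ideal $P=\mathcal P(\mathfrak m)$; for any maximal (or prime) ideal $\mathfrak n$ one has $\mathcal P(\mathfrak n)=P$ exactly when $P\subseteq\mathfrak n$ and no Poisson prime $Q$ with $P\subsetneq Q$ satisfies $Q\subseteq\mathfrak n$. Hence, writing $\M(P)$ for the set of Poisson primes minimal among those properly containing $P$, the symplectic core of $P$ is
\[
\cC(P)\;=\;V(P)\setminus\!\!\bigcup_{\substack{Q\in\mathrm{P.spec}\,A\\ P\subsetneq Q}}\!\! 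V(Q)\;=\;V(P)\setminus\!\!\bigcup_{Q\in\M(P)}\!\! V(Q),
\]
valid in $\mathrm{Max}\,A$ and in $\Spec A$ (in the latter $P$ is the generic point of $\cC(P)$); see \cite{BrGor03}.

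The heart of the matter is the following claim, which I would prove next: for $P$ Poisson primitive, $\cC(P)$ is locally closed (in $\mathrm{Max}\,A$, equivalently in $\Spec A$) if and only if $\M(P)$ is finite --- equivalently, by Lemma~\ref{lem:Zlocallyclosed}, if and only if $P$ is a locally closed point of $\mathrm{P.spec}\,A$. One direction is immediate: if $\M(P)$ is finite, the displayed formula writes $\cC(P)$ as the closed set $V(P)$ minus a finite union of closed sets. For the converse, suppose $\cC(P)$ is locally closed. In $\Spec A$ it contains its generic point $P$, so it is dense in the irreducible set $V(P)$; being locally closed and dense in an irreducible set, it is open in $V(P)$, whence $\bigcup_{Q\in\M(P)}V(Q)=V(P)\setminus\cC(P)$ is closed. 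In $\mathrm{Max}\,A$ the same conclusion holds provided $\cC(P)$ is dense in $V(P)$, which I address below. Either way, the Zariski closure of $\bigcup_{Q\in\M(P)}V(Q)$ is $V(I)$ with $I=\bigcap_{Q\in\M(P)}Q$ a Poisson radical ideal that properly contains $P$ (since $\cC(P)\ne\emptyset$). Its minimal primes $R_1,\dots,R_n$ are finite in number, are again Poisson primes (a standard fact in characteristic zero), and each properly contains $P$; and since $I=R_1\cap\cdots\cap R_n$, every $Q\in\M(P)$ contains some $R_j$, so $P\subsetneq R_j\subseteq Q$ with $R_j$ Poisson forces $Q=R_j$ by minimality. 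Thus $\M(P)\subseteq\{R_1,\dots,R_n\}$ is finite.

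With the claim available, both directions of the theorem are short, and this is where I would use Theorem~A and the cardinality of $\mathbb C$. If $A$ satisfies the Poisson Dixmier-Moeglin equivalence, every Poisson primitive ideal is a locally closed point of $\mathrm{P.spec}\,A$, so by the claim every symplectic core is locally closed. Conversely, suppose every symplectic core is locally closed; I would verify the hypothesis of Theorem~\ref{thm:Poissonmain1}. If some Poisson prime $P$ had at least $\aleph_0$ but fewer than $|\mathbb C|$ minimal Poisson primes over it, then $A/P$ would have positive Krull dimension, so $V(P)$ would be an irreducible $\mathbb C$-variety of positive dimension; invoking the classical fact that such a variety is not a union of fewer than $|\mathbb C|$ proper closed subsets, $\cC(P)=V(P)\setminus\bigcup_{Q\in\M(P)}V(Q)$ would be nonempty, indeed dense in $V(P)$ and meeting $\mathrm{Max}\,A$. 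Choosing a maximal ideal $\mathfrak m\in\cC(P)$ gives $P=\mathcal P(\mathfrak m)$, so $P$ is Poisson primitive and $\cC(P)$ is a symplectic core, hence locally closed by assumption; but then the claim forces $\M(P)$ to be finite, a contradiction. Therefore Theorem~\ref{thm:Poissonmain1} applies and $A$ satisfies the Poisson Dixmier-Moeglin equivalence.

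The step I expect to be the main obstacle is the maximal-spectrum half of the claim, precisely because the cardinality of $\mathbb C$ is genuinely needed there: to upgrade ``$\cC(P)$ locally closed'' to ``$\cC(P)$ open in $V(P)$'' one must first establish that $\cC(P)$ is dense in $V(P)$, and this density comes from the variety-covering fact rather than from point-set topology; once density is in hand, the rest is a routine application of the Noetherianity of $\Spec(A/P)$ together with the stability of the Poisson property under passage to minimal primes. A related subtlety, again dispatched by the cardinality input, is that symplectic cores are parametrized only by Poisson \emph{primitive} ideals, so in the converse direction the variety-covering fact is exactly what converts the abstract ``bad'' Poisson prime produced by Theorem~\ref{thm:Poissonmain1} into a genuine Poisson primitive ideal whose symplectic core fails to be locally closed.
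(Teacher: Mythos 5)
Your argument is correct, and it reaches the theorem by a route that genuinely differs from the paper's in the decisive step. The paper deduces Theorem \ref{thm:algebraicDME} from the general differential statement (Proposition \ref{prop:DMECore}), whose proof runs through Goodearl's fact that $\pi\colon \Spec A\to {\rm P.spec}\,A$ is a topological quotient, the Jacobson property, and Lemma \ref{lem:equilocallyclosed}; crucially, for the maximal-spectrum version it then discharges the closure condition $\overline{\mathscr C(\mathfrak m)}=\{\mathfrak q\,:\,\mathcal P(\mathfrak m)\subseteq\mathfrak q\}$ by importing Brown--Gordon's symplectic-leaf results (Lemma \ref{lem:slc}, i.e.\ \cite{BrGor03}), an essentially analytic input. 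You never use leaves or the quotient map: you write each core as $V(P)\setminus\bigcup_{Q\in\M(P)}V(Q)$ with $\M(P)$ the minimal Poisson primes over $P$, prove ``core locally closed $\Leftrightarrow$ $\M(P)$ finite $\Leftrightarrow$ $P$ locally closed in ${\rm P.spec}\,A$'' (using that minimal primes over a Poisson ideal are Poisson, which is Lemma \ref{DeltaP}(i),(iii)), and you replace the Brown--Gordon density of the core in $V(P)\cap{\rm max}\,A$ by the classical cardinality fact that a positive-dimensional irreducible complex affine variety is not covered by fewer than $|\mathbb C|$ proper closed subvarieties, feeding the result into Theorem \ref{thm:Poissonmain1} rather than into the ``every primitive is locally closed'' criterion of Theorem \ref{thm:main1}. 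What each approach buys: the paper's route yields the stronger unconditional closure equality (and the smoothness remark) and a statement valid for general commutative differential algebras over large fields, with the Poisson case obtained by specialization; yours is more self-contained on the Poisson side, avoiding the symplectic foliation entirely at the cost of the uncountability covering lemma. One small caveat: as stated, the maximal-spectrum half of your Claim for an \emph{arbitrary} Poisson primitive $P$ still needs the density of $\mathscr C(P)$ in $V(P)\cap{\rm max}\,A$, which your covering argument only supplies once one knows $|\M(P)|<|\mathbb C|$; this is automatic (primitive $\Rightarrow$ $|\mathbb C|$-separable by Theorem \ref{thm:Prationalprimitive}), but you should say so explicitly --- in the proof of the theorem itself the gap is harmless, since you invoke that half of the Claim only for the ``bad'' prime, where $|\M(P)|<|\mathbb C|$ is part of the hypothesis being tested.
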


Moreover in the case of complex affine Poisson algebras, we can consider the symplectic foliation on the underlying complex Poisson varieties in the sense of \cite{Wein83} for smooth Poisson manifolds and generally follow the work of \cite{BrGor03} for the singular case.  In \cite{BrGor03}, the Poisson bracket of a complex affine Poisson algebra $A$ is said to be {\it algebraic} if all the symplectic leaves are locally closed. Now suppose the Poisson bracket of $A$ is algebraic. It is further shown that the symplectic leaf stratification coincides with the symplectic core stratification \cite[Proposition 3.6(2)]{BrGor03}. Therefore, we know $A$ satisfies the Poisson Dixmier-Moeglin equivalence (Corollary \ref{cor:algebraicDME}). 

In general, it is difficult to show the Poisson bracket of $A$ is algebraic if there are infinitely many symplectic leaves. But when there are finitely many symplectic leaves, one result of  \cite[Proposition 3.7(1)]{BrGor03} ensures that the Poisson bracket of $A$ is algebraic. Our last result provides a $G$-equivariant version of the aforementioned result when we have a rational action of an algebraic group $G$ on $A$ with its induced action on the symplectic leaves and cores in the corresponding stratifications.

\begin{td}[Theorem \ref{thm:GalgebraDME}]
Let $G$ be an algebraic group acting rationally by Poisson automorphism on a complex affine Poisson algebra $A$. If the set of $G$-orbits of symplectic leaves (or symplectic cores) is finite, then $A$ satisfies the Poisson Dixmier-Moeglin equivalence.
\end{td}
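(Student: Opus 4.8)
The strategy is to combine the topological criterion of Theorem~\ref{thm:algebraicDME} with a $G$-equivariant refinement of \cite[Prop.~3.7(1)]{BrGor03}. I would begin with two preliminary reductions. Since $[G:G^{\circ}]<\infty$, each $G$-orbit of symplectic leaves (resp.\ cores) decomposes into finitely many $G^{\circ}$-orbits, so we may assume $G$ connected. Moreover every symplectic leaf lies in a unique symplectic core, and a Poisson automorphism carries leaves to leaves and cores to cores (distinct ones being disjoint), so the rule ``$G$-orbit of a leaf $L$ $\mapsto$ $G$-orbit of the core containing $L$'' is a well-defined surjection from $G$-orbits of leaves onto $G$-orbits of cores. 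Hence the hypothesis about leaves implies the one about cores, and it suffices to prove the theorem assuming only finitely many $G$-orbits of symplectic cores.

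By Theorem~\ref{thm:algebraicDME} we must then show that every symplectic core in $\operatorname{Max} A$ is locally closed. Choose representatives $\mathcal C_1,\dots,\mathcal C_r$ of the $G$-orbits of cores, with Poisson cores $P_1,\dots,P_r$, so that $\overline{\mathcal C_i}=V(P_i)\cap\operatorname{Max} A$ with $\mathcal C_i$ dense there, and put $U_i:=\bigcup_{g\in G}g\mathcal C_i$. Then $\operatorname{Max} A=U_1\sqcup\dots\sqcup U_r$, each $U_i$ is $G$-stable, and $\mathcal C_i=U_i\cap V(P_i)$: indeed, if $\mathfrak m\in U_i$ with $P_i\subseteq\mathfrak m$ then $P_i\subseteq\mathcal P(\mathfrak m)=gP_i$ for some $g$, and since $A$ is noetherian the chain $P_i\subseteq gP_i\subseteq g^2P_i\subseteq\cdots$ stabilises, forcing $gP_i=P_i$ and $\mathfrak m\in\mathcal C_i$. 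Applying the homeomorphism $g$ gives $g\mathcal C_i=U_i\cap V(gP_i)$; hence once each $U_i$ is shown to be locally closed in $\operatorname{Max} A$, so is every symplectic core, and we are done. The whole theorem is thus reduced to the $G$-equivariant analogue of \cite[Prop.~3.7(1)]{BrGor03}: \emph{each $U_i$ is locally closed in $\operatorname{Max} A$.}

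For this I would proceed by noetherian induction on $\operatorname{Max} A$, peeling off at each stage an open $G$-stable stratum equal to one of the $U_i$. Three facts drive the induction. (i) A Zariski-closed union of symplectic cores is a closed Poisson subvariety $V(I)$ --- its defining ideal is the intersection of the Poisson cores of the cores it contains, hence a Poisson ideal --- so the complement of whatever open $G$-stable stratum we peel off is $\operatorname{Max}$ of a complex affine Poisson algebra $A/I$ carrying a rational $G$-action with strictly fewer orbits of cores. (ii) The canonical Brown--Gordon stratification of $\operatorname{Max} A$ by smooth locally closed pieces on which the Poisson structure is regular is built only from the Poisson structure, hence is $G$-stable; so it suffices to work on one such regular piece $S$, where the symplectic leaves are the integral leaves of an involutive holomorphic distribution and $G$ permutes them with finitely many orbits. (iii) Chevalley's theorem on images of morphisms, applied to the action maps $G\times(V(P_i)\cap\operatorname{Max} A)\to\operatorname{Max} A$, gives constructibility of $\bigcup_{g\in G}g(V(P_i)\cap\operatorname{Max} A)$; combining this with the finiteness of the number of orbits and with the regularity of $S$, one upgrades each $U_i\cap S$ from constructible to locally closed, and in particular one of the $U_i$ is open. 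Peeling off that open $U_i$ and feeding (i) back into the induction finishes the argument.

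The main obstacle is precisely this last point: the step \emph{``a $G$-orbit of symplectic leaves on a regular Poisson stratum is a locally closed subvariety when there are only finitely many such orbits.''} Without the group action this is false --- a single symplectic leaf, or the symplectic core containing it, is a priori only a countable intersection of complements of Zariski-closed sets, which is exactly why an affine Poisson bracket can fail to be algebraic --- so the finiteness hypothesis is essential and must be dovetailed carefully with the regularity of the stratum (equivalently, with lower semicontinuity of the rank of the Poisson bivector), with the possible reducibility of $\operatorname{Max} A$, and with the bookkeeping relating the leaf and core stratifications. An alternative packaging of the same induction goes through Theorem~\ref{thm:Poissonmain2}: at each step one writes $\mathrm{P.spec}\, A=\mathrm{P.spec}(A/I)\cup\bigcup_k D(f_k)$ for a $G$-stable Poisson ideal $I$ with generators $f_k$, reducing to complex affine Poisson algebras with strictly fewer $G$-orbits of symplectic leaves and eventually to ones with only finitely many symplectic leaves, where \cite[Prop.~3.7(1)]{BrGor03} together with Corollary~\ref{cor:algebraicDME} concludes.
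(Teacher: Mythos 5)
Your reductions are correct and coincide with the paper's set-up: finiteness of $G$-orbits of leaves implies finiteness of $G$-orbits of cores (each core is a disjoint union of leaves, Lemma \ref{lem:slc}(iii)), Theorem \ref{thm:algebraicDME} reduces the theorem to local closedness of all symplectic cores, and your observation that $\mathcal C_i=U_i\cap V(P_i)$ (via the noetherian chain argument) correctly reduces this further to local closedness of the orbits $U_i$. The genuine gap is exactly the step you yourself flag as ``the main obstacle'' and never carry out: proving that the orbits, or at least the top ones, are locally closed. Chevalley's theorem does not do this as stated: the image of $G\times (V(P_i)\cap {\rm max}\,A)\to {\rm max}\,A$ is $\bigcup_{g}g\,V(P_i)=\bigcup_g\overline{g\mathcal C_i}$, i.e.\ the orbit of the \emph{closure}, not $U_i=\bigcup_g g\mathcal C_i$; whether the cores themselves are even constructible is precisely what is at stake. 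The fallback via Theorem \ref{thm:Poissonmain2} also does not work as described: the localized pieces no longer carry the $G$-action, and since a single $G$-orbit may contain infinitely many leaves, peeling off closed Poisson subvarieties never brings you to the finitely-many-leaves situation where \cite[Proposition 3.7(1)]{BrGor03} and Corollary \ref{cor:algebraicDME} would apply.

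For comparison, the paper closes this gap with two concrete ingredients. First, a $G$-equivariant version of Lemma \ref{lem:slc}(iv): for a core $\mathscr C(\mathfrak m)$ one has $\overline{G.\mathscr C(\mathfrak m)}=\{\mathfrak n\in {\rm max}\,A \mid (\mathcal P(\mathfrak m):G)\subseteq\mathfrak n\}$, and since $(\mathcal P(\mathfrak m):G)$ is a Poisson $G$-ideal by Lemma \ref{lem:GPideal}(i), this closure is again a union of $G$-orbits of cores; consequently (reducing to the irreducible case) the union $X$ of the finitely many orbits with dense closure is open. Second, the rank of the Poisson structure: the locus where the rank is strictly below the maximal value $j$ attained on $X$ is a proper closed $G$-stable union of cores (\cite[Proposition 3.6(1)]{BrGor03}), hence disjoint from every dense orbit, so the rank is constant equal to $j$ on $X$; then every core contained in $X$ has $j$-dimensional closure, while $\overline{\mathscr C}\setminus\mathscr C$ is a union of cores whose closures have strictly smaller dimension, giving $\mathscr C=\overline{\mathscr C}\cap X$, and noetherian induction on the closed complement of $X$ finishes. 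Your instinct that finiteness of orbits must be dovetailed with rank semicontinuity is the right one, but without an argument of this kind the proposal does not prove the theorem.
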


The outline of the paper is as follows. In Section \ref{S:sep}, we discuss the topological criterion used in Theorem A and study some of its basic properties in an arbitrary Zariski space. In Section \ref{S:D}, we review some facts of a commutative differential $k$-algebra and state the corresponding Dixmier-Moeglin equivalence. Later in Section \ref{S:M}, we develop our main theorems in the general context of a commutative differential $k$-algebra over a large base field $k$, where we prove Theorem A and Theorem B  within that context. In Section \ref{S:S}, we extend the symplectic core stratification to a commutative differential $k$-algebra and study its relations with the Dixmier-Moeglin equivalence together with a possible rational action of an algebraic group. Finally in Section \ref{S:P}, we apply our previous results to complex affine Poisson algebras and prove Theorem C and Theorem D there.
\centerline{}
\centerline{{\it Throughout the paper, $k$ will denote a field of characteristic zero.}}

\section{$\kappa$-separability for Zariski spaces and posets}\label{S:sep}
A {\it Zariski space} is a topological space $X$ that every open subset is quasicompact and every nonempty closed irreducible subset $Y$ is the closure of a unique point $p$, where $p$ is called the {\it generic point} of $Y$. 
Every every open subset of $X$ is quasicompact is equivalent to that $X$ is a noetherian space, that is, $X$ satisfies the ascending chain condition for open subsets.  For instance, the prime spectrum ${\rm spec}\, R$ of any noetherian ring with identity is a Zariski space.

This following equivalent descriptions of locally closed subsets in a topological space are standard.

\begin{definition-lemma}\label{dl:clopen}
Let $X$ be a topological space. For any subset $S\subseteq X$, the following are equivalent:
\begin{enumerate}
\item $S$ is the intersection of an open and a closed subsets of $X$.
\item Every point $s\in S$ has an open neighborhood $Y$ in $X$ such that $S\cap Y$ is closed in $Y$.
\item $S$ is open in its closure $\overline{S}$ in $X$.
\end{enumerate}
If the above conditions hold, we say that the subset $S$ is {\it locally closed} in $X$.
\end{definition-lemma}

Throughout the paper, we will employ the above equivalent definitions of locally closed subset implicitly.

\begin{definition}
Let $X$ be a Zariski space, and $\kappa$ be a cardinal number. A point $p\in X$ is said to be {\it $\kappa$-separable} if we can write
$$
\overline{\{p\}}\setminus\{p\}~=~\bigcup_{q\in \Lambda\subseteq \overline{\{p\}}\setminus\{p\}}\ \overline{\{q\}},
$$
where the subset $\Lambda$ of $\overline{\{p\}}\setminus\{p\}$ has cardinality strictly less than $\kappa$.
\end{definition}

We can use this separable condition to describe when a point is locally closed in a Zariski space. The following result is probably well-known in algebraic geometry. We provide a proof in order to show that a point is locally closed in a Zariski space is equivalent to the fact that it is $\aleph_0$-separable.

\begin{lemma}\label{lem:Zlocallyclosed}
Let $X$ be a Zariski space. Then a point $p\in X$ is locally closed if and only if it is $\aleph_0$-separable.
\end{lemma}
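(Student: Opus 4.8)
The plan is to prove both implications by unwinding the definition of $\aleph_0$-separability, which simply says that $\overline{\{p\}}\setminus\{p\}$ is a \emph{finite} union of closures of points $\overline{\{q\}}$ with $q\neq p$.

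First I would handle the easy direction: suppose $p$ is $\aleph_0$-separable. Then $\overline{\{p\}}\setminus\{p\} = \overline{\{q_1\}}\cup\cdots\cup\overline{\{q_n\}}$ for finitely many points $q_i\in\overline{\{p\}}\setminus\{p\}$. Each $\overline{\{q_i\}}$ is closed in $X$, hence closed in $\overline{\{p\}}$, so the finite union $\overline{\{p\}}\setminus\{p\}$ is closed in $\overline{\{p\}}$. Therefore $\{p\}$ is open in $\overline{\{p\}}$, which by Definition-Lemma~\ref{dl:clopen}(iii) means $p$ is locally closed. (Here finiteness is used crucially: an arbitrary union of closed sets need not be closed, but a finite one is.)

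For the converse, suppose $p$ is locally closed, so $\{p\}$ is open in $Z:=\overline{\{p\}}$ and hence $C:=Z\setminus\{p\}$ is closed in $Z$, thus closed in $X$ (as $Z$ is closed in $X$). Now I would invoke that $X$ is a Zariski space, in particular a noetherian topological space, so its closed subspace $C$ is also noetherian and therefore has only finitely many irreducible components, say $C = C_1\cup\cdots\cup C_n$. Since $X$ is a Zariski space, each irreducible closed $C_i$ is the closure $\overline{\{q_i\}}$ of its unique generic point $q_i$; and $q_i\in C_i\subseteq C = \overline{\{p\}}\setminus\{p\}$. Hence
$$
\overline{\{p\}}\setminus\{p\} ~=~ \bigcup_{i=1}^{n}\ \overline{\{q_i\}}
$$
with the index set finite, i.e.\ of cardinality strictly less than $\aleph_0$, so $p$ is $\aleph_0$-separable.

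The only genuine input beyond set-theoretic bookkeeping is the fact that a noetherian topological space has finitely many irreducible components — this is the step that converts "closed" into "finite union of closures of points" and is where the Zariski-space hypothesis does its work; everything else is a direct translation through Definition-Lemma~\ref{dl:clopen}. I do not anticipate a real obstacle here, only the need to be careful that the generic points $q_i$ genuinely lie in $\overline{\{p\}}\setminus\{p\}$ rather than merely in its closure, which is immediate since $C$ is already closed.
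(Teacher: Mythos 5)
Your proof is correct and follows essentially the same route as the paper: decompose the closed set $\overline{\{p\}}\setminus\{p\}$ into finitely many irreducible components with generic points (using noetherianity and the Zariski property) and translate via Definition-Lemma~\ref{dl:clopen}(iii). The paper leaves both directions as a brief sketch; you have merely written out the same argument in full, including the easy observation that a finite union of point-closures is closed.
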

\begin{proof}
It is standard and easy to see that any closed subset of a noetherian topological space is a finite union of irreducible closed subsets; see \cite[chap. II \S4.2]{Bo89} and \cite[Exercise 3.17]{Hart77}. Furthermore since $X$ is Zariski, every irreducible closed subset is the closure of some generic point. Then it is easy to deduce the equivalence of the statement from Definition-Lemma \ref{dl:clopen}(iii).
\end{proof}

Our next observation is that whether a point is $\kappa$-separable or not can be detected locally in a Zariski space.

\begin{lemma}\label{lem:localpropertysep}
Let $X$ be a Zariski space, and $\kappa$ be an infinite cardinal number. A point $p\in X$ is $\kappa$-separable in $X$ if and only if  $p$ is $\kappa$-separable in some locally closed subset $Y$ containing $p$, where $Y$ is endowed with the subspace topology.
\end{lemma}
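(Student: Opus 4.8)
The plan is the following. The forward implication is trivial: $X$ itself is a locally closed subset of $X$ containing $p$ (it is both open and closed), so one may take $Y=X$. For the reverse implication, suppose $Y\subseteq X$ is locally closed, $p\in Y$, and $p$ is $\kappa$-separable in $Y$; note that a locally closed subspace of a Zariski space is again a Zariski space, so this hypothesis makes sense. I would first normalize the shape of $Y$: write $Y=U\cap Z$ with $U$ open and $Z$ closed in $X$ (Definition-Lemma~\ref{dl:clopen}), and put $C:=\overline{\{p\}}$, the closure taken in $X$. Since $p\in Z$ and $Z$ is closed, $C\subseteq Z$; hence $C\cap Y=C\cap U$, an open subset of $C$, and for every $q\in C$ we have $\overline{\{q\}}\subseteq C\subseteq Z$ (closures in $X$), so the closure of $\{q\}$ in the subspace $Y$ is $\overline{\{q\}}\cap Y=\overline{\{q\}}\cap U$. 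Therefore the hypothesis that $p$ is $\kappa$-separable in $Y$ unwinds to: there is a subset $\Lambda\subseteq (C\cap U)\setminus\{p\}$ with $|\Lambda|<\kappa$ and
$$(C\cap U)\setminus\{p\} ~=~ \bigcup_{q\in\Lambda}\bigl(\overline{\{q\}}\cap U\bigr).$$

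Next I would account for the part of $C$ lying outside $U$. The set $C\setminus U=C\cap (X\setminus U)$ is closed in $X$ and is properly contained in $C$ since $p\in U$. Because $X$ is a noetherian space, $C\setminus U$ is a finite union of irreducible closed subsets of $X$, and because $X$ is a Zariski space each of these is the closure $\overline{\{q_i\}}$ of its unique generic point $q_i$, with $q_i\in C\setminus U$; write $C\setminus U=\bigcup_{i=1}^{n}\overline{\{q_i\}}$. Now set $\Lambda':=\Lambda\cup\{q_1,\dots,q_n\}$. Then $\Lambda'\subseteq C\setminus\{p\}$, and $|\Lambda'|<\kappa$ because $\kappa$ is infinite and we have adjoined only finitely many points.

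It then remains to check that $C\setminus\{p\}=\bigcup_{q\in\Lambda'}\overline{\{q\}}$, which exhibits $p$ as $\kappa$-separable in $X$. For $\supseteq$: each $q\in\Lambda'$ satisfies $\overline{\{q\}}\subseteq C$, and $p\notin\overline{\{q\}}$, since otherwise $C=\overline{\{p\}}\subseteq\overline{\{q\}}\subseteq C$ would force $\overline{\{q\}}=C$ and hence $q=p$ by uniqueness of generic points, contradicting $q\in C\setminus\{p\}$. For $\subseteq$: given $x\in C\setminus\{p\}$, if $x\in U$ then $x\in (C\cap U)\setminus\{p\}=\bigcup_{q\in\Lambda}(\overline{\{q\}}\cap U)\subseteq\bigcup_{q\in\Lambda}\overline{\{q\}}$, while if $x\notin U$ then $x\in C\setminus U=\bigcup_{i}\overline{\{q_i\}}$; in either case $x\in\bigcup_{q\in\Lambda'}\overline{\{q\}}$.

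I do not expect a serious obstacle here. The steps that need care are the reduction identifying $C\cap Y$ with $C\cap U$ (which is what converts "closure in $Y$" of a point of $C$ into "closure in $X$ intersected with $U$"), the invocation of noetherianity of $X$ to split $C\setminus U$ into finitely many generic-point closures, and the absorption of those finitely many extra generic points into $\Lambda'$; the last of these is harmless precisely because $\kappa$ is assumed infinite, which is where that hypothesis is used.
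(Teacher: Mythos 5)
Your proof is correct, and the substantive (reverse) direction is essentially the paper's own argument: write $Y=U\cap V$ with $U$ open and $V$ closed, observe that for points $q$ of $\overline{\{p\}}$ the closure in $Y$ is $\overline{\{q\}}\cap U$, decompose the closed set $\overline{\{p\}}\setminus U$ into finitely many irreducible closed subsets with generic points (noetherianity plus the Zariski property), and absorb those generic points into $\Lambda$, using that $\kappa$ is infinite. The only divergence is the forward direction: you take $Y=X$, which suffices for the statement as phrased (``some locally closed $Y$''), whereas the paper proves the stronger fact that $\kappa$-separability in $X$ passes to \emph{every} locally closed $Y$ containing $p$ by intersecting the given decomposition with $Y$; that stronger form is the one actually invoked later (e.g.\ in Theorem \ref{thm:stratification}), so it is worth recording, but it is not required by the statement itself. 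Your explicit verification that $p\notin\overline{\{q\}}$ for $q\in\Lambda'$ (via uniqueness of generic points) is a small point the paper leaves implicit, and you handle it correctly.
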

\begin{proof}
First, it is routine to check that any locally closed subset $Y$ in $X$ with the subspace topology is again Zariski. Suppose $p$ is $\kappa$-separable in $X$. Then there is a subset $\Lambda \subseteq \overline{\{p\}}\setminus \{p\}$ of cardinality strictly less than $\kappa$ such that
$$
\overline{\{p\}}\setminus\{p\}~=~\bigcup_{q\in \Lambda}\, \overline{\{q\}}.
$$
Hence for any locally closed subset $Y$ containing $p$ we have
$$
\left(\overline{\{p\}}\cap Y\right)\setminus\{p\}~=~\bigcup_{q\in \Lambda'}\, \left(\overline{\{q\}}\cap Y\right),
$$
where $\Lambda'\subseteq \Lambda$ consists of those $q\in \Lambda$ such that $\overline{\{q\}}\cap Y\neq \emptyset$. It is easy to see that $\overline{\{q\}}\cap Y$ is the closure of the point $q\in Y$. So the above equality shows that $p$ is $\kappa$-separable in $Y$.

Conversely, suppose $p$ is $\kappa$-separable in some locally  closed subset $Y$ containing $p$. So
$$
\left(\overline{\{p\}}\cap Y\right)\setminus\{p\}~=~\bigcup_{q\in \Lambda}\, \left(\overline{\{q\}}\cap Y\right),
$$
for some subset $\Lambda\subseteq (\overline{\{p\}}\cap Y)\setminus\{p\}$ of cardinality strictly less than $\kappa$. By Lemma \ref{dl:clopen}(i), we can write $Y=U\cap V$ with $U$ open and $V$ closed. So we get $\overline{\{q\}}\cap Y=\overline{\{q\}}\cap U$ for any $q\in Y$. Decompose the closed subset $\overline{\{p\}}\setminus U$ into
$$\overline{\{p\}}\setminus U~=~Y_1\cup \dots \cup Y_n~=~\overline{\{q_1\}}\cup\cdots \cup \overline{\{q_n\}},$$
where $q_1,\ldots,q_n$ are generic points of the closed irreducible components $Y_1,\ldots,Y_n$.  Then we have
\begin{align*}
\overline{\{p\}}\setminus\{p\}~&=~\left(\left(\overline{\{p\}}\cap U\right)\setminus\{p\}\right)\bigcup \left(\overline{\{p\}}\setminus U\right)\\
~&=~\left(\bigcup_{q\in \Lambda} \left(\overline{\{q\}}\cap U\right)\right)\bigcup \left(\bigcup_{1\le i\le n} \overline{\{q_i\}}\right)
~=~\bigcup_{q\in \Lambda'} \overline{\{q\}},
\end{align*}
where $\Lambda'=\Lambda\cup\{q_1,\ldots,q_n\}$ has cardinality strictly less than $\kappa$ since $|\Lambda|<\kappa$ and $\kappa$ is infinite.
\end{proof}

It is important to point out that the criterion of $\kappa$-separability for a Zariski space $X$ is uniquely determined by the poset $(X,\le)$, where the partial order $\le$ is defined by the relation $x\le y$ if and only if $y\in \overline{\{x\}}$ for any $x,y\in X$. Recall for an element $s$ in a poset $(S,\le)$, a {\it cover} of $s$ is a minimal element in the subset $\{t\in S\,|\, s\lneqq t\}$. 

\begin{definition}
Let $(S,\le)$ be a poset, and $\kappa$ be a cardinal number. An element $s\in S$ is said to be {\it $\kappa$-separable} if the set of all covers of $s$ has cardinality strictly less than $\kappa$. 
\end{definition}

\begin{lemma}\label{lem:equisep}
Let $X$ be a Zariski space and consider $(X,\le)$ as a poset where $x\le y$ if $y\in \overline{\{x\}}$ for any $x,y\in X$. Suppose $\kappa$ is a cardinal number. For any point $p\in X$, the following are equivalent:
\begin{enumerate}
\item $p$ is $\kappa$-separable in the Zariski space $X$;
\item $p$ is $\kappa$-separable in the poset $(X,\le)$;
\item there exists a subset $\Lambda$ of $\{x\in X\,|\, x\gneqq p\}$ with $|\Lambda|<\kappa$ satisfying for any $y\gneqq p$ there is some $x\in \Lambda$ such that $y\ge x$.
\end{enumerate}
\end{lemma}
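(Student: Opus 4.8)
The plan is to prove the three implications (i) $\Leftrightarrow$ (iii), (ii) $\Leftrightarrow$ (iii), which together give the full equivalence; in fact the cleanest route is to show (iii) is a reformulation of both (i) and (ii). The key translation dictionary is: for $x,y\in X$ we have $x\le y$ iff $y\in\overline{\{x\}}$, hence $\overline{\{p\}}\setminus\{p\}=\{x\in X\mid x\gneqq p\}$ as sets, and for any $q\in\overline{\{p\}}\setminus\{p\}$ the set $\overline{\{q\}}$ equals $\{y\mid y\ge q\}$. With this in hand, the condition defining $\kappa$-separability in the Zariski space, namely $\overline{\{p\}}\setminus\{p\}=\bigcup_{q\in\Lambda}\overline{\{q\}}$ for some $\Lambda\subseteq\overline{\{p\}}\setminus\{p\}$ with $|\Lambda|<\kappa$, says precisely that every $y\gneqq p$ lies in $\overline{\{x\}}$ for some $x\in\Lambda$, i.e.\ $y\ge x$ for some $x\in\Lambda\subseteq\{x\mid x\gneqq p\}$. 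This is verbatim condition (iii), so (i) $\Leftrightarrow$ (iii) is essentially immediate once the dictionary is spelled out (one should note the reverse inclusion $\bigcup_{q\in\Lambda}\overline{\{q\}}\subseteq\overline{\{p\}}$ is automatic since each $q\ge p$ forces $\overline{\{q\}}\subseteq\overline{\{p\}}$).

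For (ii) $\Leftrightarrow$ (iii): condition (ii) asks that the set of covers of $p$ in the poset $(X,\le)$ have cardinality $<\kappa$, where a cover of $p$ is a minimal element of $\{t\in X\mid p\lneqq t\}$. If (ii) holds, take $\Lambda$ to be the set of covers of $p$; then $|\Lambda|<\kappa$, and I claim every $y\gneqq p$ dominates some cover. This is where the noetherian hypothesis on $X$ is used: the poset $(X,\le)$ satisfies the descending chain condition on the up-set $\{t\mid p\le t\}$ — indeed a strictly descending chain $y=y_0\gneqq y_1\gneqq\cdots$ above $p$ would yield a strictly ascending chain of closed sets $\overline{\{y_0\}}\subsetneq\overline{\{y_1\}}\subsetneq\cdots$, contradicting that $X$ is a noetherian space. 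Hence every $y\gneqq p$ lies above a minimal element of $\{t\mid p\lneqq t\}$, i.e.\ above a cover, giving (iii). Conversely, if (iii) holds with witness $\Lambda$, then every cover of $p$ must itself dominate some $x\in\Lambda$; but a cover is minimal in $\{t\mid p\lneqq t\}$ and $x\in\Lambda$ satisfies $p\lneqq x$, so the cover equals $x$. Thus the set of covers is contained in $\Lambda$ and has cardinality $<\kappa$, giving (ii).

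The only genuine subtlety — and the one step I would be careful about — is the DCC argument, since it is the sole place the Zariski (noetherian) hypothesis enters and it is essential: without it a cover need not exist below an arbitrary $y\gneqq p$, and (ii) would be strictly weaker than (iii). I would state this as a short explicit sub-claim. Everything else is a routine unwinding of definitions via the correspondence between points and their closures in a sober noetherian space, and I would present it compactly rather than belaboring each set-theoretic inclusion.
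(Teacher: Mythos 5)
Your reduction of everything to condition (iii) and your dictionary arguments are fine and essentially the same as the paper's: (i)$\Leftrightarrow$(iii) is the same direct translation, and your proof of (iii)$\Rightarrow$(ii) (any cover of $p$ lies in $\Lambda$ by minimality) is the unproblematic half of the paper's (ii)$\Leftrightarrow$(iii). The gap is in (ii)$\Rightarrow$(iii), at exactly the step you flagged, and your justification there is incorrect. A strictly descending chain $y_0\gneqq y_1\gneqq\cdots$ above $p$ corresponds to a strictly \emph{ascending} chain of closed sets $\overline{\{y_0\}}\subsetneq\overline{\{y_1\}}\subsetneq\cdots$, and this does not contradict noetherianity: a noetherian space satisfies the ascending chain condition on \emph{open} sets, equivalently the descending chain condition on closed sets, and infinite strictly ascending chains of closed subsets are perfectly possible (e.g.\ strictly growing finite subsets of the affine line). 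So you have not established that every $y\gneqq p$ dominates a cover of $p$, which is precisely what (ii)$\Rightarrow$(iii) requires.

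This is also not repaired by just invoking the correct chain condition, because the needed property is not a formal consequence of the Zariski axioms. Take $X=\{x_0,x_1,x_2,\dots\}\cup\{\eta\}$ with closed sets $\emptyset$, $F_i=\{x_0,\dots,x_i\}$ for $i\ge 0$, and $X$. This space is noetherian and every nonempty irreducible closed subset has a unique generic point ($x_i$ for $F_i$, and $\eta$ for $X$), so it is a Zariski space; yet $\{t\,|\,t\gneqq\eta\}=\{x_0,x_1,\dots\}$ has no minimal elements, so $\eta$ has no covers, while the up-set of each $x_i$ is the finite set $F_i$, so no $\Lambda$ with fewer than $\aleph_0$ elements can dominate all $x_i$: for $\kappa=\aleph_0$, (ii) holds vacuously at $\eta$ but (iii) and (i) fail. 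Note that the paper's own proof of (ii)$\Leftrightarrow$(iii) treats this point very briefly, asserting that the DCC on closed subsets leaves only finitely many elements between $p$ and any $r\gneqq p$; what actually makes the argument work where the lemma is applied is that there $X$ is (a subspace of) the prime spectrum of a noetherian ring, in which every prime has finite height by Krull's height theorem, so strictly descending chains in $\{t\,|\,t\gneqq p\}$ are finite and every such $t$ does dominate a cover of $p$. To make your proof airtight you should either assume this descending chain condition on points above $p$ as a hypothesis or restrict to that ring-theoretic setting, rather than derive it from topological noetherianity, which only gives the ascending chain condition in the specialization order used here.
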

\begin{proof}
(ii)$\Leftrightarrow$(iii) Since $X$ is Zariski, it satisfies the descending chain condition on closed subsets. By definition, it implies that for any $r\gneqq p$ in the poset $(X,\le)$ there could be only finitely many elements between them. So for any $p$ in the poset $(X,\le)$, having a subset $\Lambda$ in $X$ satisfying the properties: $|\Lambda|<\kappa$ and for any $r\gneqq p$ there is some $q\in \Lambda$ such that $r\ge q$, is the same as the set of all covers of $p$ having cardinality strictly less than $\kappa$. 

(i)$\Leftrightarrow$(iii) Now suppose we can write
$$
\overline{\{p\}}\setminus\{p\}~=~\bigcup_{q\in \Lambda \subseteq \overline{\{p\}}\setminus\{p\}}\, \overline{\{q\}}.
$$
This is equivalent to say that for any $r\in \overline{\{p\}}\setminus\{p\}$ there is some $q\in \Lambda$ such that $r\in \overline{\{q\}}$. By definition in the poset $(X,\le)$, this is equivalent to the fact that for any $r\gneqq p$ there is some $q\in \Lambda$ such that $r\ge q$. So our result follows from direct translation.   
\end{proof}

\begin{remark}\label{rem:ken}
In an arbitrary poset $(S,\le)$ for any element $s\in S$ that is $\kappa$-separable there may not exists a subset $\Lambda$ of $S$ with $|\Lambda|<\kappa$ satisfying for any $r\gneqq s$ there is some $x\in \Lambda$ such that $r\ge x$. For example, consider the interval $[1,\gamma]$ with reversed order where $\gamma$ is the first uncountable ordinal, then $\gamma$ has no covers, it is automatically $\aleph_0$-separable but there does not exist a countable subset $\Lambda$ in $[1,\gamma]$ satisfying for any $a\gneqq \gamma$ there is some $b\in \Lambda$ such that $a\ge b$.
\end{remark}

\section{Commutative differential algebras}\label{S:D}
In this section, we follow \cite{Go06} to discuss Dixmier-Moeglin equivalence in the context of a commutative algebra together with a set of derivations.

A {\it commutative differential $k$-algebra} is a pair $(R,\Delta)$, where $R$ is a commutative $k$-algebra and $\Delta\subseteq {\rm Der}_kR$ is a subset of $k$-linear derivations of $R$. Throughout, let $(R,\Delta)$ be a commutative differential $k$-algebra.

{\it A $\Delta$-ideal} of $R$ is any ideal $I$ of $R$ satisfying $\delta(I)\subseteq I$ for any $\delta\in \Delta$. A prime ideal which is also a $\Delta$-ideal is called a {\it prime $\Delta$-ideal}. There is another notion of {\it $\Delta$-prime ideal} referring to any $\Delta$-ideal that is prime among all $\Delta$-ideals. These two concepts coincide when $R$ is noetherian; see reference \cite[Lemma 1.1(d)]{Go06}. We use $\Delta$-${\rm spec}\, R$ to denote the set of all $\Delta$-prime ideals in $R$, which is a Zariski space with the induced Zariski topology via the obvious embedding $\Delta$-${\rm spec}\, R\subseteq {\rm spec}\, R$. Since derivations are extendable to localizations, any localization of $R$ is still a commutative differential $k$-algebra.

For any ideal $J$ of $R$, the {\it $\Delta$-core} of $J$ is the largest $\Delta$-ideal contained in $J$, denoted by
$$\left(J:\Delta\right)~:=~\left\{r\in R\,|\, \delta_1\cdots \delta_n(r)\in J,\ \forall\ \delta_1,\ldots,\delta_n\in \Delta,\, n\ge 0\right\}.$$
In particular, $\Delta$-cores of maximal ideals of $R$ are called {\it $\Delta$-primitive ideals}. We use $\Delta$-${\rm prim}\, R$ to denote the set of all $\Delta$-primitive ideals in $R$.

The {\it $\Delta$-center} of $R$ is the subalgebra
$$Z_\Delta(R)~:=~\left\{r\in R\, |\,  \delta(r)=0,\ \forall\ \delta\in \Delta\right\}.$$
For any $\Delta$-prime ideal $P$ of $R$, all derivations in $\Delta$ can pass to $R/P$ and then extend uniquely to $\Frac R/P$. We say that $P$ is {\it $\Delta$-rational} if the field $Z_\Delta(\Frac\, R/P)$ is algebraic over the base field $k$.

By analogy with the Dixmier-Moeglin equivalence for algebras, we say that $R$ satisfies the {\it $\Delta$-Dixmier-Moeglin equivalence} provided the following equivalences hold for any $P\in \Delta$-${\rm spec}\, R$
$$P~{\bf locally~closed~in\,\, \Delta}\text{-}{\rm spec}\, R \Longleftrightarrow P~{\bf \Delta\text{-}primitive}\Longleftrightarrow P~{\bf \Delta\text{-}rational}.$$

We recall some basic properties about $\Delta$-${\rm spec}\, R$. In properties below, (i) is Dixmier's result \cite[Lemma 3.3.2]{Dix77e} and (iv) is proved in \cite[Lemma 1.1(e)]{Go06} when $R$ is affine over $k$ where we prove the same result for those $R$ that are noetherian over large base fields.

\begin{lemma}\label{DeltaP}
Let $(R,\Delta)$ be a commutative differential $k$-algebra.
\begin{enumerate}
\item The $\Delta$-core of any prime ideal of $R$ is prime. In particular, every $\Delta$-primitive ideal of $R$ is prime.
\item There is a natural homeomorphism between $\Delta$-${\rm spec}\, RS^{-1}$ for the localization of $R$ at any multiplicative closed set $S$ in $R$ and the subset of $\Delta$-${\rm spec}\, R$ containing those $\Delta$-prime ideals that do not intersect with $S$ equipped with the subspace topology.
\item[(iii)]  If $R$ is noetherian, then the set of all minimal $\Delta$-prime ideals containing any $\Delta$-ideal is finite. In particular, every $\Delta$-semiprime ideal is the intersection of finitely many  $\Delta$-prime ideals.
\item[(iv)] If $R$ is noetherian and $\dim_k R<|k|$, then every  $\Delta$-prime ideal is an intersection of $\Delta$-primitive ideals.
\end{enumerate}
\end{lemma}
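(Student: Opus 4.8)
The plan is to handle the four parts separately, reserving the real work for (iv). Part (i) is \cite[Lemma~3.3.2]{Dix77e}; directly, one uses that in characteristic zero the radical of a $\Delta$-ideal is again a $\Delta$-ideal and every minimal prime over a $\Delta$-ideal is a $\Delta$-ideal, so if $J$ is prime and $P=(J:\Delta)$ then $\sqrt P$ is a $\Delta$-ideal inside $\sqrt J=J$, forcing $\sqrt P=P$ by maximality; picking a minimal prime $Q$ over $P$ with $Q\subseteq J$ (one exists since $J$ contains a minimal prime over $P$), $Q$ is a $\Delta$-ideal inside $J$, hence $Q\subseteq P$ and so $Q=P$, proving $P$ prime; the ``in particular'' is the case $J$ maximal. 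For part (ii) I would note that every $\delta\in\Delta$ extends to $RS^{-1}$ by the quotient rule $\delta(r/s)=(s\delta(r)-r\delta(s))/s^{2}$, so $RS^{-1}$ is again a commutative differential $k$-algebra and extension/contraction of ideals preserves the property of being a $\Delta$-ideal in both directions; the classical homeomorphism ${\rm spec}\,RS^{-1}\isomto\{\fp\in{\rm spec}\,R\mid\fp\cap S=\emptyset\}$ therefore restricts to a homeomorphism of the corresponding $\Delta$-prime spectra with their subspace topologies.

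For part (iii), assume $R$ noetherian and let $I$ be a $\Delta$-ideal; it has only finitely many minimal primes $Q_{1},\dots,Q_{m}$, and in characteristic zero each $Q_{i}$ is a $\Delta$-ideal, hence $\Delta$-prime by \cite[Lemma~1.1(d)]{Go06}. If $Q$ is any minimal $\Delta$-prime over $I$, then $Q\supseteq\sqrt I=Q_{1}\cap\cdots\cap Q_{m}$ gives $Q\supseteq Q_{i}$ for some $i$, and since $Q_{i}$ is itself a $\Delta$-prime over $I$, minimality forces $Q=Q_{i}$; thus the minimal $\Delta$-primes over $I$ lie among $Q_{1},\dots,Q_{m}$, proving finiteness, and a $\Delta$-semiprime ideal, being its own radical, equals $Q_{1}\cap\cdots\cap Q_{m}$, an intersection of finitely many $\Delta$-primes.

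The substance is part (iv). Fix a $\Delta$-prime $P$ and pass to $\bar R:=R/P$, which is noetherian with $\dim_{k}\bar R\le\dim_{k}R<|k|$ and, by \cite[Lemma~1.1(d)]{Go06}, a domain. It suffices to show $\bigcap_{\bar\fm}(\bar\fm:\Delta)=0$ in $\bar R$, the intersection over all maximal ideals $\bar\fm$. The first thing I would establish is that $\bar R$ is a Jacobson ring with residue fields algebraic over $k$: for the residue fields one uses that any field $K\supseteq k$ with $\dim_{k}K<|k|$ is algebraic over $k$ (otherwise a transcendental $t$ makes the $|k|$ elements $(t-c)^{-1}$, $c\in k$, linearly independent over $k$); for the Jacobson property, given a prime $\fq$ of $\bar R$ and $b\notin\fq$, the localization $(\bar R/\fq)[b^{-1}]$ is a nonzero noetherian $k$-algebra of $k$-dimension at most $\aleph_{0}\cdot\dim_{k}\bar R$, still $<|k|$ when $|k|$ is uncountable (and if $|k|$ is countable then $\dim_{k}R$ is finite, so $\bar R$ is Artinian and trivially Jacobson), so any of its maximal ideals pulls back to a prime of $\bar R$ containing $\fq$, avoiding $b$, with residue field a domain integral over $k$, hence a field; thus that prime is maximal. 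Granting this, $J(\bar R)$ equals the nilradical of $\bar R$, which is $0$ since $\bar R$ is a domain. Finally, for $0\ne a\in\bar R$ the $\Delta$-ideal it generates — exactly the ordinary ideal generated by all iterated derivatives $\delta_{1}\cdots\delta_{n}(a)$ — is nonzero, hence not contained in $J(\bar R)=0$, so some maximal $\bar\fm$ fails to contain it, which is precisely $a\notin(\bar\fm:\Delta)$; as $a$ was arbitrary, $\bigcap_{\bar\fm}(\bar\fm:\Delta)=0$, i.e. $P$ is the intersection of the $\Delta$-primitive ideals containing it.

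I expect the main obstacle to be exactly the Nullstellensatz-type input in (iv): checking that localizing $\bar R$ at a single element keeps its $k$-dimension below $|k|$, and that this bound forces residue fields to be algebraic over $k$. This is the only place the cardinality hypothesis $\dim_{k}R<|k|$ enters, and it is what replaces the appeal to affineness (i.e. to the Jacobson property of affine $k$-algebras) in Goodearl's proof of the affine case.
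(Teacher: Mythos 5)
Your argument is correct and, in substance, follows the paper's route. Parts (i)--(iii) are handled essentially as in the paper (for (iii) the paper deduces that primes minimal over a $\Delta$-ideal are $\Delta$-stable from part (i) itself, whereas you invoke the standard characteristic-zero fact directly --- an equivalent step), and your part (iv) rests on exactly the two ingredients the paper uses: the Jacobson property together with algebraicity of residue fields over $k$ under the bound $\dim_k R<|k|$, and the compatibility of forming $\Delta$-cores with intersecting over maximal ideals. The one genuine difference is that the paper gets the Jacobson/Nullstellensatz input by citing \cite[Proposition II.7.12]{BrGo} and then concludes in one line from $J=(J:\Delta)=\bigl(\bigcap_{J\subseteq \fm\in{\rm max}\,R}\fm:\Delta\bigr)=\bigcap_{J\subseteq \fm\in{\rm max}\,R}(\fm:\Delta)$, while you re-prove that input from scratch (the $(t-c)^{-1}$ independence trick plus localization at a single element) and organize the conclusion through $R/P$ and the smallest $\Delta$-ideal generated by a nonzero element; your version is longer but self-contained, the paper's is shorter but leans on the cited proposition.
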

\begin{proof}
(i) \cite[Lemma 3.3.2]{Dix77e}.

(ii) It follows from the well-known one-to-one correspondence for prime ideals in localizations.

(iii) One can see from (i) that every prime ideal minimal over a $\Delta$-ideal is a $\Delta$-prime ideal of $R$. So it follows from Noether's theorem.

(iv) By \cite[Proposition II.7.12]{BrGo}, we know $R$ is a Jacobson ring. So every prime ideal of $R$ is the intersection of maximal ideals containing it. Then for any $\Delta$-prime ideal $J$ of $R$ we have
\begin{align*}
J~&=~\left(J:\Delta\right)~=~\bigcap_{J\subseteq P\in {\rm max}\,R} P~=~\left(\left(\bigcap_{J\subseteq P\in {\rm max}\,R} P\right):\Delta\right)\\
~&=~\bigcap_{J\subseteq P\in {\rm max}\,R} \left(P:\Delta\right)~=~\bigcap_{J\subseteq Q\in \Delta\text{-}{\rm prim}\,R} Q.
\end{align*}
\end{proof}

\begin{remark}
As a consequence of Lemma \ref{DeltaP}(i), we see that
$$\Delta\text{-}{\rm prim}\, R~\subseteq~\Delta\text{-}{\rm spec}\, R~\subseteq~{\rm spec}\, R,$$
which can be considered as inclusions of Zariski spaces.
\end{remark}

The following result is well-known for the prime spectrum of any associative algebra, e.g., \cite[Lemma II.7.11]{BrGo}, we adapt the version for $\Delta$-prime spectrums here.

\begin{lemma}\label{lem:equilocallyclosed}
Let $(R,\Delta)$ be a commutative differential $k$-algebra. For any $\Delta$-prime ideal $P$ of $R$, the following are equivalent:
\begin{enumerate}
\item $P$ is locally closed in $\Delta$-${\rm spec}\, R$;
\item There exists $f\neq 0$ in the quotient algebra $R/P$ such that the localization $(R/P)_f$ has no proper nontrivial $\Delta$-prime ideals;
\item The intersection of all $\Delta$-prime ideals properly containing $P$ is a $\Delta$-ideal properly containing $P$;
\item $P$ is $\aleph_0$-separable in $\Delta$-${\rm spec}\, R$ assuming that $R$ is noetherian.
\end{enumerate}
\end{lemma}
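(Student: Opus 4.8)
The plan is to prove the cycle $(i)\Rightarrow(ii)\Rightarrow(iii)\Rightarrow(i)$ using only the topology on $\Delta\text{-}{\rm spec}\,R$ together with the localization correspondence of Lemma \ref{DeltaP}(ii), and then to deduce $(i)\Leftrightarrow(iv)$ from Lemma \ref{lem:Zlocallyclosed}. First I would record the basic dictionary to be used throughout. The closed subsets of $\Delta\text{-}{\rm spec}\,R$ are exactly the sets $V_\Delta(I):=\{Q\in\Delta\text{-}{\rm spec}\,R:Q\supseteq I\}$ with $I$ a $\Delta$-ideal of $R$ (an arbitrary defining ideal may be replaced by the smallest $\Delta$-ideal containing it without changing $V_\Delta$), so in particular $\overline{\{P\}}=V_\Delta(P)$. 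From the standing conventions, $\Delta$-prime ideals are prime, so $R/P$ is a domain; derivations descend to $R/P$ and extend to localizations; and combining Lemma \ref{DeltaP}(ii) with the correspondence of $\Delta$-ideals under $R\to R/P$, for $\bar f\neq 0$ in $R/P$ the $\Delta$-primes of $(R/P)_{\bar f}$ correspond, via a homeomorphism onto the relevant subspace, to the $\Delta$-primes $Q$ of $R$ with $P\subseteq Q$ and $f\notin Q$.

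For $(i)\Rightarrow(ii)$: local closedness of $P$ means $\{P\}$ is open in $\overline{\{P\}}=V_\Delta(P)$, hence $V_\Delta(P)\setminus\{P\}$ is closed in $\Delta\text{-}{\rm spec}\,R$, say $V_\Delta(P)\setminus\{P\}=V_\Delta(I)$ for a $\Delta$-ideal $I$; since $P\notin V_\Delta(I)$ we have $I\not\subseteq P$, so I can pick $f\in I\setminus P$ with nonzero image $\bar f$ in $R/P$. By the dictionary, a $\Delta$-prime $Q$ of $R$ with $P\subseteq Q$, $f\notin Q$ and $Q\neq P$ would lie in $V_\Delta(P)\setminus\{P\}=V_\Delta(I)$, hence contain $f\in I$ --- absurd; so $0$ is the only $\Delta$-prime of $(R/P)_{\bar f}$, which is $(ii)$. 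For $(ii)\Rightarrow(iii)$: lift $\bar f$ to $f\in R\setminus P$; were some $\Delta$-prime $Q\supsetneq P$ to omit $f$, it would produce a nonzero $\Delta$-prime of $(R/P)_{\bar f}$, contradicting $(ii)$; hence $f$ lies in $I:=\bigcap\{Q\in\Delta\text{-}{\rm spec}\,R:Q\supsetneq P\}$, which is a $\Delta$-ideal as an intersection of $\Delta$-ideals and contains $f\notin P$, so $I\supsetneq P$ (the degenerate case with no such $Q$, where $I=R$, causes no trouble).

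For $(iii)\Rightarrow(i)$: with $I:=\bigcap\{Q\in\Delta\text{-}{\rm spec}\,R:Q\supsetneq P\}$ a $\Delta$-ideal satisfying $I\supsetneq P$, I claim $V_\Delta(P)\setminus\{P\}=V_\Delta(I)$: ``$\subseteq$'' because every $Q\supsetneq P$ contains $I$, and ``$\supseteq$'' because $Q\supseteq I\supsetneq P$ forces $Q\supsetneq P$; thus $\{P\}$ is open in $\overline{\{P\}}$ and $P$ is locally closed. For $(i)\Leftrightarrow(iv)$, assuming $R$ noetherian, I would first verify that $\Delta\text{-}{\rm spec}\,R$ is a Zariski space: it is a noetherian space, being a subspace of the noetherian space ${\rm spec}\,R$, and any nonempty closed irreducible subset $Z$ has the unique generic point $\mathfrak p:=\bigcap_{Q\in Z}Q$ --- indeed $\mathfrak p$ is a $\Delta$-ideal, the closure of $Z$ in ${\rm spec}\,R$ equals $V(\mathfrak p)$ and is irreducible so $\mathfrak p$ is prime, whence $\mathfrak p$ is a prime $\Delta$-ideal and therefore $\Delta$-prime, with $Z=V(\mathfrak p)\cap\Delta\text{-}{\rm spec}\,R=\overline{\{\mathfrak p\}}$; uniqueness follows because two generic points of $Z$ lie in each other's closure and hence are comparable, so equal. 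Then $(i)\Leftrightarrow(iv)$ is precisely Lemma \ref{lem:Zlocallyclosed}.

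The bulk of this is routine bookkeeping; the one step I expect to require genuine care is the dictionary invoked throughout $(i)\Rightarrow(ii)$ and $(ii)\Rightarrow(iii)$ --- that the $\Delta$-prime spectrum of $(R/P)_{\bar f}$ is homeomorphic to the subspace $\{Q\in\Delta\text{-}{\rm spec}\,R:P\subseteq Q,\ f\notin Q\}$ --- which rests on Lemma \ref{DeltaP}(ii), the quotient correspondence for $\Delta$-ideals, and the characteristic-zero fact that $\Delta$-primes are prime. Checking that $\Delta\text{-}{\rm spec}\,R$ is a Zariski space in the noetherian case, needed only for $(iv)$, is the other spot calling for a short argument.
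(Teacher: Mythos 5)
Your argument is correct, and for (i)$\Rightarrow$(ii)$\Rightarrow$(iii) it is essentially the paper's proof: pick $f$ in a defining $\Delta$-ideal of $\overline{\{P\}}\setminus\{P\}$ away from $P$, and translate through the localization/quotient dictionary supplied by Lemma \ref{DeltaP}(ii). Where you differ is in closing the loop. The paper runs the cycle (iii)$\Rightarrow$(iv)$\Rightarrow$(i): from (iii) it invokes Lemma \ref{DeltaP}(iii) (finitely many minimal $\Delta$-primes over the $\Delta$-ideal $I$, using noetherianness) together with Lemma \ref{lem:equisep} to get $\aleph_0$-separability, and then gets (iv)$\Rightarrow$(i) from Lemma \ref{lem:Zlocallyclosed}. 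You instead prove (iii)$\Rightarrow$(i) directly, by observing that $\overline{\{P\}}\setminus\{P\}=V_\Delta(I)$ when $I$ is the intersection in (iii), and then obtain both directions of (i)$\Leftrightarrow$(iv) from Lemma \ref{lem:Zlocallyclosed} after explicitly checking that $\Delta$-${\rm spec}\,R$ is a Zariski space in the noetherian case (generic point of an irreducible closed set $Z$ is $\bigcap_{Q\in Z}Q$, which is a radical, hence prime, $\Delta$-ideal). This buys two small things: your route establishes the equivalence of (i), (ii), (iii) without any noetherian hypothesis, which matches the placement of that hypothesis only in item (iv), and it makes explicit the Zariski-space verification that the paper merely asserts when introducing $\Delta$-${\rm spec}\,R$; the paper's route, by contrast, passes through the finiteness of minimal $\Delta$-primes, which is exactly the content it wants to highlight for (iv). One caveat: your dictionary leans on the statement that $\Delta$-prime ideals are prime (so that $R/P$ is a domain and extensions of $\Delta$-primes $Q\supsetneq P$ avoiding $f$ stay proper and nonzero in $(R/P)_{\bar f}$); the paper cites this only in the noetherian case (\cite[Lemma 1.1(d)]{Go06}) but tacitly uses it in the same way via the embedding $\Delta$-${\rm spec}\,R\subseteq{\rm spec}\,R$, so you are consistent with the paper's conventions, though it would be cleaner to flag that this is where primeness of $\Delta$-primes enters.
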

\begin{proof}
(i)$\Rightarrow$(ii) By Definition-Lemma \ref{dl:clopen}(iii), we can write
$$\{P\}=\{Q\in \Delta\text{-}{\rm spec}\, R\,|\, Q \supseteq P\}\setminus \{Q\in \Delta\text{-}{\rm spec}\, R\,|\, Q\supseteq I\}$$
for some $\Delta$-ideal $I$ strictly containing $P$. Pick any $f\in I\setminus P$ and still write $f$ as its image in $I/P$. Since every $\Delta$-prime ideal $Q$ strictly containing $P$ must contain $I$, we have $f\in I\subseteq Q$. So by Lemma \ref{DeltaP}(ii), $(R/P)_f$ has no proper nontrivial $\Delta$-prime ideals.

(ii)$\Rightarrow$(iii) Choose any preimage of $f$ in $R$, which we still denote by $f$ such that $f\in R\setminus P$. Again by Lemma \ref{DeltaP}(ii), every $\Delta$-prime ideal strictly containing $P$ has to contain $f$. Thus $f$ is contained in the intersection of all $\Delta$-prime ideals properly containing $P$, which then is a $\Delta$-ideal properly containing $P$.

(iii)$\Rightarrow$(iv) Let $I$ be the intersection of all $\Delta$-prime ideals properly containing $P$, which is a $\Delta$-ideal properly containing $P$. Since $R$ is noetherian, by Lemma \ref{DeltaP}(iii) the set of all minimal $\Delta$-prime ideals containing $I$ is finite. So $P$ is $\aleph_0$-separable in $\Delta$-${\rm spec}\, R$ by Lemma \ref{lem:equisep}.

(iv)$\Rightarrow$(i) It is Lemma \ref{lem:Zlocallyclosed}.
\end{proof}

The next result was first observed in the proof of \cite[Proposition 1.2]{Go06} for $\Delta$-primitive ideals. The proof given there extends verbatim to general $\Delta$-prime ideals.

\begin{lemma}\label{lem:ZEmbed}
Let $(R,\Delta)$ be a commutative differential $k$-algebra. For any prime ideal $P$ of $R$, there exists a field extension
$$\phi: Z_\Delta\left(\Frac\, R/(P:\Delta)\right)\longrightarrow \Frac R/P$$
over the base field $k$.
\end{lemma}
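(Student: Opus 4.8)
The plan is to descend to the $\Delta$-domain obtained by killing the $\Delta$-core of $P$, and then transport elements of its $\Delta$-center across the canonical surjection. Write $Q := (P:\Delta)$, a prime $\Delta$-ideal by Lemma~\ref{DeltaP}(i) with $Q\subseteq P$, and set $T := R/Q$; since $Q$ is a $\Delta$-ideal the derivations in $\Delta$ act on $T$ and extend uniquely to $\Frac T$, so $Z_\Delta(\Frac T)=Z_\Delta\!\left(\Frac R/(P:\Delta)\right)$ makes sense and is a field. Put $\mathfrak p := P/Q$, a prime ideal of $T$ with $T/\mathfrak p\cong R/P$, so $\Frac(T/\mathfrak p)\cong\Frac R/P$. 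The structural fact I would record first is that, by maximality of $Q$ among $\Delta$-ideals contained in $P$, the $\Delta$-core of $\mathfrak p$ in $T$ is zero; equivalently, $\mathfrak p$ contains no nonzero $\Delta$-ideal of $T$. It then suffices to produce a $k$-algebra homomorphism $\phi\colon Z_\Delta(\Frac T)\to\Frac(T/\mathfrak p)$, since any ring homomorphism out of a field is automatically injective.

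To build $\phi$, for each $z\in Z_\Delta(\Frac T)$ I would consider the denominator ideal $I_z := \{\,t\in T \mid tz\in T\,\}$, which is a nonzero ideal of $T$ (it contains any denominator of $z$). The one point where the hypothesis $\delta(z)=0$ for all $\delta\in\Delta$ genuinely enters is the claim that $I_z$ is a $\Delta$-ideal: if $tz\in T$ and $\delta\in\Delta$, then $\delta(t)z=\delta(tz)-t\delta(z)=\delta(tz)\in T$, so $\delta(t)\in I_z$. Since $\mathfrak p$ contains no nonzero $\Delta$-ideal, $I_z\not\subseteq\mathfrak p$, so I may choose $t\in I_z\setminus\mathfrak p$; writing $\overline{(\,\cdot\,)}$ for reduction modulo $\mathfrak p$ and noting $\bar t\neq 0$ because $\mathfrak p$ is prime, I would set
$$\phi(z)~:=~\frac{\overline{tz}}{\,\bar t\,}~\in~\Frac(T/\mathfrak p).$$

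The rest is routine verification, which I would carry out in this order: (1) independence of the choice of $t$, from the identity $t'(tz)=t(t'z)$ in $T$, giving $\overline{t'}\cdot\overline{tz}=\bar t\cdot\overline{t'z}$; (2) that $\phi$ is additive and multiplicative, by clearing denominators simultaneously — given $z_1,z_2$ with witnesses $t_1,t_2$, the product $t_1t_2$ lies in $I_{z_1}\cap I_{z_2}\cap I_{z_1+z_2}\cap I_{z_1z_2}$ and, as $\mathfrak p$ is prime, $t_1t_2\notin\mathfrak p$, after which $(t_1z_1)(t_2z_2)=t_1t_2z_1z_2$ in $T$ gives the two homomorphism identities; (3) that $\phi(1)=1$ and $\phi$ fixes $k$ pointwise, by taking $t=1$ and using $k\subseteq Z_\Delta(\Frac T)$ (each $\delta\in\Delta$ is $k$-linear). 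This exhibits $\phi$ as an embedding of fields over $k$, as claimed. I expect the only substantive steps to be the identification of $I_z$ as a $\Delta$-ideal and the reduction ensuring $\mathfrak p$ swallows no nonzero $\Delta$-ideal; everything else is bookkeeping with fractions.
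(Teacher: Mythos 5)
Your argument is correct and is essentially the same as the one the paper relies on: the paper simply cites Goodearl's proof of \cite[Proposition 1.2]{Go06}, which is exactly this denominator-ideal argument (the set $I_z=\{t\in T\mid tz\in T\}$ is a nonzero $\Delta$-ideal, hence not contained in $\mathfrak p$ because the $\Delta$-core of $\mathfrak p$ in $T=R/(P:\Delta)$ is zero, and one maps $z\mapsto \overline{tz}\,\bar t^{\,-1}$), extended verbatim from maximal to arbitrary prime $P$. Your write-up fills in the routine verifications correctly, so nothing further is needed.
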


The most difficult case in $\Delta$-Dixmier-Moeglin equivalence is to show that every $\Delta$-rational ideal of $R$ is locally closed in $\Delta\text{-}{\rm spec}\, R$ due to the following result, which was first given in the complex affine Poisson case by Oh \cite[Propositions 1.7, 1.1]{ Oh99} and later proved for any commutative affine differential $k$-algebra by Goodearl \cite[Proposition 1.2]{Go06}. We show the same result holds for any commutative noetherian differential $k$-algebra over large base field.

\begin{proposition}\label{weakPDME}
Let $(R,\Delta)$ be a commutative differential $k$-algebra. Suppose $R$ is noetherian and $\dim_k R<|k|$. Then we have the following implications for any $P\in \Delta$-${\rm spec}\, R$:
$$P~\text{locally~closed~in}\,\, \Delta\text{-}{\rm spec}\, R\Longrightarrow P~{\Delta\text{-}primitive}\Longrightarrow P~{\Delta\text{-}rational}.$$
\end{proposition}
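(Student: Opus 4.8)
The plan is to handle the two implications separately, each reducing to a single tool already established above.

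\textbf{Locally closed $\Rightarrow$ $\Delta$-primitive.} Suppose $P$ is locally closed in $\Delta$-${\rm spec}\, R$. I would first invoke Lemma~\ref{lem:equilocallyclosed} to trade this topological hypothesis for the concrete one: there is an element $f\in R\setminus P$ such that $(R/P)_f$ has no proper nontrivial $\Delta$-prime ideals. Passing to the quotient $R/P$ (again a commutative differential $k$-algebra) and then localizing at $f$, the homeomorphism of Lemma~\ref{DeltaP}(ii) together with the analogous elementary fact for quotients rephrases this as: the only $\Delta$-prime ideal $Q$ of $R$ with $Q\supseteq P$ and $f\notin Q$ is $P$ itself. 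The cardinality hypothesis now enters solely to guarantee that $R$ is a Jacobson ring --- this is \cite[Proposition~II.7.12]{BrGo}, the same input used in the proof of Lemma~\ref{DeltaP}(iv) --- so that $P$ equals the intersection of the maximal ideals of $R$ containing it. Since $f\notin P$, at least one such maximal ideal $\mathfrak{m}$ avoids $f$. Then $(\mathfrak{m}:\Delta)$ is prime by Lemma~\ref{DeltaP}(i), it is a $\Delta$-ideal by construction, hence a $\Delta$-prime ideal because $R$ is noetherian, and $P\subseteq(\mathfrak{m}:\Delta)\subseteq\mathfrak{m}$ forces $f\notin(\mathfrak{m}:\Delta)$. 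By the rephrased condition, $(\mathfrak{m}:\Delta)=P$, so $P$ is $\Delta$-primitive.

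\textbf{$\Delta$-primitive $\Rightarrow$ $\Delta$-rational.} Write $P=(\mathfrak{m}:\Delta)$ for some maximal ideal $\mathfrak{m}$ of $R$. Applying Lemma~\ref{lem:ZEmbed} to the prime ideal $\mathfrak{m}$ yields a field extension $Z_\Delta(\Frac R/P)\hookrightarrow\Frac R/\mathfrak{m}$ over $k$, and $\Frac R/\mathfrak{m}=R/\mathfrak{m}$ since $\mathfrak{m}$ is maximal. Because $\dim_k(R/\mathfrak{m})\le\dim_k R<|k|$, the field $R/\mathfrak{m}$ must be algebraic over $k$: a transcendental element $t$ would force the $k$-linearly independent family $\{\,(t-\lambda)^{-1}:\lambda\in k\,\}$ into $R/\mathfrak{m}$, violating the dimension bound. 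Hence the subfield $Z_\Delta(\Frac R/P)$ is algebraic over $k$, i.e.\ $P$ is $\Delta$-rational.

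Once Lemmas~\ref{lem:equilocallyclosed}, \ref{DeltaP} and \ref{lem:ZEmbed} are in hand the argument is essentially an assembly; the only non-formal inputs are the Jacobson property of $R$ --- the sole place $\dim_k R<|k|$ is used in the first implication, and in this non-affine setting it is imported from \cite{BrGo} rather than obtained from Hilbert's Nullstellensatz --- and the elementary ``cardinality Nullstellensatz'' used in the second. I expect the most delicate (though routine) point to be the bookkeeping of the successive identifications --- quotient by $P$, localization at $f$, and Lemma~\ref{DeltaP}(ii) --- so that the condition extracted from Lemma~\ref{lem:equilocallyclosed} is matched verbatim by the ideal $(\mathfrak{m}:\Delta)$ produced from the Jacobson property.
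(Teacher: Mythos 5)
Your proposal is correct, and it is essentially the paper's argument with two local repackagings. For the first implication the paper combines Lemma \ref{DeltaP}(iv) (every $\Delta$-prime ideal is an intersection of $\Delta$-primitive ideals, itself proved from the Jacobson property of $R$) with condition (iii) of Lemma \ref{lem:equilocallyclosed}: if no $\Delta$-primitive ideal in that intersection equalled $P$, the intersection would properly contain $P$, a contradiction. You instead start from condition (ii), use the Jacobson property directly to produce one maximal ideal $\mathfrak m\supseteq P$ avoiding $f$, and conclude $(\mathfrak m:\Delta)=P$ via Lemma \ref{DeltaP}(i); this is the same mechanism (Jacobson plus primeness of $\Delta$-cores), applied to a single maximal ideal rather than to the full intersection, and it spares you the citation of (iv). For the second implication the paper quotes the Nullstellensatz over large base fields from \cite{BrGo} to get that $R/\mathfrak m$ is algebraic over $k$, whereas you reprove that fact with the standard linear independence of $\{(t-\lambda)^{-1}\,:\,\lambda\in k\}$ for $t$ transcendental; both are valid, and your version has the minor virtue of making the use of $\dim_k R<|k|$ in that step self-contained. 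One small quibble: noetherianity is not needed to see that the prime $\Delta$-ideal $(\mathfrak m:\Delta)$ is $\Delta$-prime (any prime $\Delta$-ideal is automatically prime among $\Delta$-ideals; the noetherian hypothesis is only relevant for the converse identification), so your parenthetical appeal to it is harmless but unnecessary.
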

\begin{proof}
Suppose $P$ is locally closed in $\Delta$-${\rm spec}\, R$. By Lemma \ref{DeltaP}(iv), we have $P=\bigcap_{P\subseteq P_i\in \Delta\text{-}{\rm prim}\,R} P_i$. Now we must have some $P_i=P$ since Lemma \ref{lem:equilocallyclosed}(iii) implies that
$$P~\subsetneq~\bigcap_{P\subsetneq P_i\in \Delta\text{-}{\rm spec}\,R} P_i~\subseteq~\bigcap_{P\subsetneq P_i\in \Delta\text{-}{\rm prim}\,R} P_i.$$
This shows that $P$ is $\Delta$-primitive.

Next let $P$ be $\Delta$-primitive and we can write $P=(M:\Delta)$ for some $M\in {\rm max}\, R$. By Lemma \ref{lem:ZEmbed}, we have a field extension $Z_\Delta(\Frac R/P)\to \Frac(R/M)$ over $k$.  Since $\dim_k R<|k|$, we have that $R$ satisfies the Nullstellensatz \cite[Proposition II.7.16]{BrGo}. So $Z_\Delta(\Frac R/P)\subseteq \Frac(R/M)$ is algebraic over $k$ by Definition \cite[Definition II.7.14]{BrGo}. So $P$ is $\Delta$-rational and it completes the proof.
\end{proof}

\section{Main results}\label{S:M}
We need some more lemmas before we can tackle our main theorems. The next lemma is originated from the well-known results in ring theory concerning ideals in rings obtained by extending scalars in centrally closed algebras, e.g., \cite[Lemma 2.1]{BWY19}. We establish the result in the setting of any commutative differential $k$-algebra by following the proof of the latter result.

\begin{lemma}\label{lem1}
Let $(R,\Delta)$ be a commutative differential $k$-algebra. Suppose $K/k$ is any field extension and both $R$ and $(\Frac R)\otimes_kK$ are integral domains. If the zero ideal of $R$ is $\Delta$-rational, then every nonzero $\Delta$-ideal of $R\otimes_kK$ contains an element of the form $r\otimes 1$  for some $0 \neq r \in R$.
\end{lemma}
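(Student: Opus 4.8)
The plan is to follow the scheme of \cite[Lemma 2.1]{BWY19}, transporting the ``extension of scalars over the (extended) centre'' argument to the differential setting. Write $F=\Frac R$; each $\delta\in\Delta$ extends uniquely to a derivation of $F$ and acts on $R\otimes_k K$ as $\delta\otimes\id_K$, and these extended derivations send any $\Delta$-ideal of $R\otimes_k K$ into itself. Fix a nonzero $\Delta$-ideal $I\subseteq R\otimes_k K$ and choose $0\neq x\in I$ admitting a representation $x=\sum_{i=1}^n a_i\otimes b_i$ with $a_i\in R\setminus\{0\}$, $b_i\in K\setminus\{0\}$, and $n$ as small as possible. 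A short standard argument shows minimality forces both $\{a_1,\dots,a_n\}$ and $\{b_1,\dots,b_n\}$ to be $k$-linearly independent (a linear relation on either side produces a strictly shorter nonzero representation). After multiplying $x$ by $1\otimes b_1^{-1}\in R\otimes_k K$ we may and do assume $b_1=1$, the $b_i$ still being $k$-linearly independent.

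Next I would run the logarithmic-derivative trick. For $\delta\in\Delta$, the element
$$(a_1\otimes 1)\,\delta(x)-(\delta(a_1)\otimes 1)\,x=\sum_{i=2}^{n}\bigl(a_1\delta(a_i)-\delta(a_1)a_i\bigr)\otimes b_i$$
belongs to $I$ and has at most $n-1$ terms, so by minimality of $n$ it vanishes; since the $b_i$ with $i\geq 2$ are $k$-linearly independent, $a_1\delta(a_i)=\delta(a_1)a_i$ in $R$ for all $i\geq 2$ and all $\delta\in\Delta$. Passing to $F$ and applying the quotient rule, $\delta(a_i/a_1)=0$, i.e. $t_i:=a_i/a_1\in Z_\Delta(F)$ for $i=2,\dots,n$. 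By hypothesis the zero ideal of $R$ is $\Delta$-rational, so $Z_\Delta(F)$ is algebraic over $k$, and hence so is each $t_i$.

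Finally I would descend back to $R\otimes_k K$. Put $k'=k(t_2,\dots,t_n)\subseteq F$, a finite field extension of $k$. Since $-\otimes_k K$ is exact, $k'\otimes_k K$ embeds into the domain $F\otimes_k K$, so it is a domain; being of finite dimension $[k':k]$ over the field $K$, it is itself a field. In $F\otimes_k K$ one has $x=(a_1\otimes 1)\,u$ with $u:=1\otimes 1+\sum_{i=2}^n t_i\otimes b_i\in k'\otimes_k K$; since $x\neq 0$ and $a_1\neq 0$, $u\neq 0$, so $u$ is invertible in the field $k'\otimes_k K$, say $v=u^{-1}$. Now choose $d\in R\setminus\{0\}$ with $dk'\subseteq R$ (clear the denominators of a finite $k$-spanning set of $k'$ inside $F$). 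Then $(d\otimes 1)v\in R\otimes_k K$, hence $\bigl((d\otimes 1)v\bigr)\cdot x\in I$; but this product equals $(d\otimes 1)(a_1\otimes 1)(vu)=da_1\otimes 1$ with $da_1\neq 0$, which is the desired element $r\otimes 1\in I$ with $r=da_1\in R\setminus\{0\}$.

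The genuinely delicate step is the last one: one must recognise that $k'\otimes_k K$ is a \emph{field} — which is exactly where both hypotheses are spent, the domain hypothesis on $F\otimes_k K$ and the algebraicity of $Z_\Delta(F)$ over $k$ (so that $[k':k]<\infty$) — and then carefully check that, although the inverse $v$ a priori lives only in $F\otimes_k K$, multiplying by a single $d\otimes 1$ pulls both $v$ and the whole product back into $R\otimes_k K$ while the identity $\bigl((d\otimes 1)v\bigr)x=da_1\otimes 1$ persists. I would also note that noetherianness of $R$ is not needed anywhere in this argument.
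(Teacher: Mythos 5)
Your proof is correct, and its first half is exactly the paper's argument: pick a nonzero element of the $\Delta$-ideal with a representation of minimal length, apply the logarithmic-derivative trick $(a_1\otimes 1)\delta(x)-(\delta(a_1)\otimes 1)x$, use minimality plus $k$-linear independence of the $b_i$ to force $a_1\delta(a_i)=\delta(a_1)a_i$, and invoke $\Delta$-rationality of $(0)$ to conclude the ratios $t_i=a_i/a_1$ lie in $Z_\Delta(\Frac R)$ and are algebraic over $k$. Where you diverge is the endgame. The paper writes $x=(r_1\otimes 1)y$ with $y\in F\otimes_k K$ for $F=k(z_1,\dots,z_d)$ finite over $k$, takes a polynomial relation $y^m(1\otimes c_m)+\cdots+(1\otimes c_0)=0$ over $K$ whose constant term is nonzero (this is where the hypothesis that $(\Frac R)\otimes_k K$ is a domain is spent), normalizes $c_0=1$, and expands to exhibit $r_1^m\otimes 1=-\sum_{j\ge 1}(r_1\otimes 1)^{m-j}x^j(1\otimes c_j)\in I$; no inverses and no denominator-clearing element $d$ are needed, since everything stays inside the subring generated by $r_1\otimes 1$, $x$ and $1\otimes K$. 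You instead note that $k'\otimes_k K$ is a domain (same use of the hypothesis) of finite dimension over $K$, hence a field, invert $u$ there, and clear denominators with a single $d\in R$ to land $da_1\otimes 1\in I$; this is a clean and slightly more conceptual finish, at the mild cost of the extra normalization $b_1=1$ and the auxiliary $d$, and your verification that the identity $((d\otimes 1)v)x=da_1\otimes 1$ descends from $(\Frac R)\otimes_k K$ to $R\otimes_k K$ is the right point to be careful about (injectivity of $R\otimes_k K\to(\Frac R)\otimes_k K$ by flatness). Your closing remark is also accurate for the paper's proof: noetherianness of $R$ plays no role in this lemma.
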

\begin{proof}
Let $I$ be a nonzero $\Delta$-ideal of $R\otimes_k K$ and choose a nonzero element $x\in I$ such that $x=\sum_{i=1}^d r_i\otimes \lambda_i$ with $r_i\in R$, $\lambda_i\in K$ and $d$ minimal.  We claim that $d=1$.  If not, suppose that $d>1$. Clearly all $r_1,\ldots,r_d$ are nonzero in $R$. So in $(\Frac R)\otimes_k K$, we may write $x=(r_1\otimes 1)y$, where $y=\sum_{i=1}^d z_i\otimes \lambda_i$ with $z_1=r_1/r_1,\ldots ,z_d=r_d/r_1\in \Frac R$. Set $F:=k(z_1,\dots,z_d)$ to be the extension of $k$ in $\Frac R$ generated by $z_1,\ldots ,z_d$. We claim that $F$ is algebraic over $k$. We note, for any $\delta\in \Delta$ which can be extended to a $K$-linear derivation of $R\otimes_kK$, the following element
$$
\delta(x)(r_1\otimes 1)-x(\delta(r_1)\otimes 1)~=~\sum_{i=2}^d(\delta(r_i)r_1-r_i\delta(r_1))\otimes \lambda_i
$$
is again in $I$.  By minimality of $d$, we have $\lambda_2,\ldots ,\lambda_d$ are $k$-linearly independent and $\delta(r_i)r_1=r_i\delta(r_1)$ for all $2\le i\le d$. In terms of $\Frac R$, the above equality can be read as $\delta(r_i/r_1)=0$ for all $\delta\in\Delta$ or
equivalently $z_i\in Z_\Delta(\Frac R)$ for all $1\le i\le d$. Since $(0)$ is rational, we have that all $z_i$'s are algebraic over $k$. This proves the claim.

Now since $[F:k]<\infty$, we see that $[F\otimes_kK: K]<\infty$ and $y\in F\otimes_k K$ is algebraic over $K=k\otimes_k K$.  In particular, there is a non-trivial relation $y^m (1\otimes c_m)+ y^{m-1} (1\otimes c_{m-1}) + \cdots + (1\otimes c_0)=0$ for some integer $m\ge 1$ and $c_i\in K$.  Furthermore, we may assume $c_0$ is nonzero since $F\otimes_k K\subseteq (\Frac R)\otimes_kK$ is an integral domain. Without loss of generality, set $c_0=1$.  Then by construction
\begin{align*}
r_1^m~&=~\sum_{j=0}^m (r_1\otimes 1)^{m-j} x^j (1\otimes c_j)-\sum_{j=1}^m (r_1\otimes 1)^{m-j} x^j (1\otimes c_j)\\
~&=~-\sum_{j=1}^m (r_1\otimes 1)^{m-j} x^j (1\otimes c_j)\in I.
\end{align*}
Since $r_1$ is nonzero in the integral domain $R$, we have $r_1^m$ is nonzero and the result follows.
\end{proof}

\begin{remark} We note that this result need not hold if $(\Frac R)\otimes_k K$ is not an integral domain.  For example, if $R=K=\mathbb{Q}(\sqrt{2})$ and $k=\mathbb{Q}$.  Then $R\otimes_k K$ is not an integral domain and hence there is some nonzero prime ideal $I$ of $R\otimes_k K$.  But $R$ is a field, so $I\cap (R\otimes 1)$ is necessarily zero. 
\end{remark}

The following key lemma reveals that, for a commutative noetherian differential $k$-algebra $(R,\Delta)$, the rationality of $\Delta$-prime ideals can be purely captured in terms of the poset $(\Delta\text{-}{\rm spec}\,R, \subseteq)$ whenever both ${\rm dim}_k R$ and $|\Delta|<|k|$. The proof we give below follows closely the proof of \cite[Lemma 2.3]{BWY19}.

\begin{lemma} Let $(R,\Delta)$ be a commutative differential $k$-algebra. Suppose $R$ is noetherian and both ${\rm dim}_k R$ and $|\Delta|<|k|$. Then any $\Delta$-prime ideal  is $\Delta$-rational if and only if it is $|k|$-separable in $\Delta$-${\rm spec}\ R$.
\label{card}
\end{lemma}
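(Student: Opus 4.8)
The plan is to reduce to the case $P = (0)$ and then prove the two directions separately, both by comparing the poset structure of $\Delta\text{-}{\rm spec}\, R$ near $P$ with the behaviour of the extension of scalars $R \otimes_k K$ where $K = Z_\Delta(\Frac R/P)$ (or a large enough algebraically closed field, as in \cite{BWY19}). First I would replace $R$ by $R/P$, noting that $\Delta$ passes to the quotient, that $P$ is $|k|$-separable in $\Delta\text{-}{\rm spec}\, R$ iff $(0)$ is $|k|$-separable in $\Delta\text{-}{\rm spec}\, R/P$ (covers of $P$ correspond to minimal nonzero $\Delta$-primes of $R/P$), and that $\dim_k R/P \le \dim_k R < |k|$ and $|\Delta|$ is unchanged. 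So it suffices to show: $(0)$ is $\Delta$-rational $\iff$ $(0)$ is $|k|$-separable in $\Delta\text{-}{\rm spec}\, R$, i.e. $R$ has $< |k|$ minimal nonzero $\Delta$-primes, for $R$ a $\Delta$-prime (hence here a domain) noetherian $k$-algebra with $\dim_k R, |\Delta| < |k|$.

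For the easy direction, suppose $(0)$ is \emph{not} $\Delta$-rational, so there is $z \in Z_\Delta(\Frac R)$ transcendental over $k$. Then for each $\lambda \in k$ the element $z - \lambda$ lies in $Z_\Delta(\Frac R)$ and is invertible there, and I would use this to manufacture, for each $\lambda$, a distinct minimal nonzero $\Delta$-prime of $R$: writing $z = a/b$ with $a,b \in R$, the ideal generated by $a - \lambda b$ is a nonzero $\Delta$-ideal (because $\delta(a - \lambda b) = \delta(b)(z - \lambda) \cdot (\text{unit})$ stays inside after clearing denominators — more precisely $b^2\delta(z) = 0$ forces $\delta(a)b = a\delta(b)$, so $\delta(a - \lambda b) = (\delta(b)/b)(a - \lambda b)$ in $\Frac R$, hence $a-\lambda b$ generates a $\Delta$-ideal of the localization $R_b$, and one pulls this back). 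Distinct $\lambda$ give comaximal such ideals (their difference is a unit multiple of $b \in k[z]$-units argument), so they sit below pairwise distinct minimal $\Delta$-primes. Since $|k|$ is infinite this produces $|k|$ many minimal nonzero $\Delta$-primes, so $(0)$ is not $|k|$-separable. (This is essentially the argument from \cite[Lemma 2.3]{BWY19}, adapted; some care with the denominators is the only subtlety.)

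For the converse — the harder direction — suppose $(0)$ is $\Delta$-rational; I must show $R$ has strictly fewer than $|k|$ minimal nonzero $\Delta$-primes. Pick a field extension $K/k$ with $|K| \ge$ the number of such primes and $(\Frac R)\otimes_k K$ still a domain (e.g. $K$ a purely transcendental extension, or an algebraically closed field of large transcendence degree; one checks $(\Frac R)\otimes_k K$ is a domain using that $\Frac R$ is a regular extension of $k$ after enlarging $k$ within $\Frac R$ by the relative algebraic closure, which is finite-dimensional since $\dim_k R < |k|$ and we may assume $k$ algebraically closed in $\Frac R$ — this is where $\dim_k R < |k|$ is used). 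For each minimal nonzero $\Delta$-prime $Q$ of $R$, the extended ideal $Q\otimes_k K$ is a nonzero $\Delta$-ideal of $R\otimes_k K$ (the extended derivations being $K$-linear), so by Lemma~\ref{lem1} it contains some $r_Q \otimes 1$ with $0 \ne r_Q \in R$, hence $r_Q \in Q$. If the $Q$'s were $\ge |k|$ in number, a counting/linear-independence argument over $K$ — choosing the $r_Q$ cleverly or using that distinct minimal primes of a noetherian domain cannot all contain a common element unless... — actually the cleaner route is: map $R \otimes_k K \to \prod_Q (\Frac R/Q \text{-ish})$ and derive a contradiction with $\dim_k R < |k| \le |K|$ via a Vandermonde-type independence argument exactly as in \cite[Lemma 2.3]{BWY19}. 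The main obstacle is precisely this last contradiction-by-cardinality step: one must arrange the field $K$ and the elements $r_Q$ so that having $|k|$-many distinct minimal $\Delta$-primes forces $|k|$-many $k$-linearly independent elements inside an algebra of dimension $< |k|$, and keeping track of the role of $|\Delta| < |k|$ (needed so that the $\Delta$-core computations and the extension of all derivations stay within the allowed cardinality) requires the same bookkeeping as in the cited lemma. I would follow that proof essentially line by line, substituting "$\Delta$-prime" for "prime" and invoking Lemma~\ref{lem1} and Lemma~\ref{lem:equisep} where \cite{BWY19} invokes their analogues.
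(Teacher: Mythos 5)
Your reduction to $P=(0)$ and the translation of $|k|$-separability into ``fewer than $|k|$ minimal nonzero $\Delta$-primes'' via Lemma \ref{lem:equisep} are fine, but your ``easy'' direction has a genuine gap at exactly the point where the hypothesis $\dim_k R<|k|$ must be used: you never rule out that $a-\lambda b$ is a unit of $R_b$, i.e.\ that $(a-\lambda b)R_b$ is the whole ring, in which case no prime $Q_\lambda$ exists at all. This can happen for every $\lambda$ (take $R=k(s,t)$ with $\Delta=\{\partial/\partial t\}$: the zero ideal is not $\Delta$-rational, yet $\Delta\text{-}{\rm spec}\,R$ is a single point and $(0)$ is trivially $|k|$-separable), so the direction is false without a dimension bound, and your sketch of it never invokes $\dim_k R<|k|$ --- a sure sign it cannot be complete. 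The paper closes this hole with the Amitsur trick: the set $U$ of $\lambda$ for which $z-\lambda$ is invertible in a suitable localization of $k$-dimension $<|k|$ has cardinality $<|k|$, since $|k|$-many inverses $(z-\lambda)^{-1}$ would otherwise force either a violation of the dimension bound or an algebraic relation for $z$ over $k$. With that step added (plus Krull's principal ideal theorem so that the $Q_\lambda$ are height one, whence a covering family of size $<|k|$ cannot exist), your construction is essentially the paper's argument for this direction.

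The harder direction, $\Delta$-rational implies $|k|$-separable, is not actually proved in your proposal, and the plan you sketch points the wrong way. Extending scalars upward to a large field $K\supseteq k$ and applying Lemma \ref{lem1} to $R\otimes_k K$ only says that nonzero $\Delta$-ideals of $R\otimes_k K$ meet $R\otimes 1$; your deduction ``$r_Q\in Q$ with $r_Q\neq 0$'' is vacuous, and the Vandermonde/cardinality contradiction you then reach for is the tool belonging to the other direction. The paper's actual mechanism is a descent to a small coefficient field: expand the structure constants of multiplication and of the action of $\Delta$ on a $k$-basis of $R$, let $F=k_0(T)$ be the subfield they generate (so $|F|<|k|$), and let $R_0$ be the $F$-span of the basis, a differential $F$-form of $R$ with $R_0\otimes_F k\cong R$, $R_0$ a noetherian domain, and $|R_0|<|k|$. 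Lemma \ref{lem1}, applied with base field $F$ and extension $k$, then shows that every nonzero $\Delta$-ideal of $R$ contains some $a\otimes 1$ with $0\neq a\in R_0$; for each such $a$ the nonzero $\Delta$-semiprime ideal $\bigcap\{P\in\Delta\text{-}{\rm spec}\,R \mid a\otimes 1\in P\}$ has only finitely many minimal $\Delta$-primes by Lemma \ref{DeltaP}(iii), and collecting these over all $a\in R_0\setminus\{0\}$ yields a covering family of cardinality at most $|R_0|\cdot\aleph_0<|k|$, which is exactly $|k|$-separability. This small-subfield construction is the key idea of the proof and is entirely absent from your proposal; without it (or some equivalent device) the second direction does not close.
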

\begin{proof}
Let $P$ be a $\Delta$-prime ideal of $R$. By replacing $R$ with $R/P$, we may assume that $P=(0)$ and $R$ is a noetherian integral domain. Notice that if $|k|\le \aleph_0$ then ${\rm dim}_k R<\infty$ and so $R$ is just a finite field extension of $k$. Then $(0)$ is $\Delta$-rational, maximal ideal and the claim is vacuously true in this case.  Thus we assume henceforth that $k$ is uncountable.

Let $\mathcal{B}=\{r_{\alpha}\colon \alpha\in \Lambda\}$ be a $k$-basis for $R$, where $\Lambda$ is an index set with $|\Lambda|=\dim_k R<|k|$.  Then for any $\alpha,\beta\in \Lambda$ and $\delta\in \Delta$, we can write
$$r_{\alpha}r_{\beta}~=~\sum_{\gamma\in \Lambda} c_{\alpha,\beta}^{\gamma}\, r_{\gamma}\quad \text{and}\quad \delta(r_\alpha)~=~\sum_{\gamma\in \Lambda}d_{\delta,\alpha}^{\gamma}\, r_\gamma,$$
where the coefficients $c_{\alpha,\beta}^{\gamma}, d_{\delta, \alpha}^{\gamma}\in k$ are zero for all but finitely many $\gamma\in \Lambda$.  Consider the subset
$$T~:=~\left\{c_{\alpha,\beta}^{\gamma}, d_{\delta,\alpha}^{\gamma}\,\Big |\, \forall\ \alpha,\beta,\gamma\in \Lambda, \delta\in \Delta\right\}$$
of $k$. Since for any $(\alpha,\beta,\delta)\in \Lambda\times \Lambda\times \Delta$ there are only finitely many values of $\gamma$ for which $c_{\alpha,\beta}^{\gamma}$ and $d_{\delta,\alpha}^{\gamma}$ are nonzero, we can construct an injection from
$T\hookrightarrow \Lambda\times \Lambda\times \Delta \times \mathbb{N}$. In particular, $|T|\le |\Lambda|^2 |\Delta||\mathbb{N}| < |k|$, since $k$ is uncountable and $|\Lambda|,|\Delta|<|k|$.

Now let $k_0$ be the prime subfield of $k$ and let $F=k_0(T)$ be the extension of $k_0$ generated by $T$.  Set $R_0 :=\sum_{\alpha\in \Lambda} F r_{\alpha}$ to be the $F$-subspace of $R$. We claim that $R_0$ is a commutative  differential $F$-subalgebra of $R$ that is a noetherian integral domain satisfying $R_0\otimes_Fk\cong R$ and $|\Frac R_0|<|k|$.
By construction, $R_0$ is closed under multiplication of $R$ and is invariant under the actions of $\Delta$. So $R_0$ is a commutative differential  $F$-subalgebra of $R$ satisfying $R_0\otimes_Fk\cong R$. Now since $R$ is a free $R_0$-module, we have an inclusion-preserving injection from the set of left $R_0$-modules to the set of left $R$-modules. In particular, $R_0$ is noetherian since $R$ is. Clearly $R_0\subset R$ is an integral domain since $R$ is. Moreover, since ${\rm dim}_F R_0 < |k|$, $|F|\le \aleph_0|T|<|k|$, and $k$ is uncountable, we have $|R_0|<|k|$. Since every element of $\Frac R_0$ can be expressed in the form $sr^{-1}$ with $s,r\in R_0$, we have that $|\Frac R_0|<|k|$. This proves the claim.

We first show that $|k|$-separability implies $\Delta$-rationality. Suppose that $(0)$ is not $\Delta$-rational. Then there is some $z=ab^{-1}\in Z_\Delta(\Frac R)$ that is not algebraic over $k$ for some $a,b\in R$. Since $z$ is not in $k$, we have $a$ and $b$ are $k$-linearly independent and we may assume without loss of generality that $a,b\in \mathcal{B}\subset R_0$ in the above construction. So $z=ab^{-1}\in \Frac R_0$. Now for any $\lambda\in k$, consider the element $z_{\lambda}  := z\otimes 1 - 1\otimes \lambda$ in $(\Frac R_0)\otimes_F k$ and the subset
$$U~:=~\left\{\lambda\in k\,|\, z_{\lambda}~{\rm is~a~unit~in~}(\Frac R_0)\otimes_F k\right\}$$
of $k$. We show that $|U|<|k|$ by the Amitsur trick. If not, suppose $|U|=|k|$. Since $(\Frac R_0)\otimes_F k$ has dimension strictly less than $|k|$, there is necessarily a (finite) $k$-dependence of $z_{\lambda}^{-1}$ with $\lambda\in U$; after clearing denominators in this dependence, we get that $z$ is algebraic over $k$, which is a contradiction.

We see, as a localization of $R$, $(\Frac R_0)\otimes_F k$ is also a noetherian integral domain. Let $\lambda\in k\setminus U$.
By Krull's principal ideal theorem, we have $(z_{\lambda})$ is contained in some height one prime ideal $P_{\lambda}$ of $(\Frac R_0)\otimes_F k$. Moreover, the $\Delta$-core of $P_\lambda$ is nonzero since it contains $z_\lambda\in Z_\Delta(\Frac R)$ and is again prime by Lemma \ref{DeltaP}(i). This forces $P_\lambda=(P_\lambda:\Delta)$ to be a height one $\Delta$-prime ideal of $(\Frac R_0)\otimes_K k$. By Lemma \ref{DeltaP}(ii), $Q_{\lambda}:=P_{\lambda}\cap R$ is a height one $\Delta$-prime ideal of $R$ . We notice that $Q_\lambda\neq Q_{\lambda'}$ if $\lambda\neq \lambda'$ in $k\setminus U$. Otherwise, it would imply $0\neq \lambda'-\lambda=z_\lambda-z_\lambda'\in Q_\lambda=Q_{\lambda'}$ and we get a contradiction.

Now let us assume that $(0)$ is $|k|$-separable in $\Delta$-${\rm spec}\, R$, namely there exists an index set $\mathscr S$ with $|\mathscr S|<|k|$ and a set of nonzero $\Delta$-prime ideals $\{P_s\,|\, s\in \mathscr S\}$ such that every nonzero $\Delta$-prime ideal of $R$ contains some $P_s$.  Then since $|\mathscr S|<|k|$ and $|k\setminus U|=|k|$, there is some $s\in \mathscr S$ such that $P_s\subseteq Q_{\lambda}$ for infinitely many $\lambda\in k\setminus U$. Since $Q_{\lambda}$'s are height one primes of $R$, we have that $0\subsetneq P_s\subseteq Q_\lambda$ implies that $P_s=Q_\lambda$. But $Q_\lambda\neq Q_{\lambda'}$ if $\lambda\neq \lambda'$, so we get a contradiction. Thus we have shown one direction of the lemma.

For the other direction such that $\Delta$-rationality implies $|k|$-separability, we suppose that $(0)$ is $\Delta$-rational.  Retaining the above notations,  for any $0\neq a\in R_0$, let $\mathscr S_a$ denote the set of $\Delta$-prime ideals of $R$ containing $a\otimes 1$. By Lemma \ref{lem1}, since $(0)$ is rational and $R_0$ and $(\Frac R_0)\otimes_F k$ are integral domains, every nonzero $\Delta$-ideal of $R_0\otimes_F k\cong R$ contains an element of the form $a\otimes 1$ with some nonzero $a$ in $R_0$. Then
$$\Delta\text{-}{\rm spec}\, R\setminus \{(0)\}~=~\bigcup_{a\in R_0\setminus \{0\}} \mathscr S_a.$$
Now for each nonzero $a$ in $R_0$, let $P_a = \bigcap_{P\in \mathscr S_a} P$.  Then $P_a$ is a nonzero $\Delta$-semiprime ideal of $R$ since $a\otimes 1\in P_a$.  Thus by Lemma \ref{DeltaP}(iii), $P_a$ is a finite intersection of nonzero $\Delta$-prime ideals, say $P=P_{a,1}\cap \cdots \cap P_{a,n_a}$ and every nonzero $\Delta$-prime ideal containing $P_a$ contains some $P_{a,i}$. Finally, we let
$$\mathscr S~=~\left\{P_{a,i}\,|\, a\in R_0\setminus \{0\},\, 1\le i\le n_a\right\}.$$  Then $\mathscr S$ is a set of nonzero $\Delta$-prime ideals of $R$ of cardinality at most $|R_0|\times |\mathbb{N}| < |k|$ and by construction, every nonzero $\Delta$-prime ideal of $R$ contains some $\Delta$-prime ideal from $\mathscr S$.  So $(0)$ is $|k|$-separable in $\Delta$-${\rm spec}\,R$ by Lemma \ref{lem:equisep}. This completes the proof.
\end{proof}

\begin{remark}
For a commutative differential $k$-algebra $(R,\Delta)$, we can always replace $\Delta$ by a basis of the $k$-linear span of $\Delta$ in $\Der_k\, R$. So the condition $|\Delta|<|k|$ can be replaced by $\dim_k\, {\rm span}_k(\Delta)<|k|$. Now if $\aleph_0\le \dim_k\, R<|k|$ and $\Der_k R$ is a finitely generated $R$-module, we have $\dim_k\, {\rm span}_k(\Delta)\le \dim_k (\Der_k\, R)\le \dim_k\, R< |k|$. 
\end{remark}

Now we are able to give our main theorems. The first main result is a generalization of \cite[Theorem 7.1]{BLLM17}, which can be thought as a weak version of the $\Delta$-Dixmier-Moeglin equivalence. In particular for the Poisson Dixmier-Moeglin equivalence, the third topological condition below was replaced by the finiteness condition of Poisson prime ideals of height exactly increased by one in \cite[Theorem 7.1(3)]{BLLM17}.

\begin{theorem}\label{thm:rationalprimitive}
Let $(R,\Delta)$ be a commutative differential $k$-algebra. Suppose $R$ is noetherian and both ${\rm dim}_k R$ and $|\Delta|<|k|$. For a $\Delta$-prime ideal $P$ of $R$, the following are equivalent:
\begin{enumerate}
\item $P$ is $\Delta$-primitive;
\item $P$ is $\Delta$-rational;
\item $P$ is $|k|$-separable in $\Delta$-${\rm spec}\ R$.
\end{enumerate}
\end{theorem}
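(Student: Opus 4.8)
The strategy is to prove the cycle of implications (i)$\Rightarrow$(ii)$\Rightarrow$(iii)$\Rightarrow$(i). The implication (i)$\Rightarrow$(ii) is contained in Proposition \ref{weakPDME}, and (ii)$\Leftrightarrow$(iii) is precisely Lemma \ref{card}; both hold under the standing hypotheses of the theorem, since $R$ is noetherian with $\dim_k R$ and $|\Delta|$ strictly less than $|k|$. So the only thing left to establish is (iii)$\Rightarrow$(i): every $|k|$-separable $\Delta$-prime ideal of $R$ is $\Delta$-primitive.

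For this implication I would first pass to $R/P$, reducing to the case $P=(0)$ with $R$ a noetherian domain, $\dim_k R<|k|$, and $(0)$ being $|k|$-separable in $\Delta$-${\rm spec}\,R$; the hypotheses are inherited, and $|k|$-separability transfers along the homeomorphism $\Delta$-${\rm spec}\,(R/P)\cong\overline{\{P\}}$. If $\dim_k R<\aleph_0$ then $R$ is a finite field extension of $k$, so $(0)$ is its unique maximal ideal and $(0)=((0):\Delta)$ is $\Delta$-primitive; hence we may assume $\dim_k R\ge\aleph_0$, which forces $k$ to be uncountable. Unpacking the definition, $|k|$-separability of the generic point $(0)$ means there is a family $\{P_s\}_{s\in\mathscr{S}}$ of nonzero $\Delta$-prime ideals with $|\mathscr{S}|<|k|$ such that every nonzero $\Delta$-prime ideal of $R$ contains some $P_s$; choose a nonzero $f_s\in P_s$ for each $s$.

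The key step is to produce a maximal ideal $M$ of $R$ containing none of the $f_s$. Set $R':=R[f_s^{-1}:s\in\mathscr{S}]$, a noetherian domain into which $R$ embeds. As a quotient of the polynomial ring $R[y_s:s\in\mathscr{S}]$ via $y_s\mapsto f_s^{-1}$, one gets $\dim_k R'\le\dim_k R\cdot\mu$, where $\mu$ is the number of monomials in the $y_s$; since $\mu\le\max(|\mathscr{S}|,\aleph_0)<|k|$ and $\dim_k R<|k|$, this yields $\dim_k R'<|k|$, so $R'$ satisfies the Nullstellensatz over $k$ by \cite[Proposition II.7.16]{BrGo}. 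Pick any maximal ideal $\mathfrak{m}'$ of $R'$; then $R'/\mathfrak{m}'$ is algebraic over $k$, and since the composite $R\to R'\to R'/\mathfrak{m}'$ has kernel $M:=R\cap\mathfrak{m}'$, the domain $R/M$ embeds into $R'/\mathfrak{m}'$ and is therefore a field algebraic over $k$; thus $M$ is maximal in $R$. Each $f_s$ is a unit of $R'$, so $f_s\notin\mathfrak{m}'$ and hence $f_s\notin M$. Finally, $(M:\Delta)$ is a $\Delta$-prime ideal contained in $M$ by Lemma \ref{DeltaP}(i); were it nonzero, $|k|$-separability would give some $P_{s_0}\subseteq(M:\Delta)\subseteq M$ and hence $f_{s_0}\in M$, a contradiction. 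Therefore $(M:\Delta)=(0)=P$ is $\Delta$-primitive, which closes the cycle. The one delicate point — and the only place where the bound $\dim_k R<|k|$ is genuinely needed — is keeping $\dim_k R'<|k|$ after inverting the $|\mathscr{S}|$ elements $f_s$, so that the Nullstellensatz still applies to $R'$; everything else is formal once Lemma \ref{card} and Proposition \ref{weakPDME} are granted.
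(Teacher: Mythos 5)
Your proposal is correct and follows essentially the same route as the paper: reduce to $P=(0)$, invoke Proposition \ref{weakPDME} and Lemma \ref{card} for (i)$\Rightarrow$(ii)$\Leftrightarrow$(iii), and for (iii)$\Rightarrow$(i) invert the chosen elements $f_s$ (your $R[f_s^{-1}]$ is the paper's localization $RT^{-1}$), check the dimension stays below $|k|$ so the Nullstellensatz applies, and contract a maximal ideal to get $M$ with $(M:\Delta)=(0)$. The only differences are cosmetic: you spell out the countable-$k$/finite-dimensional case and bound $\dim_k R'$ via a polynomial-ring presentation instead of counting the multiplicative set, both of which match the paper's intent.
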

\begin{proof}
(i)$\Rightarrow$(ii) is Proposition \ref{weakPDME} and (ii)$\Leftrightarrow$(iii) is Lemma \ref{card}. So it suffices to show (iii)$\Rightarrow$(i). We may assume that $k$ is uncountable and $P=(0)$ where $R$ becomes a noetherian integral domain.  By definition, there is a subset $\{Q_s\in \Delta\text{-}{\rm spec}\, R\setminus\{(0)\}\,|\, s\in \Lambda\}$, where $\Lambda$ is an index set with $|\Lambda|<|k|$, such that every nonzero $\Delta$-prime ideal of $R$ must contain some $Q_s$. For each $s\in \Lambda$, choose some nonzero $f_s\in Q_s$. Denote by $T$ the multiplicative closed subset of $R$ generated by all $f_s$, and $B:=RT^{-1}$ the localization of $R$ at $T$. It is clear that $|T|\le \aleph_0|\Lambda|<|k|$ since $|k|$ is uncountable. So $B$ is noetherian with $\dim_k B<|k|$. Then $B$ satisfies the Nullstellensatz \cite[Proposition II.7.16]{BrGo} such that for any maximal ideal $J$ of $B$, $B/J$ is algebraic over $k$. Let $I:=J\cap R$ for any maximal ideal $J$ of $B$. Since $R/I$ embeds into $B/J$, we can see that $R/I$ too is an algebraic extension of $k$ and hence $I$ is maximal in $R$. By the construction of $T$, we have $I=J\cap R$ does not intersect $T$, which implies that $I$ does not contain any $Q_s$. But then we must have $(0)=(I:\Delta)$ is $\Delta$-primitive since otherwise $f_s\in Q_s\subseteq (I:\Delta)\subseteq I$ for some $Q_s$ for $(I:\Delta)$ is $\Delta$-prime by Lemma \ref{DeltaP}(i). This completes the proof.
\end{proof}

Our next main result reveals that, for any commutative noetherian differential $k$-algebra $(R,\Delta)$ whenever both $\dim_k R$ and $|\Delta|<|k|$, the Zariski topology of $\Delta\text{-}{\rm spec}\, R$ can detect the $\Delta$-Dixmier-Moeglin equivalence.

\begin{theorem}
\label{thm:main1}
Let $(R,\Delta)$ be a commutative differential $k$-algebra. Suppose that $R$ is noetherian and both ${\rm dim}_k R$ and $|\Delta|<|k|$. Then the following are equivalent:
\begin{enumerate}
\item $R$ satisfies the $\Delta$-Dixmier-Moeglin equivalence;
\item Every $\Delta$-rational ideal of $R$ is locally closed in $\Delta$-${\rm spec}\, R$;
\item Every $\Delta$-primitive ideal of $R$ is locally closed in $\Delta$-${\rm spec}\, R$;
\item In the Zariski space $\Delta$-${\rm spec}\, R$, that a $\Delta$-prime ideal $P$ is $|k|$-separable implies that it is $\aleph_0$-separable;
\item In the poset $(\Delta$-${\rm spec}\, R,\subseteq)$, that a $\Delta$-prime ideal $P$ is $|k|$-separable implies that it is $\aleph_0$-separable.
\end{enumerate}
\end{theorem}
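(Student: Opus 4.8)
The plan is to reduce the entire chain of equivalences to three results already established, none of which needs to be reproved: Theorem \ref{thm:rationalprimitive}, which under the standing hypotheses identifies, pointwise, the $\Delta$-primitive ideals, the $\Delta$-rational ideals, and the $|k|$-separable $\Delta$-prime ideals of $\Delta$-${\rm spec}\,R$; Lemma \ref{lem:Zlocallyclosed} (applicable because $R$ is noetherian, so $\Delta$-${\rm spec}\,R$ is a Zariski space), which identifies the locally closed points of $\Delta$-${\rm spec}\,R$ with the $\aleph_0$-separable ones; and Lemma \ref{lem:equisep}, which says that for every cardinal $\kappa$ the notion of $\kappa$-separability of a point in the Zariski space $\Delta$-${\rm spec}\,R$ coincides with $\kappa$-separability in the associated poset $(\Delta$-${\rm spec}\,R,\subseteq)$. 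Granting these, the argument is essentially bookkeeping: I will prove the four biconditionals (iv)$\Leftrightarrow$(v), (iii)$\Leftrightarrow$(iv), (ii)$\Leftrightarrow$(iii), (i)$\Leftrightarrow$(ii).

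First, (iv)$\Leftrightarrow$(v) is immediate from Lemma \ref{lem:equisep} applied simultaneously with $\kappa=|k|$ and with $\kappa=\aleph_0$: at each $\Delta$-prime $P$ the two separability properties agree, so the implication ``$|k|$-separable $\Rightarrow$ $\aleph_0$-separable'' holds for every $\Delta$-prime in the Zariski space exactly when it holds for every $\Delta$-prime in the poset. Next, (iii)$\Leftrightarrow$(iv): by Theorem \ref{thm:rationalprimitive} a $\Delta$-prime $P$ is $\Delta$-primitive precisely when it is $|k|$-separable, and by Lemma \ref{lem:Zlocallyclosed} it is locally closed in $\Delta$-${\rm spec}\,R$ precisely when it is $\aleph_0$-separable; substituting these two dictionary entries turns the sentence ``every $\Delta$-primitive ideal of $R$ is locally closed'' word-for-word into ``every $|k|$-separable $\Delta$-prime ideal is $\aleph_0$-separable,'' which is (iv). For (ii)$\Leftrightarrow$(iii), Theorem \ref{thm:rationalprimitive} also gives that the set of $\Delta$-rational ideals equals the set of $\Delta$-primitive ideals, so the assertions ``every $\Delta$-rational ideal is locally closed'' and ``every $\Delta$-primitive ideal is locally closed'' are literally the same statement.

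Finally, (i)$\Leftrightarrow$(ii). Proposition \ref{weakPDME} furnishes, as subsets of $\Delta$-${\rm spec}\,R$, the inclusions $\{\text{locally closed}\}\subseteq\{\Delta\text{-primitive}\}\subseteq\{\Delta\text{-rational}\}$. By definition, $R$ satisfies the $\Delta$-Dixmier-Moeglin equivalence exactly when these three subsets coincide; given the inclusions, that happens if and only if $\{\Delta\text{-rational}\}\subseteq\{\text{locally closed}\}$, i.e. if and only if every $\Delta$-rational ideal of $R$ is locally closed in $\Delta$-${\rm spec}\,R$, which is (ii). Chaining the four biconditionals yields the theorem.

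The one place I expect to have to be careful is the interface between the ``for all ideals in a named class'' conditions (i)--(iii) and the purely order-theoretic/topological conditions (iv)--(v): this bridge is carried entirely by Theorem \ref{thm:rationalprimitive}, which converts the ring-theoretic notion ``$\Delta$-primitive'' into the combinatorial notion ``$|k|$-separable,'' and that conversion is precisely where the cardinality hypotheses $\dim_k R<|k|$ and $|\Delta|<|k|$ are consumed (together with the noetherian hypothesis, needed so that $\Delta$-${\rm spec}\,R$ is a Zariski space for Lemma \ref{lem:Zlocallyclosed} and so that Lemma \ref{DeltaP} and Proposition \ref{weakPDME} apply). Beyond invoking the correct lemma with the correct value of $\kappa$, there is no substantive obstacle.
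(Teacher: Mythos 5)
Your proposal is correct and follows the same route as the paper, which likewise derives the theorem directly from Proposition \ref{weakPDME}, Theorem \ref{thm:rationalprimitive}, and Lemma \ref{lem:equisep} (noting that the order of Lemma \ref{lem:equisep} on $\Delta$-${\rm spec}\,R$ is inclusion), with the locally closed $\Leftrightarrow$ $\aleph_0$-separable dictionary supplied by Lemma \ref{lem:Zlocallyclosed} (equivalently Lemma \ref{lem:equilocallyclosed}); your unpacking into the four biconditionals is just a more explicit bookkeeping of the same argument.
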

\begin{proof}
The above equivalences are direct consequences of Proposition \ref{weakPDME}, Theorem \ref{thm:rationalprimitive}, and  Lemma \ref{lem:equisep}. Note that for $\Delta$-${\rm spec}\, R$, the partial order $\leq$ of Lemma \ref{lem:equisep} amounts to inclusion of $\Delta$-prime ideals.
\end{proof}

Now we give some consequences of our main results.

Brown and Gordon showed the Poisson Dixmier-Moeglin equivalence holds for any affine complex Poisson algebra with only finitely many Poisson primitive ideals \cite[Lemma 3.4]{BrGor03}. Our main theorems yield the similar result in terms of any commutative noetherian differential algebra over large base field with finitely many $\Delta$-primitive ideals.

\begin{corollary}
\label{cor:finitePrim}
Let $(R,\Delta)$ be a commutative differential $k$-algebra. Suppose that $R$ is noetherian with both ${\rm dim}_k R$ and $|\Delta|<|k|$. If every point in $\Delta$-${\rm spec}\, R$ is $\aleph_0$-separable, then $R$ satisfies the $\Delta$-Dixmier-Moeglin equivalence. In particular if $\Delta$-${\rm prim}\, R$ is finite, then $R$ satisfies the $\Delta$-Dixmier-Moeglin equivalence.
\end{corollary}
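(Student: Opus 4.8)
The plan is to deduce this from Theorem \ref{thm:main1}. For the first assertion there is essentially nothing to do: if every point of $\Delta\text{-}{\rm spec}\, R$ is $\aleph_0$-separable, then condition (iv) of Theorem \ref{thm:main1}---that a $|k|$-separable $\Delta$-prime ideal is $\aleph_0$-separable---holds vacuously, so $R$ satisfies the $\Delta$-Dixmier-Moeglin equivalence. I expect no obstacle here; it is a one-line invocation of the main theorem.

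For the ``in particular'' clause I would reduce to the first assertion by showing that finiteness of $\Delta\text{-}{\rm prim}\, R$ forces every $\Delta$-prime ideal $P$ of $R$ to be $\aleph_0$-separable, hence (Lemma \ref{lem:Zlocallyclosed}) locally closed. By the equivalence (i)$\Leftrightarrow$(iii) of Lemma \ref{lem:equilocallyclosed} it is enough to check that the intersection $I$ of all $\Delta$-prime ideals properly containing $P$ is a $\Delta$-ideal properly containing $P$. The key step is to apply Lemma \ref{DeltaP}(iv): each $\Delta$-prime ideal $Q\supsetneq P$ is the intersection of the $\Delta$-primitive ideals containing it, and each such $\Delta$-primitive ideal again properly contains $P$. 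A short comparison of index sets then shows that $I$ coincides with the intersection of those $\Delta$-primitive ideals that properly contain $P$, of which there are only finitely many; in particular $I$ is a finite intersection of $\Delta$-ideals, so it is a $\Delta$-ideal.

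Finally I would verify $I\supsetneq P$ by a primeness argument. If no $\Delta$-primitive ideal properly contains $P$, then $I=R\supsetneq P$. Otherwise write $I=M_1\cap\cdots\cap M_n$ with $n\ge 1$ and each $M_i\supsetneq P$; if $I=P$ then $M_1\cdots M_n\subseteq I=P$, so $M_i\subseteq P$ for some $i$ because $P$ is prime, contradicting $M_i\supsetneq P$. The only point demanding attention is this empty-versus-nonempty dichotomy for the set of $\Delta$-primitive ideals lying above $P$; the rest is routine bookkeeping with the cited lemmas.
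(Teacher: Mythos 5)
Your proof is correct, and the first assertion is handled exactly as in the paper: if every point of $\Delta$-${\rm spec}\, R$ is $\aleph_0$-separable, condition (iv) of Theorem \ref{thm:main1} holds vacuously. For the ``in particular'' clause you take a different, somewhat longer path than the paper. The paper simply observes that, by Lemma \ref{DeltaP}(iv), every $\Delta$-prime ideal is an intersection of $\Delta$-primitive ideals, so finiteness of $\Delta$-${\rm prim}\, R$ forces $\Delta$-${\rm spec}\, R$ itself to be finite (there are at most $2^{|\Delta\text{-}{\rm prim}\, R|}$ possible intersections), and in a finite Zariski space every point is trivially $\aleph_0$-separable. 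You instead verify criterion (iii) of Lemma \ref{lem:equilocallyclosed} for each $\Delta$-prime ideal $P$: using Lemma \ref{DeltaP}(iv) (together with Lemma \ref{DeltaP}(i)) you identify the intersection $I$ of the $\Delta$-primes properly containing $P$ with the intersection of the finitely many $\Delta$-primitive ideals properly containing $P$, and then properness $I\supsetneq P$ follows from primeness of $P$ via the product argument $M_1\cdots M_n\subseteq P$, with the empty case giving $I=R$. This is sound---note only that $I$ is automatically a $\Delta$-ideal, being an arbitrary intersection of $\Delta$-ideals, so the finiteness is really needed only for the properness step---but it buys nothing over the paper's argument, which from the same Lemma \ref{DeltaP}(iv) reaches in one line the stronger conclusion that $\Delta$-${\rm spec}\, R$ is finite.
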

\begin{proof}
If any point in $\Delta$-${\rm spec}\, R$ is $\aleph_0$-separable, we know $R$ satisfies the $\Delta$-Dixmier-Moeglin equivalence by Theorem \ref{thm:main1}. In particular, suppose there are only finitely many $\Delta$-primitive ideals. We notice that every $\Delta$-prime ideal of $R$ is an intersection of $\Delta$-primitive ideals by Lemma \ref{DeltaP}(iv). Since there are only finitely many $\Delta$-primitive ideals, this implies that $\Delta$-${\rm spec}\, R$ is finite and hence every point in $\Delta$-${\rm spec}\, R$ is $\aleph_0$-separable. This completes the proof.
\end{proof}

Our next corollary is parallel to the similar result in \cite[Theorem 1.1]{BWY19}.

\begin{corollary}
\label{cor:inclusionPreserve}
Let $(R,\Delta_R)$ and $(S,\Delta_S)$ be two commutative differential $k$-algebras. Suppose that $R,S$ are noetherian with ${\rm dim}_k R, {\rm dim}_k S, |\Delta_R|, |\Delta_S|<|k|$. If there is an isomorphism between the two posets $(\Delta$-${\rm spec}\, R,\subseteq)$ and $(\Delta$-${\rm spec}\, S,\subseteq)$, then $R$ satisfies the $\Delta$-Dixmier-Moeglin equivalence if and only if $S$ satisfies the $\Delta$-Dixmier-Moeglin equivalence. In particular, if $\Delta$-${\rm spec}\, R$ and $\Delta$-${\rm spec}\, S$ are homeomorphic, then $R$ satisfies the $\Delta$-Dixmier-Moeglin equivalence if and only if $S$ does.
\end{corollary}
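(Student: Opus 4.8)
The plan is to reduce the whole statement to the purely order-theoretic reformulation of the $\Delta$-Dixmier-Moeglin equivalence supplied by Theorem \ref{thm:main1}. Under the standing hypotheses on $R$ (noetherian, $\dim_k R<|k|$, $|\Delta_R|<|k|$), Theorem \ref{thm:main1}(v) says that $R$ satisfies the $\Delta$-Dixmier-Moeglin equivalence if and only if, in the poset $(\Delta\text{-}{\rm spec}\,R,\subseteq)$, every $|k|$-separable $\Delta$-prime ideal is $\aleph_0$-separable; and the analogous statement holds for $S$. Since $k$ is a fixed base field common to both differential algebras, the cardinal $|k|$ occurring in the criterion for $R$ and the one occurring in the criterion for $S$ are literally the same cardinal. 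Hence it suffices to show that a poset isomorphism $\varphi\colon(\Delta\text{-}{\rm spec}\,R,\subseteq)\to(\Delta\text{-}{\rm spec}\,S,\subseteq)$ transports $\lambda$-separability of points back and forth for every cardinal $\lambda$.

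This is immediate from the definition of $\kappa$-separability for posets, which depends only on the cardinality of the set of covers of the point in question. For each $\Delta$-prime $P$ of $R$, the isomorphism $\varphi$ restricts to an order-isomorphism of $\{Q:Q\gneqq P\}$ onto $\{Q':Q'\gneqq\varphi(P)\}$, hence carries minimal elements to minimal elements, and therefore induces a bijection between the set of covers of $P$ and the set of covers of $\varphi(P)$. Consequently these two sets have equal cardinality, so $P$ is $\lambda$-separable in $(\Delta\text{-}{\rm spec}\,R,\subseteq)$ if and only if $\varphi(P)$ is $\lambda$-separable in $(\Delta\text{-}{\rm spec}\,S,\subseteq)$; applying this with $\lambda=|k|$ and with $\lambda=\aleph_0$ shows that condition (v) of Theorem \ref{thm:main1} holds for $R$ exactly when it holds for $S$. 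This proves the first assertion.

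For the ``in particular'' clause, I would observe that a homeomorphism $f\colon\Delta\text{-}{\rm spec}\,R\to\Delta\text{-}{\rm spec}\,S$ automatically respects closures, so $f(\overline{\{x\}})=\overline{\{f(x)\}}$ and hence $y\in\overline{\{x\}}$ if and only if $f(y)\in\overline{\{f(x)\}}$ for all points $x,y$. By the definition of the specialization order used in Section \ref{S:sep} (namely $x\le y$ iff $y\in\overline{\{x\}}$), this says exactly that $f$ is an isomorphism of the associated posets $(\Delta\text{-}{\rm spec}\,R,\subseteq)$ and $(\Delta\text{-}{\rm spec}\,S,\subseteq)$; here one uses that $\Delta$-${\rm spec}$ of a noetherian ring is a Zariski space, so that the specialization preorder is genuinely a partial order and $\subseteq$ recovers it. Thus the homeomorphism hypothesis is a special case of the poset-isomorphism hypothesis, and the first part applies.

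There is no deep obstacle: the essential content was already isolated in Theorem \ref{thm:main1}, and what remains is the routine verification that $\kappa$-separability of a point in a poset, defined through the cardinality of its set of covers, is an invariant of the poset. The single point that must be stated with care is that the relevant cardinal $|k|$ is common to $R$ and $S$ precisely because the base field is fixed; comparing differential algebras over different base fields would break the argument even if the posets happened to be isomorphic.
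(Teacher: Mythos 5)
Your proposal is correct and follows essentially the same route as the paper: the authors likewise reduce to the equivalence (i)$\Leftrightarrow$(v) of Theorem \ref{thm:main1} and observe that condition (v) is preserved under any inclusion-preserving bijection (in particular under a homeomorphism, via the specialization order). Your extra verifications—that a poset isomorphism preserves covers and hence $\kappa$-separability, and that $|k|$ is the same cardinal for both algebras—are exactly the details the paper leaves implicit.
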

\begin{proof}
The result straight follows from Theorem \ref{thm:main1}(i)$\Leftrightarrow$(v) since the condition (v) is preserved under any inclusion-preserving bijection or homeomorphism between $\Delta$-${\rm spec}\, R$ and $\Delta$-${\rm spec}\, S$.
\end{proof}

\section{Stratification}\label{S:S}
In this section, we show that the $\Delta$-Dixmier-Moeglin equivalence is a local property for the Zariski space $\Delta$-${\rm spec}\, R$ while working over large base fields. Thus by considering proper stratification of the Zariski space $\Delta$-${\rm spec}\, R$, we can establish the $\Delta$-Dixmier-Moeglin equivalence by verifying it locally in each stratification.

\begin{theorem}
\label{thm:stratification}
Let $(R, \Delta)$ be a commutative noetherian differential $k$-algebra with both $\dim_k R$ and $|\Delta|<|k|$. Suppose that $\Delta$-${\rm spec}\, R=\bigcup X_i$ is a union of locally closed subsets with each $X_i$, when endowed with the subspace topology, homeomorphic to $\Delta$-${\rm spec}\, R_i$ for some noetherian commutative differential $k$-algebra $(R_i, \Delta_i)$ with both $\dim_k R_i$ and $|\Delta_i|<|k|$. Then $R$ satisfies the $\Delta$-Dixmier-Moeglin equivalence if and only if each $R_i$ satisfies the $\Delta_i$-Dixmier-Moeglin equivalence.
\end{theorem}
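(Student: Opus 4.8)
The plan is to route everything through the topological reformulation of the $\Delta$-Dixmier-Moeglin equivalence supplied by Theorem~\ref{thm:main1}: since $R$ is noetherian with $\dim_k R$ and $|\Delta|<|k|$, the algebra $(R,\Delta)$ satisfies the $\Delta$-Dixmier-Moeglin equivalence if and only if every $\Delta$-prime ideal that is $|k|$-separable in $\Delta\text{-}{\rm spec}\,R$ is also $\aleph_0$-separable there; and, because each $(R_i,\Delta_i)$ is noetherian with $\dim_k R_i$ and $|\Delta_i|<|k|$ by hypothesis, the same characterization applies to each $R_i$. The key point is that both cardinals involved, $|k|$ and $\aleph_0$, are \emph{infinite} (here $|k|\ge\aleph_0$ since $k$ has characteristic zero), so Lemma~\ref{lem:localpropertysep} is available for each of them, and $\kappa$-separability, being defined purely through closures of points, is invariant under homeomorphism.

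First I would fix $P\in\Delta\text{-}{\rm spec}\,R$ and, using that the $X_i$ cover $\Delta\text{-}{\rm spec}\,R$, choose an index $i$ with $P\in X_i$. Since $X_i$ is locally closed in $\Delta\text{-}{\rm spec}\,R$ and, with the subspace topology, is again a Zariski space, Lemma~\ref{lem:localpropertysep} gives, for each $\kappa\in\{\aleph_0,|k|\}$, that $P$ is $\kappa$-separable in $\Delta\text{-}{\rm spec}\,R$ if and only if $P$ is $\kappa$-separable in $X_i$. Transporting along the given homeomorphism $X_i\cong\Delta\text{-}{\rm spec}\,R_i$ and writing $P_i$ for the image of $P$, invariance of $\kappa$-separability under homeomorphism yields: $P$ is $\kappa$-separable in $X_i$ if and only if $P_i$ is $\kappa$-separable in $\Delta\text{-}{\rm spec}\,R_i$. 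Chaining these two equivalences gives, simultaneously for $\kappa=\aleph_0$ and $\kappa=|k|$, that $P$ is $\kappa$-separable in $\Delta\text{-}{\rm spec}\,R$ precisely when $P_i$ is $\kappa$-separable in $\Delta\text{-}{\rm spec}\,R_i$. Note no index-finiteness is needed: an arbitrary cover $\bigcup X_i$ works, since only one $X_i$ containing $P$ is used.

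With this dictionary, both implications are immediate. Suppose each $R_i$ satisfies the $\Delta_i$-Dixmier-Moeglin equivalence, and let $P\in\Delta\text{-}{\rm spec}\,R$ be $|k|$-separable. Pick $i$ with $P\in X_i$; then $P_i$ is $|k|$-separable in $\Delta\text{-}{\rm spec}\,R_i$, so by Theorem~\ref{thm:main1} applied to $R_i$ it is $\aleph_0$-separable, and transporting back shows $P$ is $\aleph_0$-separable in $\Delta\text{-}{\rm spec}\,R$; Theorem~\ref{thm:main1} applied to $R$ then gives the $\Delta$-Dixmier-Moeglin equivalence. Conversely, suppose $R$ satisfies the $\Delta$-Dixmier-Moeglin equivalence, fix $i$, and let $Q\in\Delta\text{-}{\rm spec}\,R_i$ be $|k|$-separable. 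Transport $Q$ to a $|k|$-separable point $P$ of $X_i\subseteq\Delta\text{-}{\rm spec}\,R$; by Theorem~\ref{thm:main1} for $R$ it is $\aleph_0$-separable, hence so is $Q$ in $\Delta\text{-}{\rm spec}\,R_i$, and since $Q$ was arbitrary, Theorem~\ref{thm:main1} for $R_i$ gives the $\Delta_i$-Dixmier-Moeglin equivalence.

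The proof is essentially a bookkeeping exercise once Theorem~\ref{thm:main1} and Lemma~\ref{lem:localpropertysep} are in hand, so I do not expect a serious obstacle. The one point to be careful about is that Lemma~\ref{lem:localpropertysep} must be invoked with an \emph{infinite} cardinal in both applications (which is why it is crucial that $|k|\ge\aleph_0$), and that each $(R_i,\Delta_i)$ really satisfies the hypotheses of Theorem~\ref{thm:main1} — this is built into the statement. In particular, no compatibility of the differential structures with the homeomorphisms is required, since the criteria (iv)--(v) of Theorem~\ref{thm:main1} are purely topological, respectively order-theoretic.
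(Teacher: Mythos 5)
Your proposal is correct and follows essentially the same route as the paper: reduce via Theorem~\ref{thm:main1} to the purely topological criterion that $|k|$-separability implies $\aleph_0$-separability, then transfer both separability conditions between $\Delta\text{-}{\rm spec}\,R$ and each locally closed piece $X_i\cong\Delta\text{-}{\rm spec}\,R_i$ using Lemma~\ref{lem:localpropertysep} and homeomorphism invariance. Your write-up just makes explicit the two-directional bookkeeping (and the need for infinite cardinals) that the paper's shorter proof leaves implicit.
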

\begin{proof}
By Theorem \ref{thm:main1}, we know $R$ satisfies the $\Delta$-Dixmier-Moeglin equivalence if and only if that any point $P$ in $\Delta\text{-}{\rm spec}\, R$ is $|k|$-separable implies that it is $\aleph_0$-separable. Now by Lemma \ref{lem:localpropertysep}, we know that the $|k|$-separability (resp. $\aleph_0$-separability) can be determined in any locally closed subset of $\Delta\text{-}{\rm spec}\, R$. In particular, $P$ is $|k|$-separable (resp. $\aleph_0$-separable) if and only if $P$ is $|k|$-separable (resp. $\aleph_0$-separable) in any locally closed subset $X_i$ containing $P$. Thus the result follows.
\end{proof}

The $\Delta$-Dixmier-Moeglin equivalence has been established for any differential commutative $k$-algebra $R$ with suitable torus actions by Goodearl \cite{Go06}. In particular, in these cases it is shown that the corresponding $\Delta$-${\rm spec} R$ is a finite disjoint union of locally closed subsets $X_i$, where each $X_i$ is homeomorphic to the prime spectrum of some Laurent polynomial ring. We summarize the above illustration by asserting a local topological criterion when the topological space $\Delta$-${\rm spec} R$ is formed by glueing together finitely many noetherian affine schemes as locally closed subsets.

\begin{corollary}\label{cor:commutative}
Let $(R, \Delta)$ be a commutative noetherian differential $k$-algebra with both $\dim_k R$ and $|\Delta|<|k|$. Suppose that $\Delta$-${\rm spec}\, R=\bigcup X_i$ is a union of locally closed subsets with each $X_i$, when endowed with the subspace topology, homeomorphic to ${\rm spec}\, R_i$ for some noetherian commutative $k$-algebra $R_i$ with $\dim_k R_i<|k|$. Then $R$ satisfies the $\Delta$-Dixmier-Moeglin equivalence.
\end{corollary}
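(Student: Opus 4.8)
The plan is to deduce the corollary from Theorem~\ref{thm:stratification} by putting the trivial differential structure on each $R_i$. Concretely, I would set $\Delta_i:=\{0\}\subseteq \Der_k R_i$, the singleton consisting of the zero derivation. Then $(R_i,\Delta_i)$ is a commutative differential $k$-algebra in which every ideal is (vacuously) a $\Delta_i$-ideal, so $\Delta_i\text{-}{\rm spec}\, R_i$ coincides with ${\rm spec}\, R_i$ as a topological space; moreover $|\Delta_i|=1<|k|$ (as $k$ has characteristic zero) and $\dim_k R_i<|k|$ by hypothesis, while $R_i$ is noetherian. Composing the given homeomorphisms $X_i\cong {\rm spec}\, R_i$ with these identifications, the decomposition $\Delta\text{-}{\rm spec}\, R=\bigcup X_i$ is exactly a decomposition of the type appearing in Theorem~\ref{thm:stratification}. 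Hence that theorem reduces the problem to checking that each $(R_i,\Delta_i)$ satisfies the $\Delta_i$-Dixmier-Moeglin equivalence.

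For the latter, observe that with $\Delta_i=\{0\}$ the $\Delta_i$-core of any ideal $J$ of $R_i$ equals $J$ itself, so the $\Delta_i$-primitive ideals of $R_i$ are precisely its maximal ideals. Since $R_i$ is noetherian with $\dim_k R_i<|k|$, Proposition~\ref{weakPDME} already gives the implications ``locally closed $\Rightarrow$ $\Delta_i$-primitive $\Rightarrow$ $\Delta_i$-rational'' among $\Delta_i$-prime ideals, and Theorem~\ref{thm:rationalprimitive} supplies the converse ``$\Delta_i$-rational $\Rightarrow$ $\Delta_i$-primitive''. It therefore only remains to see that a $\Delta_i$-primitive ideal is locally closed in $\Delta_i\text{-}{\rm spec}\, R_i$; but such an ideal is a maximal ideal of $R_i$, hence a closed point of ${\rm spec}\, R_i=\Delta_i\text{-}{\rm spec}\, R_i$, and closed points are locally closed by Definition-Lemma~\ref{dl:clopen}(i). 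This establishes the $\Delta_i$-Dixmier-Moeglin equivalence for each $i$, and the corollary follows.

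I do not anticipate any real difficulty: all the substance is already contained in Theorems~\ref{thm:rationalprimitive}, \ref{thm:main1} and~\ref{thm:stratification}, and the argument is essentially bookkeeping. The one point that requires a moment's care is the verification that the trivial choice $\Delta_i=\{0\}$ genuinely meets all the hypotheses of Theorem~\ref{thm:stratification}---noetherianity of $R_i$, the two cardinality bounds $\dim_k R_i<|k|$ and $|\Delta_i|<|k|$, and the fact that the given homeomorphism $X_i\cong {\rm spec}\, R_i$ is at the same time a homeomorphism $X_i\cong \Delta_i\text{-}{\rm spec}\, R_i$. Alternatively, one could bypass the intermediate ``$\Delta_i$-Dixmier-Moeglin equivalence'' step and argue directly via Theorem~\ref{thm:main1}: for $(R_i,\{0\})$ the $|k|$-separable points of ${\rm spec}\, R_i$ are exactly the $\Delta_i$-rational ideals, which by the Nullstellensatz are the maximal ideals, and these are closed, hence $\aleph_0$-separable by Lemma~\ref{lem:Zlocallyclosed}; either route works.
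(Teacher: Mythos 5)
Your proposal is correct and follows essentially the same route as the paper: equip each $R_i$ with the trivial differential structure $\Delta_i=\{0\}$, note that $(R_i,\{0\})$ satisfies the $\Delta_i$-Dixmier-Moeglin equivalence (the paper asserts this directly via the ordinary Dixmier-Moeglin equivalence for commutative noetherian $k$-algebras with $\dim_k R_i<|k|$, which you spell out using Proposition~\ref{weakPDME}, Theorem~\ref{thm:rationalprimitive}, and the fact that maximal ideals are closed points), and then apply Theorem~\ref{thm:stratification}. Your added bookkeeping on the hypotheses of that theorem is accurate and harmless.
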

\begin{proof}
We notice that any commutative noetherian $k$-algebra $R_i$ with $\dim_k R_i<|k|$ satisfies Dixmier-Moeglin equivalence. Moreover, when $R_i$ is considered as a differential commutative $k$-algebra with $\Delta=\{0\}$, it automatically satisfies the $\Delta$-Dixmier-Moeglin equivalence. So the result follows from Theorem \ref{thm:stratification}.
\end{proof}

Next, we extend the symplectic core stratification on the maximal spectrum of a complex affine Poisson algebra discussed in \cite[\S 3.3]{BrGor03} to the $\Delta$-prime spectrum of a commutative differential $k$-algebra $(R,\Delta)$.

We define a relation $\sim$ on ${\rm spec}\, R$ by:
$$
P\sim Q~\Longleftrightarrow~(P:\Delta)=(Q:\Delta).
$$
We can easily check that $\sim$ is an equivalence relation on ${\rm spec}\, R$, where we denote the corresponding equivalence class of any $P\in {\rm spec}\, R$ by $\mathscr C_{\rm spec}(P)$, so that
$${\rm spec}\, R~=~ \bigsqcup_{Q\in \Delta\text{-}{\rm spec}\,R}\quad \mathscr C_{\rm spec}(Q)~=~\{P\in {\rm spec}\, R\,|\, (P:\Delta)=Q\}.$$
The set $\mathscr C_{\rm spec}(P)$ is called the {\it prime differential core} (of $P$). Moreover, the {\it (maximal) differential core} of $P$ is denoted by $\mathscr C(P):=\mathscr C_{\rm spec} (P)\cap {\rm max}\, R$ and we have
$${\rm max}\, R~=~ \bigsqcup_{Q\in \Delta\text{-}{\rm prim}\,R}\quad \mathscr C(Q)~=~\{P\in {\rm max}\, R\,|\, (P:\Delta)=Q\}.$$

Notice that there is a continuous retraction $\pi: {\rm spec}\, R\to \Delta\text{-}{\rm spec}\, R$ given by the $\Delta$-core such that $\pi(P)=(P:\Delta)$ for any $P\in {\rm spec}\, R$ and $\Delta\text{-}{\rm spec}\, R$ is a topological quotient of ${\rm spec}\, R$ via $\pi$ \cite[Theorem 1.3]{Go06}. Moreover, the restriction of $\pi$ on ${\rm max}\, R$ gives a continuous surjection denoted by $\pi_m: {\rm max}\, R\to \Delta\text{-}{\rm prim}\, R$ \cite[Theorem 1.5]{Go06}. In general, $\Delta\text{-}{\rm prim}\, R$ is not a topological quotient of ${\rm max}\, R$ via $\pi_m$ unless $R$ satisfies the $\Delta$-Dixmier-Moeglin equivalence.

\begin{proposition}\label{prop:DMECore}
Let $(R,\Delta)$ be a commutative noetherian differential $k$-algebra with both $\dim_k R$ and $|\Delta|<|k|$. Then the following are equivalent:
\begin{enumerate}
\item $R$ satisfies the $\Delta$-Dixmier-Moeglin equivalence.
\item The prime differential core of any $P\in \Delta\text{-}{\rm prim}\, R$ is locally closed in ${\rm spec}\, R$.
\item The differential core of any $P\in \Delta\text{-}{\rm prim}\, R$ is locally closed, and $$\overline{\mathscr C(P)}~=~\left\{Q\in {\rm max}\, R\,|\, P\subseteq Q\right\}$$ in ${\rm max}\, R$.
\end{enumerate}
If the above conditions hold, then each (prime) differential core of $\Delta$-primitive ideals is smooth in its closure.
\end{proposition}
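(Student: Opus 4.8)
The plan is to derive the closing assertion once the equivalence of (1)--(3) is in hand. Fix $P\in\Delta\text{-}{\rm prim}\,R$. Since $P$ is a $\Delta$-prime ideal we have $(P:\Delta)=P$, so $P\in\mathscr C_{\rm spec}(P)$, while $\mathscr C_{\rm spec}(P)\subseteq V(P)$ because $(Q:\Delta)\subseteq Q$; hence $\overline{\mathscr C_{\rm spec}(P)}=\overline{\{P\}}=V(P)={\rm spec}(R/P)$. By (2) the prime differential core is locally closed in ${\rm spec}\,R$, so by Definition-Lemma \ref{dl:clopen}(iii) it is an \emph{open} subscheme $U$ of the integral scheme $X:={\rm spec}(R/P)$; after passing to $R/P$ (on which $\Delta$ acts, with $(Q:\Delta)/P=(Q/P:\Delta)$ whenever $P\subseteq Q$) we may identify $U=\{\mathfrak q\in X\,:\,(\mathfrak q:\Delta)=0\}$. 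The aim is to show $U$ is a regular scheme; this is precisely the statement that the prime differential core is smooth in its closure $V(P)$, and the assertion for $\mathscr C(P)$ will drop out at the end.

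The first real step is to describe the complement $Y:=X\setminus U=\{\mathfrak q\in X\,:\,(\mathfrak q:\Delta)\neq 0\}$, which is closed since $U$ is open. If $\mathfrak q\in Y$ then $\mathfrak c:=(\mathfrak q:\Delta)$ is a nonzero prime $\Delta$-ideal by Lemma \ref{DeltaP}(i), and since $\mathfrak c$ is a $\Delta$-ideal one has $(\mathfrak q':\Delta)\supseteq\mathfrak c\neq 0$, hence $\mathfrak q'\in Y$, for every $\mathfrak q'\in V(\mathfrak c)$; so $V(\mathfrak c)\subseteq Y$. Taking $\mathfrak q$ to be a generic point of an irreducible component of $Y$ then forces $V(\mathfrak q)=V(\mathfrak c)$, i.e. that generic point is itself a nonzero prime $\Delta$-ideal. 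As $R/P$ is Noetherian, $Y$ has finitely many components, so there are finitely many minimal nonzero prime $\Delta$-ideals $\mathfrak p_1,\dots,\mathfrak p_n$ of $R/P$ with $Y=V(\mathfrak p_1)\cup\cdots\cup V(\mathfrak p_n)$; moreover every nonzero prime $\Delta$-ideal lies in $Y$, hence contains some $\mathfrak p_i$, hence contains $\mathfrak a:=\mathfrak p_1\cap\cdots\cap\mathfrak p_n$ (read $\mathfrak a=R/P$ if $n=0$).

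Next I would cover $U$ by differentially simple affines. For $\mathfrak q\in U$ choose $h\in\mathfrak a\setminus\mathfrak q$; then $\mathfrak q\in D(h)$ and $D(h)\cap V(\mathfrak a)=\emptyset$, so $D(h)\subseteq U$. The ring $B:=(R/P)_h$ is a Noetherian domain containing $k\supseteq\mathbb Q$. By Lemma \ref{DeltaP}(ii) the prime $\Delta$-ideals of $B$ correspond to the prime $\Delta$-ideals of $R/P$ avoiding $\{h^m\}$; but every nonzero prime $\Delta$-ideal of $R/P$ contains $h\in\mathfrak a$, so $(0)$ is the unique prime $\Delta$-ideal of $B$, and since every proper $\Delta$-ideal lies in a minimal prime over it — which is a prime $\Delta$-ideal by Lemma \ref{DeltaP}(i) — $B$ is differentially simple. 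Invoking the classical fact that a commutative Noetherian ring which is differentially simple with respect to a set of $\mathbb Q$-linear derivations is regular (this is exactly what underlies the smoothness of symplectic leaves in \cite{BrGor03}), $B$ is regular. Since the $D(h)$ cover $U$, the scheme $U=\mathscr C_{\rm spec}(P)$ is regular, i.e. smooth in its closure $V(P)$ — and honestly smooth over $k$ when $k=\mathbb C$.

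Finally, since $\dim_k R<|k|$ both $R$ and $R/P$ are Jacobson, so $\mathscr C(P)=\mathscr C_{\rm spec}(P)\cap{\rm max}\,R$ is exactly the set of closed points of the open subscheme $U\subseteq X$, and by (3) its closure in ${\rm max}\,R$ equals $\{Q\in{\rm max}\,R\,:\,P\subseteq Q\}={\rm max}(R/P)$; as regularity is a local property, each such closed point is a regular (smooth) point of ${\rm spec}(R/P)$, which is the meaning of ``$\mathscr C(P)$ is smooth in its closure''. I expect the only genuinely delicate input, beyond the $\Delta$-core bookkeeping in the middle two paragraphs, to be the citation of regularity of Noetherian differentially simple $\mathbb Q$-algebras; everything else is a routine combination of Lemma \ref{DeltaP}, Definition-Lemma \ref{dl:clopen}, and standard facts about Jacobson spectra.
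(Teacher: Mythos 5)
You prove only the final assertion of the proposition. Your opening sentence concedes that the equivalence of (i)--(iii) is assumed "in hand", and nothing in the body addresses it; but that three-way equivalence is the main content of the statement and occupies essentially all of the paper's proof. The paper argues (i)$\Rightarrow$(ii) by using that $\Delta\text{-}{\rm spec}\,R$ is a topological quotient of ${\rm spec}\,R$ via $\pi(Q)=(Q:\Delta)$, so $\mathscr C_{\rm spec}(P)=\pi^{-1}(P)$ is locally closed; (ii)$\Rightarrow$(i) and (iii)$\Rightarrow$(i) by translating local closedness through Lemma \ref{lem:equilocallyclosed} and Lemma \ref{DeltaP}(iv) and then invoking Theorem \ref{thm:main1}; and (i)$\Rightarrow$(iii) by producing the ideal $I=\bigcap\{Q\in\Delta\text{-}{\rm spec}\,R : Q\supsetneq P\}$ and using the Jacobson property of $R$ to obtain the closure formula $\overline{\mathscr C(P)}=\{Q\in{\rm max}\,R : P\subseteq Q\}$. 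None of these implications appears in your write-up, so as a proof of the proposition as stated it is incomplete; this is the genuine gap.

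The part you do treat is essentially correct, and it takes a different route from the paper's. You use (ii) to view $\mathscr C_{\rm spec}(P)$ as an open subset of ${\rm spec}(R/P)$, identify its complement as $V(\mathfrak a)$ with $\mathfrak a$ the intersection of the finitely many minimal nonzero prime $\Delta$-ideals of $R/P$ (your argument that the generic points of the components are themselves prime $\Delta$-ideals is fine, via Lemma \ref{DeltaP}(i)), cover the core by the $\Delta$-simple localizations $(R/P)_h$ with $h\in\mathfrak a$, and then quote the theorem that a Noetherian differentially simple ring containing $\mathbb Q$ is regular. The paper instead argues in one line: the defining ideal $J$ of the singular locus of $\overline{\mathscr C(P)}$ is a $\Delta$-ideal properly containing $P$, so no maximal ideal $Q$ with $(Q:\Delta)=P$ can contain $J$. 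Your route buys independence from the $\Delta$-stability (and closedness) of the singular locus, at the price of a substantial classical input that you cite only loosely ("what underlies \cite{BrGor03}"); you should give a precise reference, and you should also spell out the reduction to the local case (a nonzero proper $\Delta$-ideal of $B_{\mathfrak q}$ contracts to a nonzero proper $\Delta$-ideal of the domain $B$, so each $B_{\mathfrak q}$ is again differentially simple). With those two points fixed, your paragraphs would be an acceptable alternative proof of the last sentence only; the equivalences (i)--(iii) still need to be proved.
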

\begin{proof}
(i)$\Rightarrow$(ii) We know $P$ is locally closed in $\Delta$-${\rm spec}\, R$. Since $\Delta$-${\rm spec}\, R$ is a topological quotient of ${\rm spec}\, R$ via $\pi$, we have $\pi^{-1}(P)=\mathscr C_{\rm spec}(P)$ is locally closed in ${\rm spec}\, R$

(ii)$\Rightarrow$(i) Since $P\in \mathscr C_{\rm spec}(P)$, we know that $\overline{\mathscr C_{\rm spec}(P)}=\{Q\in {\rm spec}\, R\,|\, P\subseteq Q\}$.
By Lemma \ref{lem:equilocallyclosed}, that $\mathscr C_{\rm spec}(P)$ is locally closed implies that there is some ideal $I$ properly containing $P$ such that
$$\{Q\in {\rm spec}\, R\,|\, P\subseteq Q\}\setminus \{Q\in {\rm spec}\, R\,|\, (Q:\Delta)=P\}=\{Q\in {\rm spec}\, R\,|\, I\subseteq Q\}.$$
Let $Q$ be any $\Delta$-prime ideal properly containing $P$. Since $P\subsetneq (Q:\Delta)=Q$, we have $I\subseteq Q$. By Lemma \ref{lem:equilocallyclosed}, we know $P$ is locally closed in $\Delta\text{-}{\rm spec}\, R$. Hence $R$ satisfies the $\Delta$-Dixmier-Moeglin equivalence by Theorem \ref{thm:main1}.

(i)$\Rightarrow$(iii) Let $I$ be the intersection of all $\Delta$-prime ideals properly containing $P$. Since $P$ is locally closed in $\Delta$-${\rm spec}\, R$, we have $P\subsetneq I$ and
\begin{multline*}
\mathscr C(P)=\{Q\in {\rm max}\, R\,|\, P\subseteq (Q:\Delta)\}\setminus \{Q\in {\rm max}\, R\,|\, P\subsetneq (Q:\Delta)\}\\
=\{Q\in {\rm max}\, R\,|\, P\subseteq Q\}\setminus \{Q\in {\rm max}\, R\,|\, I\subseteq Q\}
\end{multline*}
So $\mathscr C(P)$ is locally closed in ${\rm max}\, R$. Moreover since $R$ is a Jacobson ring, we have
$$
P~=~\bigcap_{P\subseteq Q\in {\rm max}\, R}\ Q~=~\left(\bigcap_{Q\in \mathscr C(P)}\ Q\right)\bigcap \left(\bigcap_{I\subseteq Q\in {\rm max}\, R}\ Q\right).
$$
Since $P$ is prime and $P\subsetneq I\subseteq \bigcap_{I\subseteq Q\in {\rm max}\, R} Q$, we have $\bigcap_{Q\in \mathscr C(P)} Q=P$ and 
$$\overline{\mathscr C(P)}~=~\left\{Q\in {\rm max}\, R\,\left |\, \left(\bigcap_{M\in \mathscr C(P)}\ M\right)\subseteq Q\right\}\right.~=~\left\{Q\in {\rm max}\, R\,|\, P\subseteq Q\right\}.$$

(iii)$\Rightarrow$(i) By Theorem \ref{thm:main1}, it suffices to show that any $\Delta$-primitive ideal $P$ of $R$ is locally closed in $\Delta\text{-}{\rm spec}\, R$. Since $\mathscr C(P)$ is open in $\overline{\mathscr C(P)}=\{Q\in {\rm max}\, R\,|\, P\subseteq Q\}$, there is some $\Delta$-ideal $I$ properly containing $P$ such that
$$\overline{\mathscr C(P)}\setminus \mathscr C(P)~=~\{Q\in {\rm max}\, R\,|\, P\subsetneq (Q:\Delta)\}~=~\{Q\in {\rm max}\, R\,|\, I\subseteq Q\}.$$
For any $\Delta$-prime ideal $L$ properly containing $P$, by Lemma \ref{DeltaP}(iv), we have
$$
L~=~\bigcap_{L\subseteq Q\in \Delta\text{-}{\rm prim}\, R}\ Q~=~\bigcap_{L\subseteq M\in {\rm max}\, R}\ (M:\Delta)~\supseteq~(I:\Delta)~=~I~\supsetneq~P.
$$
So $P$ is locally closed in $\Delta\text{-}{\rm spec}\, R$.

Finally, let $J$ be the defining ideal of the singular locus of $\overline{\mathscr C(P)}$ (resp. $\overline{\mathscr C_{\rm spec}(P)}$). Since $J$ is a $\Delta$-ideal properly containing $P$, it follows from the definition of $\Delta$-core that any maximal ideal $Q$ in $\mathscr C(P)$  does not contain $J$. Hence the point corresponding to $Q$ belongs to the smooth locus of $\overline{\mathscr C(P)}$, as required.
\end{proof}

Finally, suppose $G$ is an affine algebraic group over $k$. Let $G$ act on $R$ by $k$-algebra automorphisms. An {\it $G$-ideal} of $R$ is any ideal $I$ of $R$ that is stable under $G$. For any ideal $I$ in $R$, let $(I:G)$ denote the largest $G$-ideal of $R$ contained in $I$, that is,
$$(I:G)~=~\bigcap_{g\in G}\ g(I)~=~\left\{r\in R\,|\, g(r)\in I\ ,\forall \ g\in G\right\}.$$
A proper $G$-ideal $I$ of $R$ is called {\it $G$-prime} if $JK\subseteq I$ for any two $G$-ideals $J,K$ of $R$ implies that either $J\subseteq I$ or $K\subseteq I$. In particular, $(P:G)$ is $G$-prime for any prime ideal $P$. A $(G,\Delta)$-ideal of $R$ is any ideal that is both a $G$-ideal and a $\Delta$-ideal. We recall the notion of {\it $(G, \Delta)$-prime ideal} where the primeness is defined in the reign of all $(G,\Delta)$-ideals, and we write $(G, \Delta)$-${\rm spec}\, R$ for the $(G, \Delta)$-spectrum of $R$, that is, the set of all $(G, \Delta)$-prime ideals of $R$, equipped with the natural Zariski topology.

Now we introduce the concept of rational action of $G$ on $(R,\Delta)$; it generalizes the notion of a rational torus action on $(R,\Delta)$ in \cite{Go06}.

\begin{definition}\label{def:rational}
Let $(R, \Delta)$ be a differential $k$-algebra, and $G$ an affine algebraic group over $k$. Suppose $G$ acts on $R$ by $k$-algebra automorphisms. The action of $G$ on $(R,\Delta)$ is {\it rational} if
\begin{enumerate}
\item $R$ is a directed union of finite-dimensional $G$-invariant $k$-subspaces $V_i$ such that the restriction maps $G\to {\rm Aut}(R)\to {\rm GL}(V_i)$ are morphisms of algebraic varieties.
\item The $k$-linear span of $\Delta$ is a directed union of finite dimensional $G$-invariant $k$-subspaces of $\Der_k\, R$ under the induced $G$-action such that $g.\delta=g\circ \delta\circ g^{-1}$ for any $\delta\in \Der_k\, R$ and $g\in G$.
\end{enumerate}
\end{definition}

The following lemma is based on \cite[Lemma 3.1]{Go06}, where we extend any rational action of an algebraic torus to that of an affine connected algebraic group.

\begin{lemma}\label{lem:GPideal}
Let $(R,\Delta)$ be a commutative noetherian differential $k$-algebra and $G$ an affine algebraic group acting rationally on $(R,\Delta)$.
\begin{enumerate}
\item If $I$ is a $\Delta$-ideal of $R$, then $(I:G)$ is a $(G,\Delta)$-ideal. Similarly, if $I$ is a $G$-ideal, then $(I:\Delta)$ is a $(G,\Delta)$-ideal.
\item $(P:G)\in (G, \Delta)$-${\rm spec}\, R$  for al $P\in \Delta$-${\rm spec}\, R$.
\item Assume that $G$ is connected. The following sets coincide:
\begin{enumerate}
\item $(G,\Delta)$-${\rm spec}\, R$;
\item The set of all $G$-prime $\Delta$-ideals in $R$;
\item The set of all $\Delta$-prime $G$-ideals in $R$;
\item The set of all prime $(G,\Delta)$-ideals in $R$.
\end{enumerate}
\end{enumerate}
\end{lemma}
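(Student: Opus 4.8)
The plan is to prove Lemma~\ref{lem:GPideal} in three parts, each building on the preceding one, and exploiting the fact that a connected algebraic group over a field of characteristic zero acts on a finite-dimensional vector space through a connected subgroup of $\GL(V_i)$.

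\medskip

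\emph{Part (1).} Suppose $I$ is a $\Delta$-ideal of $R$. I would first check $(I:G)=\bigcap_{g\in G} g(I)$ is a $G$-ideal — this is immediate from the definition, since $G$ permutes the ideals $g(I)$. To see it is also a $\Delta$-ideal, fix $\delta\in\Delta$ and $r\in (I:G)$. For each $g\in G$ I need $g(\delta(r))\in I$; using the rationality hypothesis $g.\delta=g\circ\delta\circ g^{-1}\in \mathrm{span}_k(\Delta)$, I can write $g(\delta(r))=(g.\delta)(g(r))$. Since $g(r)\in I$ and $I$ is invariant under the $k$-span of $\Delta$ (a $\Delta$-ideal is automatically a $\mathrm{span}_k(\Delta)$-ideal because derivations are $k$-linear), we get $g(\delta(r))\in I$ for all $g$, hence $\delta(r)\in(I:G)$. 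The symmetric statement — if $I$ is a $G$-ideal then $(I:\Delta)$ is a $G$-ideal — follows the same way: for $g\in G$ and $r\in(I:\Delta)$, the element $\delta_1\cdots\delta_n(g^{-1}(r))$ equals $(g^{-1}.\delta_1)\cdots(g^{-1}.\delta_n)$ applied to... more cleanly, $g^{-1}\big(\delta_1\cdots\delta_n(r)\big) = (g^{-1}.\delta_1)\cdots(g^{-1}.\delta_n)(g^{-1}(r))$, and one checks directly that $r\in(I:\Delta)$ forces $g(r)\in(I:\Delta)$ by conjugating through. That $(I:\Delta)$ is obviously a $\Delta$-ideal, and $(I:G)$ obviously a $G$-ideal, completes Part (1).

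\medskip

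\emph{Part (2).} Let $P\in\Delta\text{-}{\rm spec}\,R$. By Lemma~\ref{DeltaP}(i) (and the noetherian hypothesis so that $\Delta$-prime equals prime $\Delta$-ideal), $P$ is a prime $\Delta$-ideal. Then $(P:G)$ is a $(G,\Delta)$-ideal by Part (1), and it is a $G$-prime ideal in the ordinary sense — this is the standard fact recalled in the paragraph before Definition~\ref{def:rational} that $(P:G)$ is $G$-prime for any prime $P$. A $G$-prime ideal that is also a $\Delta$-ideal is in particular prime among all $(G,\Delta)$-ideals (since any two $(G,\Delta)$-ideals are $G$-ideals), so $(P:G)\in(G,\Delta)\text{-}{\rm spec}\,R$.

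\medskip

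\emph{Part (3).} Assume $G$ is connected. The four sets each sit inside $\Spec R$ and I must show they coincide. Clearly (d)$\subseteq$(b), (d)$\subseteq$(c), and both (b) and (c) are contained in (a). For (a)$\subseteq$(d) — the crux — I would argue that under a rational action of a \emph{connected} group, every $(G,\Delta)$-ideal that is prime among $(G,\Delta)$-ideals is in fact genuinely prime: the key point, following \cite[Lemma 3.1]{Go06}, is that for a connected group $G$ acting rationally, a $(G,\Delta)$-prime ideal $I$ has the property that $R/I$ has no nonzero $G$-invariant zerodivisors in a suitable sense, and one upgrades this using that the minimal primes over $I$ are permuted by the connected group $G$, hence each is individually $G$-stable, so the radical of $I$ is a $G$-ideal and in fact each minimal prime is a $(G,\Delta)$-ideal (its $\Delta$-stability because $G$ connected forces $\Delta$ to preserve each minimal prime too — derivations can't permute a finite set of primes nontrivially in characteristic zero, by the standard argument that $\delta$ integrates to an automorphism-like flow or more simply that $\delta$ maps each minimal prime into itself when there are finitely many). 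Then $(G,\Delta)$-primeness of $I$ forces $I$ to equal one of its minimal primes, so $I$ is prime, giving (a)$\subseteq$(d). Combined with Part (1) — which shows that for a prime $\Delta$-ideal, taking $(-:G)$ lands in $(G,\Delta)$-ideals, and symmetrically — one then closes the cycle: a $G$-prime $\Delta$-ideal, being prime once we know (a)$=$(d), lies in (d); similarly for (c).

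\medskip

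The main obstacle I anticipate is Part (3), specifically the step that a $(G,\Delta)$-prime ideal is honestly prime when $G$ is connected. This requires two ingredients working together: that a connected algebraic group over a characteristic-zero field acts on the finite set of minimal primes over a semiprime ideal trivially (so each minimal prime is $G$-stable), and the parallel fact that the derivations in $\Delta$ also stabilize each minimal prime — the latter being precisely the content that makes the passage from "prime $\Delta$-ideal" to "$\Delta$-prime ideal" reversible (Lemma~\ref{DeltaP} uses noetherianity for this). I would lean on \cite[Lemma 3.1]{Go06} for the torus case and observe that the only property of the torus used there is connectedness together with the rationality of the action, both of which are preserved in our hypotheses; the finite-dimensional $G$-invariant subspaces $V_i$ from Definition~\ref{def:rational}(1) let one reduce any orbit question to the algebraic group $\mathrm{image}(G\to\GL(V_i))$, which is connected, so it fixes every point of a finite $G$-set. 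Everything else is bookkeeping with the conjugation formula $g.\delta=g\circ\delta\circ g^{-1}$ from Definition~\ref{def:rational}(2).
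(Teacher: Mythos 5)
Your proposal is correct and takes essentially the same route as the paper: parts (1) and (2) are the same conjugation computations using the $G$-invariance of ${\rm span}_k(\Delta)$, and the crux of part (3) --- that the finitely many minimal primes over a $(G,\Delta)$-prime ideal are $\Delta$-stable (Lemma \ref{DeltaP}) and $G$-stable by connectedness and rationality (the finite-orbit argument, for which the paper simply cites \cite[Proposition II.2.9]{BrGo}), so the product argument forces primeness --- is exactly the paper's argument for (a) $\subseteq$ (d). The only organizational difference is that the paper obtains the coincidence of (b), (c), (d) directly from Chin's theorem that $G$-prime ideals are prime \cite{Chin} together with \cite[Lemma 1.1(d)]{Go06}, whereas you deduce it from (a) $=$ (d) and the trivial inclusions, which works equally well and spares the extra citation.
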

\begin{proof}
(i) Let $I$ be a $\Delta$-ideal of $R$. Then we have
$$g(\Delta(I:G))~=~g\circ \Delta\circ g^{-1}(g(I:G))~\subseteq ~g.\Delta(I)~\subseteq ~\Delta(I)~\subseteq ~I$$
for all $g\in G$. So $\Delta(I:G)\subseteq (I:G)$ and $(I:G)$ is a $(G,\Delta)$-ideal. Similarly for any $G$-ideal $I$, we have
\begin{align*}
\delta_1\cdots \delta_ng(I:\Delta)~&=~g(g^{-1}\delta_1g)\cdots (g^{-1}\delta_ng)(I:\Delta)~=~g(g^{-1}.\delta_1)\cdots (g^{-1}.\delta_n)(I:\Delta)\\
~&\subseteq~g(I:\Delta)~\subseteq~g(I)~\subseteq~I
\end{align*}
for any $\delta_1,\ldots,\delta_n\in \Delta$ and $g\in G$. This implies that $g(I:\Delta)\subseteq (I:\Delta)$ for any $g\in G$. So $(I:\Delta)$ is a $(G,\Delta)$-ideal.

(ii) For any $P\in \Delta$-${\rm spec}\, R$, it is clear that $(P:G)$ is a $G$-prime ideal. By (i), $(P:G)$ is a $(G,\Delta)$-ideal, and we can see that it is also $(G,\Delta)$-prime since it is  $G$-prime.

(iii) By \cite[Corollary 1.3]{Chin}, any $G$-prime ideal of $R$ is prime. Moreover, by \cite[Lemma 1.1(d)]{Go06}, any $\Delta$-prime ideal of $R$ is also prime. So the sets (b), (c), and (d) coincide. It is clear that (d) $\subseteq$ (a).

Finally, let $Q$ be a $(G,\Delta)$-prime ideal. By Lemma \ref{DeltaP}(iii), there are only finite many minimal prime ideals over $Q$ and all of them are $\Delta$-ideals.  Say $Q_1,\ldots,Q_n$ are those minimal primes over $Q$. Since $G$ must permute those $Q_i$'s, we know their $G$-orbits are finite. Then \cite[Proposition II.2.9]{BrGo} implies that $Q_1,\ldots,Q_n$ are all $G$-ideals and hence are all $(G,\Delta)$-ideals. Since $Q_1\cdots Q_n\subseteq Q$, we have $Q=Q_i$ for some $i$ since $Q$ is $(G,\Delta)$-prime. Therefore (a) $\subseteq$ (d).
\end{proof}

As a consequence, there exists a corresponding $G$-stratification of $\Delta$-${\rm spec}\, R$ such that
$$
\Delta\text{-}{\rm spec}\, R~=~\bigsqcup_{J\in (G, \Delta)\text{-}{\rm spec\, R}}\ \Delta\text{-}{\rm spec}_J\,R,
$$
where
$$\Delta\text{-}{\rm spec}_J\,R~=~\left\{P\in \Delta\text{-}{\rm spec}\, R : (P:G)=J\right\}\ \text{for all}\ J\in (G, \Delta)\text{-}{\rm spec}\, R.$$
Now suppose $(G, \Delta)\text{-}{\rm spec}\, R$ is finite. We can list the $(G,\Delta)$-prime ideals as $J_1,\ldots,J_n$. Therefore, we have
$$\Delta\text{-}{\rm spec}_{J_i}R~=~\{P\in \Delta\text{-}{\rm spec}\, R: P\supseteq J_i\}\setminus \bigcup_{J_i\subsetneqq J_j} \{P\in \Delta\text{-}{\rm spec}\, R: P\supseteq J_j\}.$$
Hence each $\Delta\text{-}{\rm spec}_{J_i}R$ is locally closed in $\Delta\text{-}{\rm spec}\, R$. So we can apply our local criterion of $\Delta$-Dixmier-Moeglin equivalence.

\begin{proposition}\label{prop:finiteGprime}
Let $(R,\Delta)$ be a commutative noetherian differential $k$-algebra with both $\dim_k R$ and $|\Delta|<|k|$, and $G$ an affine connected algebraic group acting rationally on $(R,\Delta)$. Suppose the set $(G, \Delta)\text{-}{\rm spec}\, R$ is finite and for each $J\in (G, \Delta)\text{-}{\rm spec}\, R$, we have $\Delta\text{-}{\rm spec}_J\,R$ is homeomorphic to ${\rm spec}\, R_J$ for some noetherian commutative $k$-algebra $R_J$ with $\dim_k R_J<|k|$. Then $R$ satisfies the $\Delta$-Dixmier-Moeglin equivalence.
\end{proposition}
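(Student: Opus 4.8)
The plan is to reduce the statement to Corollary \ref{cor:commutative}. That corollary already says: if $\Delta\text{-}{\rm spec}\,R$ is a union of locally closed subsets, each homeomorphic (in the subspace topology) to ${\rm spec}\,R_i$ for a noetherian commutative $k$-algebra $R_i$ with $\dim_k R_i<|k|$, then $R$ satisfies the $\Delta$-Dixmier-Moeglin equivalence. The hypotheses of the proposition supply the homeomorphisms $\Delta\text{-}{\rm spec}_{J}\,R\cong{\rm spec}\,R_J$ directly, so the only thing left to check is that the $G$-strata $\{\Delta\text{-}{\rm spec}_{J}\,R\}_{J\in(G,\Delta)\text{-}{\rm spec}\,R}$ are finitely many locally closed subsets that cover $\Delta\text{-}{\rm spec}\,R$.

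For this, first I would invoke Lemma \ref{lem:GPideal}: since $G$ is connected and acts rationally on $(R,\Delta)$, the map $P\mapsto(P:G)$ carries $\Delta\text{-}{\rm spec}\,R$ onto $(G,\Delta)\text{-}{\rm spec}\,R$ and produces the disjoint decomposition $\Delta\text{-}{\rm spec}\,R=\bigsqcup_{J}\Delta\text{-}{\rm spec}_{J}\,R$. By the finiteness assumption we may list $(G,\Delta)\text{-}{\rm spec}\,R=\{J_1,\dots,J_n\}$, and ordering these by inclusion gives
$$\Delta\text{-}{\rm spec}_{J_i}\,R~=~\{P\in\Delta\text{-}{\rm spec}\,R:P\supseteq J_i\}\setminus\bigcup_{J_i\subsetneqq J_j}\{P\in\Delta\text{-}{\rm spec}\,R:P\supseteq J_j\},$$
which exhibits $\Delta\text{-}{\rm spec}_{J_i}\,R$ as the intersection of the closed set $\{P:P\supseteq J_i\}$ with the complement of the \emph{finite} union $\bigcup_{J_i\subsetneqq J_j}\{P:P\supseteq J_j\}$, hence an open set, in the Zariski space $\Delta\text{-}{\rm spec}\,R$. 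So each $G$-stratum is locally closed. Combining this with the given homeomorphisms $\Delta\text{-}{\rm spec}_{J_i}\,R\cong{\rm spec}\,R_{J_i}$ puts us exactly in the situation of Corollary \ref{cor:commutative}, and we conclude that $R$ satisfies the $\Delta$-Dixmier-Moeglin equivalence. (Alternatively one can route through Theorem \ref{thm:stratification} after noting, as in the proof of Corollary \ref{cor:commutative}, that each $R_{J_i}$ satisfies the $\{0\}$-Dixmier-Moeglin equivalence.)

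The only genuinely load-bearing step — and thus the main point to get right — is the local closedness of the strata $\Delta\text{-}{\rm spec}_{J_i}\,R$, which is precisely where the hypothesis that $(G,\Delta)\text{-}{\rm spec}\,R$ be \emph{finite} is used: it is what makes the subtracted union in the display above a finite, hence closed, set. Without finiteness one would only obtain that the strata are countable (or worse) unions of locally closed sets, and the appeal to Corollary \ref{cor:commutative}/Theorem \ref{thm:stratification} would break down. Everything else is a formal bookkeeping exercise using Lemma \ref{lem:GPideal} and the machinery of Sections \ref{S:M} and \ref{S:S}.
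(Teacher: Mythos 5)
Your proposal is correct and follows essentially the same route as the paper: the paper's proof is exactly the combination of the preceding discussion (the decomposition $\Delta\text{-}{\rm spec}\,R=\bigsqcup_J \Delta\text{-}{\rm spec}_J\,R$ from Lemma \ref{lem:GPideal} and the local closedness of each stratum via the same displayed formula, using finiteness of $(G,\Delta)\text{-}{\rm spec}\,R$) with Theorem \ref{thm:stratification}. Routing the final step through Corollary \ref{cor:commutative} rather than citing Theorem \ref{thm:stratification} directly is an immaterial difference, since that corollary is itself the same application of the stratification theorem.
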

\begin{proof}
It follows from Theorem \ref{thm:stratification} together with the discussion above.
\end{proof}

\begin{example}
Let $G=(k^\times)^r$ be an algebraic torus acting rationally on a commutative noetherian differential $k$-algebra $(R,\Delta)$ with both $\dim_k R$ and $|\Delta|<|k|$. By \cite[Theorem 3.2]{Go06}, we known each $\Delta\text{-}{\rm spec}_{J}R$ is homeomorphic to ${\rm spec}\, R_J$, where $R_J$ is a Laurent polynomial ring in at most $r$ indeterminates over the fixed field $Z_\Delta(\Frac R/J)^G$. Now suppose $R$ has only finitely many $(G,\Delta)$-ideals. Our Proposition \ref{prop:finiteGprime} then yields that $R$ satisfies the $\Delta$-Dixmier-Moeglin equivalence with a slightly different assumption on $(R,\Delta)$ compared to \cite[Theorem 3.3]{Go06} .
\end{example}

\section{Applications to the Poisson Dixmier-Moeglin equivalence}\label{S:P}
In our last section, we give some applications to complex affine Poisson algebras. Let $(A,\{-,-\})$ be an affine complex Poisson algebra. We can see that $(A,\Delta_A)$ is indeed a commutative differential $\mathbb C$-algebra with $\Delta_A:=\{\{a,-\}\,|\, a\in A\}$ consisting of all Hamiltonian derivations of $A$. Since $A$ is finitely generated algebra over $\mathbb C$, we can always choose $\Delta_A$ to be a finite set only consisting of those Hamiltonian derivations corresponding to a finite set of generators of $A$. Moreover, $\Delta$-ideals, $\Delta$-prime ideals, and $\Delta$-primitive ideals of $A$ are exactly Poisson ideals, Poisson prime ideals, and Poisson primitive ideals of $A$, respectively. In particular, $\Delta$-${\rm spec}\,A={\rm P. spec}\, A$ is the Poisson prime spectrum of $A$ and $\Delta$-${\rm prim}\,A={\rm P. prim}\, A$ is the subset of ${\rm P. spec}\, A$ consisting of Poisson primitive ideals with the subspace topology. We use $\mathcal P (I):=(I:\Delta_A)$ to denote the Poisson core of any ideal $I$ in $A$. Finally, the $\Delta$-Dixmier-Moeglin equivalence holds for the commutative differential $\mathbb C$-algebra $(A,\Delta_A)$ if and only if the Poisson Dixmier-Moeglin equivalence holds for the Poisson algebra $A$.

Our main result Theorem \ref{thm:rationalprimitive} yields another proof of \cite[Theorem 3.2]{BLLM17}.

\begin{theorem}\label{thm:Prationalprimitive}
Let $A$ be a complex affine Poisson algebra. For any Poisson prime ideal $P$, the following are equivalent:
\begin{enumerate}
\item $P$ is Poisson primitive;
\item $P$ is Poisson rational;
\item $P$ is $|\mathbb C|$-separable in ${\rm P. spec}\ A$.
\end{enumerate}
\end{theorem}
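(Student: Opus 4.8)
The plan is to deduce this directly from Theorem~\ref{thm:rationalprimitive} applied to the commutative differential $\mathbb{C}$-algebra $(A,\Delta_A)$. As invoked in the opening of this section, fixing a finite generating set of $A$ one obtains a finite set $\Delta_A$ of Hamiltonian derivations for which $(A,\Delta_A)$ is a commutative differential $\mathbb{C}$-algebra with $\Delta$-ideals $=$ Poisson ideals, $\Delta$-prime ideals $=$ Poisson prime ideals, $\Delta$-primitive ideals $=$ Poisson primitive ideals, and $\Delta\text{-}{\rm spec}\, A = {\rm P. spec}\, A$ as topological spaces. Moreover, since every member of $\Delta_A$ is Hamiltonian, $Z_{\Delta_A}(\Frac(A/P)) = Z_P(\Frac(A/P))$ for each Poisson prime $P$, and as $\mathbb{C}$ is algebraically closed this field is algebraic over $\mathbb{C}$ if and only if it equals $\mathbb{C}$; hence ``$\Delta$-rational'' coincides with ``Poisson rational''. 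Under these identifications, conditions (i), (ii), (iii) of the present theorem are literally conditions (i), (ii), (iii) of Theorem~\ref{thm:rationalprimitive} taken with $k = \mathbb{C}$, $R = A$, $\Delta = \Delta_A$.

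It then remains to confirm the standing hypotheses of Theorem~\ref{thm:rationalprimitive}. The algebra $A$ is noetherian by the Hilbert basis theorem, being finitely generated over $\mathbb{C}$; the chosen $\Delta_A$ is finite, so $|\Delta_A| < |\mathbb{C}|$; and $A$ is a countably-dimensional $\mathbb{C}$-vector space (it is spanned by the monomials in a finite generating set), so $\dim_{\mathbb{C}} A \le \aleph_0 < 2^{\aleph_0} = |\mathbb{C}|$. Thus the ``large base field'' requirement $\dim_k R < |k|$ is automatic over $\mathbb{C}$, and every hypothesis of Theorem~\ref{thm:rationalprimitive} is in force.

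With this, Theorem~\ref{thm:rationalprimitive} gives the equivalence of (i), (ii), (iii) for every $\Delta_A$-prime ideal of $A$, which by the dictionary above is exactly the asserted equivalence for every Poisson prime ideal $P$ of $A$; in particular this reproves \cite[Theorem~3.2]{BLLM17}. I do not expect a genuine obstacle here: the only points calling for any care are that the passage to a finite $\Delta_A$ leaves all of the Poisson notions unchanged (a consequence of the Leibniz rule, as used in the opening of this section) and that $\Delta$-rationality agrees with Poisson rationality (using that $\mathbb{C}$ is algebraically closed). Once these are noted, the statement is a direct specialization of Theorem~\ref{thm:rationalprimitive}.
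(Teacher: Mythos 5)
Your proof is correct and follows essentially the same route as the paper's: both specialize Theorem~\ref{thm:rationalprimitive} to the commutative differential $\mathbb{C}$-algebra $(A,\Delta_A)$ with $\Delta_A$ a finite set of Hamiltonian derivations attached to a finite generating set, using $|\Delta_A|\le \dim_{\mathbb{C}} A\le \aleph_0<|\mathbb{C}|$. Your additional remarks --- that the finite choice of $\Delta_A$ leaves the Poisson notions unchanged and that $\Delta$-rationality coincides with Poisson rationality because $\mathbb{C}$ is algebraically closed --- merely make explicit what the paper leaves implicit in its opening discussion of Section~\ref{S:P}.
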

\begin{proof}
As discussed above, $(A,\Delta_A)$ is a commutative differential complex affine algebra, where $\Delta_A$ can be chosen as a finite set. Hence we know $|\Delta_A|\le \dim_\mathbb C\,A\le \aleph_0<\aleph=|\mathbb C|$. As a consequence, the equivalences follow from Theorem \ref{thm:rationalprimitive}.
\end{proof}

Our first topological description of the Poisson Dixmier-Moeglin equivalence for any complex affine Poisson algebra $A$ is given in terms of the poset $({\rm P. spec}\, A,\subseteq)$. 

\begin{theorem}
\label{thm:Poissonmain1}
Let $A$ be a complex affine Poisson algebra. Then the following are equivalent.
\begin{enumerate}
\item $A$ satisfies the Poisson Dixmier-Moeglin equivalence.
\item Every Poisson primitive ideal of $A$ is locally closed in ${\rm P. spec}\, A$.
\item Every Poisson rational ideal of $A$ is locally closed in ${\rm P. spec}\, A$.
\item In the poset $({\rm P. spec}\, A,\subseteq)$, that a Poisson prime ideal is $|\mathbb C|$-separable implies that it is $\aleph_0$-separable.
\end{enumerate}
\end{theorem}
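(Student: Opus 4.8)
The plan is to obtain Theorem \ref{thm:Poissonmain1} as a direct specialization of the general differential-algebra result, Theorem \ref{thm:main1}, applied to the commutative differential $\mathbb C$-algebra $(A,\Delta_A)$, where $\Delta_A$ is the finite set of Hamiltonian derivations $\{a,-\}$ attached to a fixed finite generating set of $A$. First I would verify the hypotheses of Theorem \ref{thm:main1}: $A$ is noetherian since it is affine over $\mathbb C$; $\dim_{\mathbb C} A \le \aleph_0 < |\mathbb C|$; and $|\Delta_A|$ is finite, hence strictly less than $|\mathbb C|$. These are exactly the standing assumptions that the general theorem requires.

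Next I would invoke the dictionary already set up at the beginning of Section \ref{S:P}: an ideal of $A$ is a $\Delta_A$-ideal precisely when it is a Poisson ideal (stability under the Hamiltonian derivations associated to a generating set forces stability under all Hamiltonian derivations, by the biderivation property of the bracket), so ${\rm P. spec}\, A = \Delta_A\text{-}{\rm spec}\, A$, ${\rm P. prim}\, A = \Delta_A\text{-}{\rm prim}\, A$, and $Z_P(\Frac(A/P)) = Z_{\Delta_A}(\Frac(A/P))$ for every Poisson prime $P$. Consequently the properties ``Poisson primitive'', ``Poisson rational'', and ``$|\mathbb C|$-separable in the poset $({\rm P. spec}\, A,\subseteq)$'' coincide with their $\Delta_A$-analogues, and the Poisson Dixmier-Moeglin equivalence for $A$ is exactly the $\Delta_A$-Dixmier-Moeglin equivalence for $(A,\Delta_A)$.

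With the dictionary in place, the four stated equivalences (1)$\Leftrightarrow$(2)$\Leftrightarrow$(3)$\Leftrightarrow$(4) are simply the equivalences (i)$\Leftrightarrow$(iii)$\Leftrightarrow$(ii)$\Leftrightarrow$(v) of Theorem \ref{thm:main1}, transported through this dictionary, using Lemma \ref{lem:equisep} to identify the partial order on $\Delta_A\text{-}{\rm spec}\, A$ with inclusion of Poisson primes so that condition (4) here is literally condition (v) there. I do not expect any genuine obstacle: the only point needing a word of justification is that replacing the full Poisson bracket by the finite derivation set $\Delta_A$ changes none of the relevant notions, which is the familiar fact that a Poisson structure is determined by its restriction to an algebra generating set. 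One could also note in passing that Theorem \ref{thm:Prationalprimitive} already supplies the pointwise equivalence ``primitive $\Leftrightarrow$ rational $\Leftrightarrow$ $|\mathbb C|$-separable'', so the substantive additional input from Theorem \ref{thm:main1} is the reformulation in terms of ``locally closed'' together with the passage to the global statement quantified over all Poisson primitive or rational ideals.
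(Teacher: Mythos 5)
Your proposal is correct and follows essentially the same route as the paper: the authors likewise prove Theorem \ref{thm:Poissonmain1} by viewing $A$ as the commutative differential $\mathbb C$-algebra $(A,\Delta_A)$ with $\Delta_A$ a finite set of Hamiltonian derivations, checking $|\Delta_A|\le \dim_{\mathbb C}A\le \aleph_0<|\mathbb C|$, and invoking Theorem \ref{thm:main1}. Your extra remarks on the Poisson/$\Delta_A$ dictionary and the matching of conditions (1)--(4) with (i), (iii), (ii), (v) just make explicit what the paper leaves to the reader.
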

\begin{proof}
As discussed above, $(A,\Delta_A)$ is a commutative differential complex affine algebra, where $\Delta_A$ can be chosen as a finite set. Hence we know $|\Delta_A|\le \dim_\mathbb C\,A\le \aleph_0<\aleph=|\mathbb C|$. As a consequence, the equivalences follow from Theorem \ref{thm:main1}.
\end{proof}

\begin{theorem}
\label{thm:Poissonmain2}
Let $A$ be a complex affine Poisson algebra. Suppose ${\rm P. spec}\, A=\bigcup X_i$ is a union of locally closed subsets with each $X_i$, when endowed with the subspace topology, homeomorphic to ${\rm P. spec}\, A_i$ for some complex affine Poisson algebra $A_i$. Then $A$ satisfies the Poisson Dixmier-Moeglin equivalence if and only if each $A_i$ satisfies the Poisson Dixmier-Moeglin equivalence. In particular, if each $X_i$ is homeomorphic to ${\rm spec}\, A_i$ for some commutative noetherian complex algebra essentially of finite type, then $A$ satisfies the Poisson Dixmier-Moeglin equivalence.
\end{theorem}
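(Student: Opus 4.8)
The plan is to deduce this theorem directly from the general differential-algebra machinery developed in Section \ref{S:M}, specifically Theorem \ref{thm:stratification} and Corollary \ref{cor:commutative}, by translating the Poisson-theoretic hypotheses into the language of commutative differential $k$-algebras. The main point to set up is that a complex affine Poisson algebra, equipped with the finite set $\Delta_A$ of Hamiltonian derivations coming from a finite generating set, is a commutative noetherian differential $\mathbb{C}$-algebra satisfying the cardinality conditions $\dim_{\mathbb C} A \le \aleph_0 < |\mathbb C|$ and $|\Delta_A| < |\mathbb C|$, and that under this dictionary $\Delta_A$-$\mathrm{spec}\,A = \mathrm{P.spec}\,A$ and the $\Delta_A$-Dixmier-Moeglin equivalence coincides with the Poisson Dixmier-Moeglin equivalence. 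All of this was already recorded at the start of Section \ref{S:P}, so the proof is essentially an invocation.

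First I would observe that each Poisson algebra $A_i$ likewise gives a commutative differential $\mathbb C$-algebra $(A_i,\Delta_{A_i})$ with $\Delta_{A_i}$ finite, noetherian, and with $\dim_{\mathbb C} A_i \le \aleph_0 < |\mathbb C|$ and $|\Delta_{A_i}| < |\mathbb C|$. The hypothesis that $X_i$, with the subspace topology from $\mathrm{P.spec}\,A$, is homeomorphic to $\mathrm{P.spec}\,A_i = \Delta_{A_i}$-$\mathrm{spec}\,A_i$ is then exactly the hypothesis of Theorem \ref{thm:stratification} applied to $(R,\Delta) = (A,\Delta_A)$ and $(R_i,\Delta_i) = (A_i,\Delta_{A_i})$. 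Thus Theorem \ref{thm:stratification} yields that $A$ satisfies the Poisson Dixmier-Moeglin equivalence if and only if each $A_i$ does, which is the first assertion.

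For the ``in particular'' clause, suppose instead each $X_i$ is homeomorphic to $\mathrm{spec}\,A_i$ for a commutative noetherian complex algebra $A_i$ essentially of finite type. Such an $A_i$ is a localization of a finitely generated $\mathbb C$-algebra, hence noetherian with $\dim_{\mathbb C} A_i \le \aleph_0 < |\mathbb C|$. Regarding $A_i$ as a differential algebra with the trivial set of derivations $\Delta = \{0\}$, we have $\Delta$-$\mathrm{spec}\,A_i = \mathrm{spec}\,A_i$, and then Corollary \ref{cor:commutative} (applied to $(A,\Delta_A) = \bigcup X_i$ with the $X_i$ homeomorphic to $\mathrm{spec}\,A_i$) directly gives that $A$ satisfies the Poisson Dixmier-Moeglin equivalence, with no hypothesis needed on the $A_i$ beyond the stated ones. (Equivalently one checks that $\mathrm{spec}\,A_i$ with $\Delta=\{0\}$ trivially satisfies the $\Delta$-Dixmier-Moeglin equivalence since every commutative noetherian $\mathbb C$-algebra of small enough dimension satisfies the classical Dixmier-Moeglin equivalence, and then applies Theorem \ref{thm:stratification}.)

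There is no genuine obstacle here: the theorem is a specialization of results already proved in the general differential setting, and the only thing to be careful about is the bookkeeping of the correspondence between Poisson and differential notions and the verification that the finite sets $\Delta_{A_i}$ of Hamiltonian derivations indeed satisfy $|\Delta_{A_i}| < |\mathbb C|$, which is immediate since each $A_i$ is (essentially) finitely generated over $\mathbb C$. If anything, the point requiring a sentence of care is that the homeomorphism $X_i \cong \mathrm{P.spec}\,A_i$ (or $\cong \mathrm{spec}\,A_i$) is a homeomorphism of topological spaces only, which is all that Theorem \ref{thm:stratification} and Corollary \ref{cor:commutative} require, so no compatibility with the differential structures is needed across the pieces.
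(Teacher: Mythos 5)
Your proposal is correct and takes essentially the same route as the paper: the paper's own proof of this theorem is precisely the observation that $(A,\Delta_A)$ (and each $(A_i,\Delta_{A_i})$) is a commutative noetherian differential $\mathbb C$-algebra with $\Delta_A$ finite and $\dim_{\mathbb C}A\le\aleph_0<|\mathbb C|$, followed by an invocation of Theorem \ref{thm:stratification} for the main equivalence and Corollary \ref{cor:commutative} for the ``in particular'' clause, exactly as you do. The only point to tighten is your claim that an algebra essentially of finite type over $\mathbb C$ has $\dim_{\mathbb C}A_i\le\aleph_0$: this holds for finitely generated algebras but can fail for localizations (e.g.\ $\dim_{\mathbb C}\mathbb C(x)=|\mathbb C|$, as the elements $(x-\lambda)^{-1}$, $\lambda\in\mathbb C$, are linearly independent), so the hypothesis $\dim_{\mathbb C}A_i<|\mathbb C|$ of Corollary \ref{cor:commutative} must be verified for the $A_i$ at hand rather than deduced from essential finite generation --- a point the paper's own one-line proof glosses over in the same way.
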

\begin{proof}
It is Theorem \ref{thm:stratification} and Corollary \ref{cor:commutative} with the suitable assumptions on $A$ and $A_i$'s.
\end{proof}

We have two different stratifications on the maximal spectrum $\mathscr A$ of a complex affine Poisson algebra $A$. One is the symplectic core stratification given as a special case of the differential core stratification on $(A,\Delta_A)$. 

The equivalence relation $\sim$ on ${\rm spec}\, A$ defined by the symplectic core stratification is given by:
$$
\mathfrak p\sim \mathfrak q~\Longleftrightarrow~\mathcal P(\mathfrak p)=\mathcal P(\mathfrak q),
$$
where we denote the corresponding equivalence class of any $\mathfrak p\in {\rm spec}\, A$ by $\mathscr C_{\rm spec}(\mathfrak p)$, so that
$${\rm spec}\, A~=~ \bigsqcup_{\mathfrak q\in {\rm P. spec}\, A}\quad \mathscr C_{\rm spec}(\mathfrak q)~=~\left\{\mathfrak p\in {\rm spec}\, A\,|\, \mathcal P(\mathfrak p)=\mathfrak q\right\}.$$
The set $\mathscr C_{\rm spec}(\mathfrak p)$ is called the {\it prime symplectic core} (of $\mathfrak p$). Moreover, the {\it (maximal) symplectic core} of $\mathfrak p$ is denoted by $\mathscr C(\mathfrak p):=\mathscr C_{\rm spec}(\mathfrak p)\bigcap \mathscr A$ and we have
$$\mathscr A~=~ \bigsqcup_{\mathfrak q\in {\rm P. prim}\, A}\quad \mathscr C(\mathfrak q)~=~\left\{\mathfrak m\in \mathscr A\,|\, \mathcal P(\mathfrak m)=\mathfrak q\right\}.$$

The other one is called the symplectic leaf stratification. When $A$ is smooth, the symplectic leaf stratification on $\mathscr A$ is the symplectic foliation on the complex analytic Poisson manifold $\mathscr A$ in the sense of \cite{Wein83}. The general case for $A$ not being smooth is discussed in \cite[\S 3.5]{BrGor03}. 

\begin{definition}\cite[\S 3.6]{BrGor03}\label{D:Palg}
The Poisson bracket of $A$ is said to be {\it algebraic} if each symplectic leaf appearing in the symplectic leaf stratification is a locally closed subvariety of $\mathscr A$. 
\end{definition}

The following results are well-known. We refer to \cite[Lemma 3.5]{BrGor03} and \cite[Proposition 3.6]{BrGor03} for details of proofs.  

\begin{lemma}\label{lem:slc}
Let $A$ be a complex affine Poisson algebra, and $\mathscr L(\mathfrak m)$ be the symplectic leaf containing some maximal ideal $\mathfrak m\in \mathscr A$. Then we have
\begin{enumerate}
\item $\overline{\mathscr L(\mathfrak m)}=\{\mathfrak q\in \mathscr A\,|\, \mathcal P(\mathfrak m)\subseteq \mathfrak q\}$;
\item  $\mathscr L(\mathfrak m)\subseteq \mathscr C(\mathfrak m)$ and $\overline{\mathscr L(\mathfrak m)}=\overline{\mathscr C(\mathfrak m)}$;
\item Each symplectic core is a disjoint union of symplectic leaves;
\item The Zariski closure of each symplectic leaf (resp. symplectic core) is a disjoint union of symplectic leaves (resp. symplectic cores).
\end{enumerate}
Further suppose the Poisson bracket of $A$ is algebraic. Then the symplectic core stratification coincides with the symplectic leaf stratification.
\end{lemma}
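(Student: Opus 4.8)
The plan is to derive all four assertions, and the concluding statement, from the single identity
$$
\overline{\mathscr L(\mathfrak m)}~=~V\bigl(\mathcal P(\mathfrak m)\bigr)~:=~\{\mathfrak q\in\mathscr A : \mathcal P(\mathfrak m)\subseteq\mathfrak q\}\qquad(\mathfrak m\in\mathscr A),
$$
which is exactly assertion (i); here the closure is taken in $\mathscr A={\rm max}\, A$. For the inclusion $\overline{\mathscr L(\mathfrak m)}\subseteq V(\mathcal P(\mathfrak m))$ I would show that $V(\mathcal P(\mathfrak m))$ is preserved by every Hamiltonian flow: since $\mathcal P(\mathfrak m)$ is a Poisson ideal, each Hamiltonian derivation $\{g,-\}$ sends $\mathcal P(\mathfrak m)$ into itself and is hence tangent to $V(\mathcal P(\mathfrak m))$, so its (local) flow preserves that closed subset; as $\mathscr L(\mathfrak m)$ is by construction swept out from $\mathfrak m\in V(\mathcal P(\mathfrak m))$ by concatenating such flows (Weinstein's foliation \cite{Wein83} in the smooth case, the stratified construction of \cite[\S 3.5]{BrGor03} in general), the leaf lies in $V(\mathcal P(\mathfrak m))$, and so does its closure. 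For the reverse inclusion it suffices to show $I\bigl(\overline{\mathscr L(\mathfrak m)}\bigr)\subseteq\mathcal P(\mathfrak m)$, and since $\mathcal P(\mathfrak m)$ is the largest Poisson ideal contained in $\mathfrak m$ it is enough to check that $I\bigl(\overline{\mathscr L(\mathfrak m)}\bigr)$ is a Poisson ideal contained in $\mathfrak m$. Containment in $\mathfrak m$ is clear because $\mathfrak m\in\mathscr L(\mathfrak m)$; and it is a Poisson ideal because for $f$ vanishing on $\mathscr L(\mathfrak m)$ and arbitrary $g\in A$, the function $\{g,f\}$ is the derivative of $f$ along the Hamiltonian field $\{g,-\}$, which is tangent to the leaf, so $\{g,f\}$ again vanishes on $\mathscr L(\mathfrak m)$, i.e. on $\overline{\mathscr L(\mathfrak m)}$.

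Granting (i), the remaining parts are formal manipulations with Poisson cores. If $\mathfrak n\in\mathscr L(\mathfrak m)$ then $\mathscr L(\mathfrak n)=\mathscr L(\mathfrak m)$, so by (i) $\mathcal P(\mathfrak n)=I\bigl(\overline{\mathscr L(\mathfrak n)}\bigr)=I\bigl(\overline{\mathscr L(\mathfrak m)}\bigr)=\mathcal P(\mathfrak m)$, which gives $\mathscr L(\mathfrak m)\subseteq\mathscr C(\mathfrak m)$; combined with the obvious $\mathscr C(\mathfrak m)\subseteq V(\mathcal P(\mathfrak m))=\overline{\mathscr L(\mathfrak m)}$ this yields $\overline{\mathscr C(\mathfrak m)}=\overline{\mathscr L(\mathfrak m)}$, proving (ii). Part (iii) follows since each point of $\mathscr C(\mathfrak m)$ lies on a leaf contained in $\mathscr C(\mathfrak m)$ and distinct leaves are disjoint. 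For (iv), if $\mathfrak q\in V(\mathcal P(\mathfrak m))$ then $\mathcal P(\mathfrak m)\subseteq\mathcal P(\mathfrak q)$ by maximality of the Poisson core, whence $\mathscr C(\mathfrak q)\subseteq V(\mathcal P(\mathfrak m))$; thus $\overline{\mathscr L(\mathfrak m)}=\overline{\mathscr C(\mathfrak m)}=V(\mathcal P(\mathfrak m))$ is a disjoint union of symplectic cores, and by (iii) also a disjoint union of symplectic leaves.

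Finally I would handle the algebraic case. Let $\mathscr L'$ be any symplectic leaf contained in $\mathscr C(\mathfrak m)$ (one exists, namely $\mathscr L(\mathfrak m)$). Choosing $\mathfrak n\in\mathscr L'$ we have $\mathcal P(\mathfrak n)=\mathcal P(\mathfrak m)$, so $\overline{\mathscr L'}=V(\mathcal P(\mathfrak m))=\overline{\mathscr L(\mathfrak m)}$ by (i). Because $\mathcal P(\mathfrak m)$ is prime by Lemma \ref{DeltaP}(i), $V(\mathcal P(\mathfrak m))$ is irreducible; since the bracket is algebraic, $\mathscr L'$ and $\mathscr L(\mathfrak m)$ are locally closed, hence nonempty open and therefore dense in $V(\mathcal P(\mathfrak m))$, so they meet, forcing $\mathscr L'=\mathscr L(\mathfrak m)$ as distinct leaves are disjoint. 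Hence $\mathscr C(\mathfrak m)=\mathscr L(\mathfrak m)$ for every $\mathfrak m$, i.e. the two stratifications coincide. The one step that is not bookkeeping with Poisson cores — and the place I expect the real work to sit — is the geometric input used to prove (i): that symplectic leaves are precisely the orbits of the (stratified) Hamiltonian pseudogroup and that Hamiltonian derivations are tangent to them. In the smooth case this is classical, but in the singular setting it is the substance of the construction recorded in \cite[Lemma 3.5]{BrGor03}, which is why we are content to cite \cite[Lemma 3.5, Proposition 3.6]{BrGor03} rather than reproduce the argument.
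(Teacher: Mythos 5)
Your proposal is correct: the paper itself gives no argument for this lemma, simply citing \cite[Lemma 3.5, Proposition 3.6]{BrGor03}, and what you have written is a faithful reconstruction of exactly those cited proofs — deducing everything from the identity $\overline{\mathscr L(\mathfrak m)}=V(\mathcal P(\mathfrak m))$, proved via tangency of Hamiltonian vector fields on one side and the fact that $I(\overline{\mathscr L(\mathfrak m)})$ is a Poisson ideal inside $\mathfrak m$ on the other, with the irreducibility of $V(\mathcal P(\mathfrak m))$ (primeness of the Poisson core) driving the algebraic case. Since you, like the paper, defer the genuinely geometric input (the stratified construction of leaves in the singular setting) to \cite{BrGor03}, your treatment is essentially the same as the paper's.
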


There is a continuous retraction $\pi: {\rm spec}\, A\to {\rm P. spec}\, A$ given by the Poisson core such that $\pi(\mathfrak p)=\mathcal P(\mathfrak p)$ for any $\mathfrak p\in {\rm spec}\, A$. Moreover, ${\rm P. spec}\, A$ is a topological quotient of ${\rm spec}\, A$ via $\pi$ \cite[Theorem 1.3]{Go06}. The restriction of $\pi$ on ${\rm max}\, A$ gives a continuous surjection denoted by $\pi_m: {\rm max}\, A\to {\rm P. prim}\, A$ \cite[Theorem 1.5]{Go06}.

The following proposition gives an affirmative answer to \cite[Question 1.4]{Go06} asking whether or not the $\Delta$-primitive spectrum is a topological quotient of the maximal spectrum for any commutative differential algebra in the special case of complex affine Poisson algebra.

\begin{proposition}\label{prop:topquot}
Let $A$ be a complex affine Poisson algebra. Then ${\rm P. prim}\, A$ is a topological quotient of ${\rm max}\, A$ via $\pi_m: {\rm max}\, A\to {\rm P. prim}\, A$.
\end{proposition}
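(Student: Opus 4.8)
The plan is to verify directly that $\pi_m$ is a topological quotient map. Continuity and surjectivity of $\pi_m$ are already recorded \cite[Theorem 1.5]{Go06}, so all that is needed is the following: if $D\subseteq{\rm P. prim}\, A$ is such that $\pi_m^{-1}(D)$ is closed in ${\rm max}\, A$, then $D$ is closed in ${\rm P. prim}\, A$. The essential input will be the description, special to complex affine Poisson algebras, of the Zariski closure of a symplectic core as the full fibre of $\pi_m$ over the corresponding Poisson primitive ideal --- this is Lemma~\ref{lem:slc}(1),(2) --- together with the elementary fact that $A$, being affine over $\mathbb C$, is a Jacobson ring.

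So I would assume $\pi_m^{-1}(D)$ closed. Since the fibres of $\pi_m$ are the maximal symplectic cores, $\pi_m^{-1}(D)=\bigsqcup_{\mathfrak q\in D}\mathscr C(\mathfrak q)$, and closedness then forces $\overline{\mathscr C(\mathfrak q)}\subseteq\pi_m^{-1}(D)$ for every $\mathfrak q\in D$. By Lemma~\ref{lem:slc}(1),(2), $\overline{\mathscr C(\mathfrak q)}=\{\mathfrak m\in\mathscr A : \mathfrak q\subseteq\mathfrak m\}$, whereas the reverse inclusion $\mathscr C(\mathfrak q)\subseteq\{\mathfrak m\in\mathscr A : \mathfrak q\subseteq\mathfrak m\}$ is immediate since $\mathcal P(\mathfrak m)=\mathfrak q\subseteq\mathfrak m$ for $\mathfrak m\in\mathscr C(\mathfrak q)$. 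Hence
$$\pi_m^{-1}(D)~=~\bigcup_{\mathfrak q\in D}\{\mathfrak m\in\mathscr A : \mathfrak q\subseteq\mathfrak m\}.$$
Setting $I:=\bigcap_{\mathfrak q\in D}\mathfrak q$, a Poisson semiprime ideal, and using that each $\mathfrak q$ equals the intersection of the maximal ideals containing it (as $A$ is Jacobson), one finds that the Zariski closure of the displayed union in $\mathscr A$ is $\{\mathfrak m\in\mathscr A : I\subseteq\mathfrak m\}$; since $\pi_m^{-1}(D)$ is already closed, $\pi_m^{-1}(D)=\{\mathfrak m\in\mathscr A : I\subseteq\mathfrak m\}$.

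It then remains to verify that $D=\{\mathfrak q\in{\rm P. prim}\, A : I\subseteq\mathfrak q\}$, which is the intersection of ${\rm P. prim}\, A$ with a closed subset of ${\rm P. spec}\, A$ and so closed in the subspace topology of ${\rm P. prim}\, A$; this concludes the argument. The inclusion ``$\subseteq$'' is clear from the definition of $I$. For ``$\supseteq$'', given $\mathfrak q'\in{\rm P. prim}\, A$ with $I\subseteq\mathfrak q'$, write $\mathfrak q'=\mathcal P(\mathfrak m')$ with $\mathfrak m'\in\mathscr A$; then $I\subseteq\mathfrak q'=\mathcal P(\mathfrak m')\subseteq\mathfrak m'$, so $\mathfrak m'\in\pi_m^{-1}(D)$ by the previous step, whence $\mathfrak q'=\pi_m(\mathfrak m')\in D$.

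The only non-formal step is the appeal to Lemma~\ref{lem:slc} for the equality $\overline{\mathscr C(\mathfrak q)}=\{\mathfrak m\in\mathscr A : \mathfrak q\subseteq\mathfrak m\}$: this is precisely the point at which the symplectic-leaf geometry of the complex affine Poisson variety $\mathscr A$ is used, and it is exactly what is unavailable for a general commutative differential $k$-algebra, where by Proposition~\ref{prop:DMECore} such control over the closures of differential cores is already tantamount to the Dixmier--Moeglin equivalence. Everything else is bookkeeping with Zariski closures and the Jacobson property, so I expect the final write-up to be quite short.
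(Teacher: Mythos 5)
Your proof is correct and follows essentially the same route as the paper's: both reduce to showing that a closed preimage $\pi_m^{-1}(D)=\{\mathfrak m\in\mathscr A : I\subseteq\mathfrak m\}$ forces $D=\{\mathfrak q\in{\rm P.\,prim}\,A : I\subseteq\mathfrak q\}$, with Lemma~\ref{lem:slc}(i)\&(ii) supplying $\overline{\mathscr C(\mathfrak q)}=\{\mathfrak m\in\mathscr A : \mathfrak q\subseteq\mathfrak m\}$ and the Jacobson property of $A$ doing the rest. The only difference is cosmetic: you take $I=\bigcap_{\mathfrak q\in D}\mathfrak q$ explicitly, whereas the paper works with an arbitrary ideal defining the closed set $\pi_m^{-1}(D)$.
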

\begin{proof}
By definition, it is routine to check that $\pi_m: \mathscr A\to {\rm P. prim}\, A$ is a continuous surjective map. Now suppose $\pi_m^{-1}(X)$ is closed in $\mathscr A$ for some subset $X\subseteq {\rm P. prim}\, A$. In order to show that ${\rm P. prim}\, A$ is a topological quotient of $\mathscr A$ via $\pi_m$, we need to show that $X$ is closed in ${\rm P. prim}\, A$. We know there exists some ideal $I$ of $A$ such that $\pi_m^{-1}(X)=\{\mathfrak m\in \mathscr A\,|\, I\subseteq \mathfrak m\}$. It suffices to show that $X=\{\mathfrak m\in {\rm P. prim}\, A\,|\, I\subseteq \mathfrak m\}$.

In one direction, take any $\mathfrak q\in {\rm P. prim}\, A$ such that $I\subseteq \mathfrak q$. Write $\mathfrak q=\mathcal P(\mathfrak m)$ for some $\mathfrak m\in \mathscr A$. It is clear that $I\subseteq \mathfrak q=\mathcal P(\mathfrak m)\subseteq \mathfrak m$. So $\mathfrak m\in \pi_m^{-1}(X)$ and $\mathfrak q=\pi_m(\mathfrak m)\in \pi_m(\pi_m^{-1}(X))=X$. So $X\supseteq \{\mathfrak m\in {\rm P. prim}\, A\,|\, I\subseteq \mathfrak m\}$.

In the other direction, take any $\mathfrak q\in X$. It is clear that $\pi_m^{-1}(\mathfrak q)=\mathscr C(\mathfrak q)\subseteq \pi_m^{-1}(X)$. By Lemma \ref{lem:slc}(i)\&(ii), we get
$$\left\{\mathfrak m\in \mathscr A\,|\, q\subseteq \mathfrak m\right\}~=~\overline{\mathscr C(\mathfrak q)}~\subseteq~\pi_m^{-1}(X)~=~\left\{\mathfrak m\in \mathscr A\,|\, I\subseteq \mathfrak m\right\}.$$
This implies that $I\subseteq \mathfrak q$ and $\mathfrak q\in  \{\mathfrak m\in {\rm P. prim}\, A\,|\, I\subseteq \mathfrak m\}$. So $X\subseteq \{\mathfrak m\in {\rm P. prim}\, A\,|\, I\subseteq \mathfrak m\}$. This completes our proof.
\end{proof}

Our second topological description of the Poisson Dixmier-Moeglin equivalence for any complex affine Poisson algebra $A$ is given in terms of the symplectic core stratification on the maximal spectrum $\mathscr A$. 

\begin{theorem}\label{thm:algebraicDME}
Let $A$ be a complex affine Poisson algebra. Then the following are equivalent.
\begin{enumerate}
\item $A$ satisfies the Poisson Dixmier-Moeglin equivalence;
\item $\mathscr C_{\rm spec}(\mathfrak p)$ is locally closed in ${\rm spec}\, A$ for any $\mathfrak p\in {\rm spec}\, A$;
\item $\mathscr C(\mathfrak m)$ is locally closed in ${\rm max}\, A$ for any $\mathfrak m\in {\rm max}\, A$.
\end{enumerate}
\end{theorem}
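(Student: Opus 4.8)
The plan is to read the theorem off Proposition \ref{prop:DMECore}, applied to the commutative differential $\mathbb C$-algebra $(A,\Delta_A)$. This pair satisfies the hypotheses of that proposition: $A$ is noetherian (being affine), one may take $\Delta_A$ finite, and $\dim_{\mathbb C}A\le\aleph_0<|\mathbb C|$; and under the dictionary ``$\Delta_A$-core $=\mathcal P$, $\Delta_A$-prime $=$ Poisson prime, $\Delta_A$-primitive $=$ Poisson primitive, $\Delta_A\text{-}{\rm spec}\,A={\rm P. spec}\,A$'' the prime and maximal differential cores become the prime and maximal symplectic cores. Proposition \ref{prop:DMECore} therefore already tells us that $A$ satisfies the Poisson Dixmier--Moeglin equivalence if and only if $\mathscr C_{\rm spec}(P)$ is locally closed in ${\rm spec}\,A$ for every $P\in{\rm P. prim}\,A$, and if and only if $\mathscr C(P)$ is locally closed in ${\rm max}\,A$ for every $P\in{\rm P. prim}\,A$ together with the identity $\overline{\mathscr C(P)}=\{Q\in{\rm max}\,A : P\subseteq Q\}$ in ${\rm max}\,A$.

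First I would dispose of that last closure identity: for a complex affine Poisson algebra it holds unconditionally. Indeed, Lemma \ref{lem:slc}(i),(ii) gives $\overline{\mathscr C(\mathfrak m)}=\overline{\mathscr L(\mathfrak m)}=\{\mathfrak q\in\mathscr A : \mathcal P(\mathfrak m)\subseteq\mathfrak q\}$ for every maximal ideal $\mathfrak m$, and every $P\in{\rm P. prim}\,A$ equals $\mathcal P(\mathfrak m)$ for some such $\mathfrak m$, so Proposition \ref{prop:DMECore}(iii) collapses to the bare assertion that $\mathscr C(P)$ is locally closed in ${\rm max}\,A$ for all $P\in{\rm P. prim}\,A$. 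Since any $\mathfrak m\in{\rm max}\,A$ has $\mathcal P(\mathfrak m)\in{\rm P. prim}\,A$ and $\mathscr C(\mathfrak m)=\mathscr C(\mathcal P(\mathfrak m))$, this is exactly condition (3), which yields (1)$\Leftrightarrow$(3). Likewise $\mathscr C_{\rm spec}(\mathfrak p)=\mathscr C_{\rm spec}(\mathcal P(\mathfrak p))=\pi^{-1}(\mathcal P(\mathfrak p))$ for the continuous retraction $\pi\colon{\rm spec}\,A\to{\rm P. spec}\,A$, and a prime symplectic core contains a maximal ideal precisely when it lies over a Poisson primitive ideal (since $\pi$ restricts to a surjection ${\rm max}\,A\to{\rm P. prim}\,A$); reading condition (2) for such cores matches Proposition \ref{prop:DMECore}(ii) verbatim, giving (1)$\Leftrightarrow$(2). (The easy implication $\mathscr C(\mathfrak m)=\mathscr C_{\rm spec}(\mathfrak m)\cap{\rm max}\,A$, the trace of a locally closed set on a subspace, also gives (2)$\Rightarrow$(3) at once.)

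The step that needs the most care is this bookkeeping: passing between the index sets ``all $\mathfrak p\in{\rm spec}\,A$'' or ``all $\mathfrak m\in{\rm max}\,A$'' appearing in (2) and (3) and the index set ``all $P\in{\rm P. prim}\,A$'' handled by Proposition \ref{prop:DMECore}, and correctly transporting ``locally closed'' among ${\rm spec}\,A$, ${\rm max}\,A$ and ${\rm P. spec}\,A$ along the retraction $\pi$, which is a topological quotient admitting a continuous section by \cite[Theorem 1.3]{Go06}. Once this is settled --- and once Lemma \ref{lem:slc} has been used to discard the auxiliary closure identity of Proposition \ref{prop:DMECore}(iii) --- the statement is a direct transcription of Proposition \ref{prop:DMECore}, requiring no new Poisson-geometric input beyond the Brown--Gordon description of the closures of symplectic leaves and cores.
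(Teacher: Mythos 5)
Your argument is essentially the paper's own proof: specialize Proposition \ref{prop:DMECore} to the commutative differential algebra $(A,\Delta_A)$ (with $\Delta_A$ finite and $\dim_{\mathbb C}A\le\aleph_0<|\mathbb C|$), and use Lemma \ref{lem:slc}(i),(ii) to discharge the closure identity appearing in Proposition \ref{prop:DMECore}(iii). Your treatment of (1)$\Leftrightarrow$(3), of the hypotheses, and of (2)$\Rightarrow$(3) is correct and matches the paper's one-line argument.

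However, the step you yourself single out as ``bookkeeping'' hides a genuine gap in (1)$\Rightarrow$(2). Condition (2) quantifies over all $\mathfrak p\in{\rm spec}\,A$, and since $\mathscr C_{\rm spec}(\mathfrak p)=\mathscr C_{\rm spec}(\mathcal P(\mathfrak p))$ with $\mathcal P(\mathfrak p)$ an arbitrary Poisson prime (every $Q\in {\rm P. spec}\,A$ occurs, because $Q=\mathcal P(Q)$), condition (2) demands local closedness of the prime cores over \emph{all} Poisson primes, not only over the Poisson primitive ones covered by Proposition \ref{prop:DMECore}(ii). Your phrase ``reading condition (2) for such cores'' silently restricts the quantifier to cores containing a maximal ideal, i.e.\ to cores over Poisson primitive ideals; for a non-primitive Poisson prime $Q$ the Dixmier--Moeglin equivalence gives no local closedness of $\{Q\}$ in ${\rm P. spec}\,A$, so nothing can be pulled back along $\pi$, and in fact the unrestricted implication fails: for $A=\mathbb C[x,y,z]$ with $\{x,y\}=1$ and $z$ Poisson central, the Poisson Dixmier--Moeglin equivalence holds, yet $\mathscr C_{\rm spec}\bigl((0)\bigr)=\{\mathfrak p\,|\,\mathfrak p\cap\mathbb C[z]=(0)\}$ is not locally closed in ${\rm spec}\,A$ (its closure is all of ${\rm spec}\,A$, while its complement $\bigcup_{c\in\mathbb C}V(z-c)$ is not closed). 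So your derivation, exactly like the paper's appeal to Proposition \ref{prop:DMECore}, proves the theorem only with (2) read as ranging over $\mathfrak p\in{\rm max}\,A$ (equivalently, over prime cores of Poisson primitive ideals); with that reading your transcription is complete, but (2) as literally stated does not follow, and the quantifier must be restricted for the equivalence to hold.
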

\begin{proof}
The equivalences come from Proposition \ref{prop:DMECore} by considering the complex affine Poisson algebra $A$ as a special commutative differential algebra $(A,\Delta_A)$. In particular, for (iii) the condition $\overline{\mathscr C(\mathfrak m)}=\{\mathfrak q\in \mathscr A\,|\, \mathcal P(\mathfrak m)\subseteq \mathfrak q\}$ is automatically satisfied by Lemma \ref{lem:slc}(i)\&(ii).
\end{proof}

\begin{corollary}\label{cor:algebraicDME}
Let $A$ be a complex affine Poisson algebra. If the Poisson bracket of $A$ is algebraic, then $A$ satisfies the Poisson Dixmier-Moeglin equivalence. 
\end{corollary}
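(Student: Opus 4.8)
The plan is to obtain the corollary as an immediate consequence of Theorem \ref{thm:algebraicDME} combined with Lemma \ref{lem:slc}. By the equivalence (i)$\Leftrightarrow$(iii) of Theorem \ref{thm:algebraicDME}, it suffices to verify that the maximal symplectic core $\mathscr C(\mathfrak m)$ is locally closed in $\mathrm{max}\, A$ for every $\mathfrak m\in \mathscr A$; no separate check of the closure identity $\overline{\mathscr C(\mathfrak m)}=\{\mathfrak q\in\mathscr A\,|\,\mathcal P(\mathfrak m)\subseteq\mathfrak q\}$ is needed, since that is already built into Theorem \ref{thm:algebraicDME} via Lemma \ref{lem:slc}(i)\&(ii).

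First I would feed in the algebraicity hypothesis through the last assertion of Lemma \ref{lem:slc}: when the Poisson bracket of $A$ is algebraic, the symplectic core stratification coincides with the symplectic leaf stratification, so that $\mathscr C(\mathfrak m)=\mathscr L(\mathfrak m)$ for every maximal ideal $\mathfrak m$ of $A$. Next, by Definition \ref{D:Palg}, algebraicity of the Poisson bracket means precisely that every symplectic leaf $\mathscr L(\mathfrak m)$ is a locally closed subvariety of $\mathscr A$, hence a locally closed subset of the Zariski space $\mathrm{max}\, A$. Combining these two facts, each $\mathscr C(\mathfrak m)$ is locally closed in $\mathrm{max}\, A$, which is condition (iii) of Theorem \ref{thm:algebraicDME}; therefore $A$ satisfies the Poisson Dixmier-Moeglin equivalence.

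The only point requiring a moment of care — and the closest thing to an obstacle in what is otherwise a two-line deduction — is the identification of the algebro-geometric notion of a locally closed subvariety appearing in Definition \ref{D:Palg} with the purely topological notion of a locally closed subset of the Zariski space $\mathrm{max}\, A$ used in Theorem \ref{thm:algebraicDME}. These agree because a locally closed subvariety is by definition open in its Zariski closure, which is exactly Definition-Lemma \ref{dl:clopen}(iii); I would simply invoke this without further comment. All of the substantive work has already been carried out in establishing Theorem \ref{thm:algebraicDME} (through Proposition \ref{prop:DMECore}) and in the coincidence of the two stratifications recorded in Lemma \ref{lem:slc}, so nothing deeper is required here.
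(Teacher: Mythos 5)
Your proof is correct and follows essentially the same route as the paper, which deduces the corollary directly from Lemma \ref{lem:slc} (coincidence of the core and leaf stratifications under algebraicity) together with Theorem \ref{thm:algebraicDME}(iii). Your additional remarks — that the closure identity is already absorbed into Theorem \ref{thm:algebraicDME} via Lemma \ref{lem:slc}(i)\&(ii), and that a locally closed subvariety is locally closed in the topological sense of Definition-Lemma \ref{dl:clopen}(iii) — simply make explicit what the paper leaves implicit.
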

\begin{proof}
The result follows from Lemma \ref{lem:slc} and Theorem \ref{thm:algebraicDME}.
\end{proof}

\begin{remark}\label{rem:algebraic}
In general, the inverse statement of Corollary \ref{cor:algebraicDME} does not hold. The following example is taken from \cite[Example 2.37]{Van96} (also see \cite[Remarks \S 3.6(1)]{BrGor03}). Let $A=\mathbb C[x,y,z]$ with Poisson bracket given by $\{x,y\}=0, \{x,z\}=\alpha x$ and $\{y,z\}=y$ for some $\alpha\in \mathbb R$. Then the symplectic leaves have small dimensions: the single points $(0,0,c)$ for $c\in \mathbb C$ and the 2-dimensional symplectic leaves defined by the equation $xy^\alpha=\text{constant}$. If $\alpha$ is rational, then the leaves are algebraic; but when $\alpha$ is irrational, then the leaves are not algebraic varieties. On the other side, the symplectic cores are given by points $(0,0,c)$ for $c\in \mathbb C$ and $(\mathbb C^\times)^2\times \mathbb C$. So they are always locally closed. 
\end{remark}

Let $G$ be an algebraic group. Suppose that $G$ acts rationally by Poisson automorphisms on $A$. It is easy to check that $G$ then acts rationally on the commutative differential algebra $(A,\Delta_A)$ where $\Delta_A$ can be chosen to be a finite set of Hamiltonian derivations $\{a_i,-\}_{1\le i\le n}$ where $\{a_i\}_{1\le i\le n}$ is a basis for some finite-dimensional $G$-invariant generating space of $A$. As a consequence, $G$ acts on the symplectic leaves and symplectic cores of $A$, and we label the $G$-orbits of symplectic leaves (or symplectic cores) $\mathscr G_i$, so that we have a stratification
$$
\mathscr A~=~\bigsqcup\ \mathscr G_i,
$$
where $\mathscr G_i=\sqcup_{g\in G/{\rm stab}(\mathscr L)}\, g.\mathscr L$ for some symplectic leaf (or symplectic core) $\mathscr L$ of $\mathscr A$. The following result is a $G$-equivariant version of \cite[Proposition 3.7]{BrGor03}.

\begin{theorem}\label{thm:GalgebraDME}
Let $G$ be an algebraic group acting rationally by Poisson automorphism on a complex affine Poisson algebra $A$. If the $G$-equivariant stratification of $A$ into $G$-orbits of symplectic leaves (or symplectic cores) is finite, then $A$ satisfies the Poisson Dixmier-Moeglin equivalence.
\end{theorem}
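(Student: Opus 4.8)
The plan is to derive the Poisson Dixmier–Moeglin equivalence from Theorem~\ref{thm:algebraicDME}: by Theorem~\ref{thm:Poissonmain1} it is equivalent to show that every Poisson primitive ideal $\mathfrak q\in{\rm P. prim}\, A$ is locally closed in ${\rm P. spec}\, A$, and I would prove this by a Noetherian induction on $\dim A$, using the $G$-stratification of Section~\ref{S:S}. First some harmless reductions. Replacing $G$ by its identity component $G^{\circ}$ multiplies the number of orbits only by the finite index $[G:G^{\circ}]$, so we may assume $G$ connected. Then the minimal Poisson primes of $A$ are $G$-stable — indeed for a minimal Poisson prime $\mathfrak p$, $(\mathfrak p:G)$ is a prime $(G,\Delta_A)$-ideal by Lemma~\ref{lem:GPideal}(ii),(iii) and is contained in $\mathfrak p$, hence equals $\mathfrak p$ — and, since a Poisson prime is locally closed in ${\rm P. spec}\, A$ iff it is so in the closed subspace cut out by any minimal prime below it, it suffices to treat each Poisson domain $A/\mathfrak p$ (which again has $G$ acting rationally with finitely many $G$-orbits of Poisson primitive ideals, ${\rm P. prim}(A/\mathfrak p)$ being the image of a $G$-stable subset of ${\rm P. prim}\, A$). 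Also note that ``finitely many $G$-orbits of symplectic cores'' is literally ``finitely many $G$-orbits of Poisson primitive ideals'' via the $G$-equivariant surjection $\mathfrak m\mapsto\mathcal P(\mathfrak m)$ with fibres the cores, and that the hypothesis on leaves implies the one on cores since each core is a disjoint union of leaves (Lemma~\ref{lem:slc}(3)), the assignment of a leaf-orbit to the orbit of the core containing it being a well-defined surjection.

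So we are reduced to: \emph{$B$ a complex affine Poisson domain, $G$ connected acting rationally by Poisson automorphisms, ${\rm P. prim}\, B$ a union of finitely many $G$-orbits; show every Poisson primitive ideal of $B$ is locally closed in ${\rm P. spec}\, B$.} Fix such a $\mathfrak q$ and let $K:=(\mathfrak q:G)$, a prime $(G,\Delta_B)$-ideal (Lemma~\ref{lem:GPideal}). If $K\neq 0$ then $\dim B/K<\dim B$, $B/K$ is a Poisson domain with $G$ acting rationally and still finitely many $G$-orbits of Poisson primitive ideals, so by the induction hypothesis $B/K$ satisfies the Poisson Dixmier–Moeglin equivalence; in particular $\mathfrak q/K$ is locally closed in ${\rm P. spec}(B/K)$, hence $\mathfrak q$ is locally closed in ${\rm P. spec}\, B$. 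The same device shows that every Poisson primitive ideal lying in some $V(K')$ with $0\neq K'\in(G,\Delta_B)\text{-}{\rm spec}\, B$ is locally closed, so everything comes down to the Poisson primitive ideals $\mathfrak q$ with $(\mathfrak q:G)=0$ — equivalently, the members of the ``dense'' $G$-orbits, those $O\subseteq{\rm P. prim}\, B$ with $\bigcap_{\mathfrak p\in O}\mathfrak p=0$ — which all lie in the single stratum $\Delta_B\text{-}{\rm spec}_0\, B$ (note $(G,\Delta_B)\text{-}{\rm spec}\, B$ is finite, being a quotient of the finite set of $G$-orbits of ${\rm P. prim}\, B$ via $P\mapsto(P:G)$, using Lemma~\ref{DeltaP}(iv) in the domains $B/K$). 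For an algebraic torus this is exactly where Goodearl's stratification theorem \cite[Theorem~3.2]{Go06} enters, identifying $\Delta_B\text{-}{\rm spec}_0\, B$ with the prime spectrum of a Laurent polynomial ring over a field, whose closed points are locally closed, so that Proposition~\ref{prop:finiteGprime}/Corollary~\ref{cor:commutative} finishes.

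The treatment of these dense orbits for a general connected $G$ — the $G$-equivariant analogue of \cite[Proposition~3.7]{BrGor03} — is the step I expect to be the real obstacle. The idea is to argue directly with $I_{\mathfrak q}:=\bigcap\{\mathfrak p\in{\rm P. spec}\, B:\mathfrak p\supsetneq\mathfrak q\}$ and show $I_{\mathfrak q}\supsetneq\mathfrak q$, which by Lemma~\ref{lem:equilocallyclosed} gives the result; by Lemma~\ref{DeltaP}(iv) one may take the intersection only over Poisson primitive $\mathfrak p\supsetneq\mathfrak q$, each such $\mathfrak p$ lies in one of the finitely many orbits, no orbit contributes two comparable members (a Noetherian ring admits no infinite properly ascending chain $\mathfrak q\subsetneq g\mathfrak q\subsetneq g^{2}\mathfrak q\subsetneq\cdots$), and the Poisson primitive ideals above $\mathfrak q$ that belong to \emph{non-dense} orbits already contribute nonzero primes. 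Thus the crux is to control how the \emph{dense} orbits sit above $\mathfrak q$ — concretely, to bound their number and their partial order, which amounts to understanding the topology of the stratum $\Delta_B\text{-}{\rm spec}_0\, B$ well enough to see that the finitely many $G$-orbits of Poisson primitive ideals it contains consist of locally closed points. Granting this, $\bigcap\{\mathfrak p\in{\rm P. spec}\, B:\mathfrak p\supsetneq\mathfrak q\}$ is a finite intersection of primes strictly containing $\mathfrak q$, hence (as $B$ is a domain) strictly contains $\mathfrak q$; together with the induction this shows all Poisson primitive ideals of $B$ are locally closed, and hence, tracing the reductions back, that $A$ satisfies the Poisson Dixmier–Moeglin equivalence.
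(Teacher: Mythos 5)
Your reductions (passing to $G^{\circ}$, to Poisson domains, identifying $G$-orbits of cores with $G$-orbits of Poisson primitive ideals, and the noetherian induction that disposes of all $\mathfrak q$ with $(\mathfrak q:G)\neq 0$) are sound, but the proposal stops exactly where the theorem actually has content: the Poisson primitive ideals in the dense stratum, i.e.\ those with $(\mathfrak q:G)=0$. You state yourself that one must ``control how the dense orbits sit above $\mathfrak q$'' and then proceed only ``granting this''; that granted statement is the $G$-equivariant analogue of \cite[Proposition 3.7]{BrGor03} and is the whole point. The partial observations you offer do not close it: incomparability of two members of a single orbit (your ascending-chain remark) does not prevent infinitely many pairwise incomparable Poisson primitive ideals from the finitely many dense orbits lying above $\mathfrak q$ with intersection equal to $\mathfrak q$ --- this is precisely the phenomenon in the non-DME examples --- and for a general connected $G$ there is no analogue of Goodearl's torus stratification theorem \cite[Theorem 3.2]{Go06} identifying $\Delta\text{-}{\rm spec}_0\,B$ with the spectrum of a Laurent polynomial ring, so Proposition \ref{prop:finiteGprime}/Corollary \ref{cor:commutative} cannot be invoked.

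The paper fills this gap by arguing geometrically in the maximal spectrum with the symplectic core stratification rather than in ${\rm P.spec}$. By a $G$-equivariant version of Lemma \ref{lem:slc}(iv), $\overline{\mathscr G_i}=\{\mathfrak m : (\mathfrak p:G)\subseteq\mathfrak m\}$ is a union of orbit-cores, so the union $X$ of the dense orbit-cores is open; then one uses the rank of the Poisson structure: the locus of rank $<j$ is a closed $G$-invariant union of cores \cite[Proposition 3.6(1)]{BrGor03}, so $X$ has constant maximal rank $j$, hence every core in $X$ has $j$-dimensional closure (via $\overline{\mathscr C(\mathfrak m)}=\overline{\mathscr L(\mathfrak m)}$, Lemma \ref{lem:slc}), while $\overline{\mathscr C}\setminus\mathscr C$ is a union of cores of strictly smaller dimension; therefore $\mathscr C=\overline{\mathscr C}\cap X$ is locally closed, and Theorem \ref{thm:algebraicDME} concludes. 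Some such dimension or rank argument (or a substitute for it) is indispensable in your framework to show $I_{\mathfrak q}\supsetneq\mathfrak q$ for $\mathfrak q$ in a dense orbit, and without it the proposal is not a proof.
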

\begin{proof}
According to Lemma \ref{lem:slc}(iii), any $G$-orbit of symplectic cores is a disjoint union of $G$-orbits of symplectic leaves. So if there are only finitely many $G$-orbits of symplectic leaves, there are only finitely many $G$-orbits of symplectic cores. So without loss of generality, we may assume that there are only finitely many $G$-orbits of symplectic cores, which are listed as follows
$$\mathscr G_1,\ \mathscr G_2,\ \ldots,\ \mathscr G_t.$$
We argue by noetherian induction to prove that any symplectic core contained in $\mathscr G_i$ is locally closed. We may assume that any symplectic core contained in $\mathscr G_i$, where $\overline{\mathscr G_i}\neq \mathscr A$, is locally closed. Without loss of generality, suppose $\mathscr G_1,\ldots,\mathscr G_n$ are the symplectic cores whose Zariski closure is exactly $\mathscr A$. Write $X=\sqcup_{1\le i\le n}\, \mathscr G_i$. We can assume that $\mathscr A$ is irreducible. We claim that $X$ is a non-empty open set of $\mathscr A$. Notice that we can prove a $G$-equivariant version of Lemma \ref{lem:slc}(iv) such that each $\overline{\mathscr G_i}$ is a disjoint union of $G$-orbits of symplectic cores contained in it. Suppose $\mathscr G_i=\bigcup_{g\in G} g.\mathscr C$ for some symplectic core $\mathscr C$. By Lemma \ref{lem:slc}, we know $\overline{\mathscr C}=\{\mathfrak m\in \mathscr A\,|\, \mathfrak p\subseteq \mathfrak m\}$ for some Poisson primitive ideal $\mathfrak p$. Then it is easy to see that, for any $g\in G$, we have $\overline{g.\mathscr C}= \{\mathfrak m\in \mathscr A\,|\, g(\mathfrak p)\subseteq \mathfrak m\}$. So
$$\overline{\mathscr G_i}~=~\overline{\bigcup_{g\in G}\, g.\mathscr C}~=~\left\{\mathfrak m\in \mathscr A\,\left|\, \left(\bigcap_{g\in G} g(\mathfrak p)\right)\subseteq \mathfrak m\right\}\right.~=~\left\{\mathfrak m\in \mathscr A\,\left|\, (\mathfrak p:G)\subseteq \mathfrak m\right\}\right..$$
Since $(\mathfrak p:G)$ is a Poisson $G$-ideal by Lemma \ref{lem:GPideal}(i), for any $\mathfrak m\in \mathscr A$, we know $\mathfrak m\in\overline{\mathscr G_i}$ if and only if $G.\mathscr C(\mathfrak m)\subseteq \overline{\mathscr G_i}$. So $\overline{\mathscr G_i}$ is a disjoint union of $G$-orbits of symplectic cores. Therefore if $\overline{\mathscr G_i}\neq \mathscr A$, we have $\overline{\mathscr G_i}$ can not contain any $\mathscr G_1,\ldots,\mathscr G_n$ whose closure is $\mathscr A$. Then we know
$$X~=~\mathscr A\setminus\left(\bigsqcup_{\overline{\mathscr G_i}\neq \mathscr A}\ \mathscr G_i\right)~=~\mathscr A\setminus\left(\bigcup_{\overline{\mathscr G_i}\neq \mathscr A}\ \overline{\mathscr G_i}\right).$$
Since there are only finitely many $\mathscr G_i$, we see $X$ is a non-empty open set of $\mathscr A$. This proves our claim.

It remains to show that any symplectic core $\mathscr C$ in $X$ is locally closed. Since $X$ is open, it suffices to show that $\mathscr  C=\overline{\mathscr C}\cap X$. One direction $\mathscr C\subseteq \overline{\mathscr C}\cap X$ is clear. For the other direction, take any $\mathfrak m\in X$ whose rank is maximal among $X$, say ${\rm rk}(\mathfrak m)=j$ and $\mathfrak m\in \mathscr G_1$. Let $\mathscr B$ be the proper closed subvariety of $\mathscr A$ consisting of points whose rank is strictly less than $j$. Since the action of $G$ preserves the rank of point, it is clear that $\mathscr B$ is $G$-invariant. So $\mathscr B$ is a disjoint union of $G$-orbits of symplectic cores by \cite[Proposition 3.6(1)]{BrGor03}. Moreover since $\overline{\mathscr G_i}=\mathscr A$, we have $\mathscr G_i \not\subseteq \mathscr B$ for all $1\le i\le n$. Then we get $X$ is an irreducible open subset of $\mathscr A$ on which the rank is constant and equal to $j$. As a consequence, the closure of any symplectic core contained in $X$ is $j$-dimensional. In particular, $\mathscr C$ is $j$-dimensional. But by Lemma \ref{lem:slc}, one can see that $\overline{\mathscr C}\setminus \mathscr C$ is a disjoint union of symplectic cores whose closures all have dimension strictly less than $j$. Hence we have
$$(\overline{\mathscr C}\cap X)\setminus \mathscr C~\subseteq~(\overline{\mathscr C}\setminus \mathscr C)~\bigcap~\left(X\setminus \mathscr C\right)=\emptyset.$$
So $\mathscr C=\overline{\mathscr C}\cap X$ is locally closed.

Finally, our result follows from Theorem \ref{thm:algebraicDME}.
\end{proof}

%

Now we give some examples by using our main results on stratifications of maximal spectrum of complex affine Poisson algebras.

\begin{example}
Let $S$ be a three-dimensional Sklyanin algebra over the complex numbers. By definition, $S$ is a connected graded noncommuative algebra generated by three indeterminants  $x,y,z$ and subject to three quadratic relations:
$$axy+byx+cz^2~=~ayz+bzy+cx^2~=~azx+bxz+cy^2~=~0$$
for some $[a:b:c]\in \mathbb P^2$ satisfying $abc\neq 0$ and $(3abc)^3\neq (a^3+b^3+c^3)^3$. In \cite{ATV90}, it is shown that $S$ has a central regular element $g$ of degree three and $S/gS\cong B(E,\mathscr L,\sigma)$, the twisted homogeneous coordinate ring of the associated geometric data $(E,\mathscr L,\sigma)$ of $S$. It is further shown that $S$ is module-finite over its center if and only if $|\sigma|<\infty$, where $\sigma$ is some translation on the elliptic curve $E$. 

In the following, we assume that $S$ is module-finite over its center $Z$ and denote $n=|\sigma|<\infty$. We know by \cite[Proposition 5.5(1)]{WWY17} that there exists a unique nonzero Poisson structure (up to scalars) on $Z$ if $g$ is assumed to be in the Poisson center of $Z$. More precisely, according to \cite{ST94}, the center $Z$ of $S$ can be presented as
$$Z=\mathbb C[z_1,z_2,z_3,g]/(F),$$
where $z_1,z_2,z_3$ are three algebraically independent central elements of degree $n$ and $F$ is a single relation of degree $3n$. The Poisson bracket on $Z$ is then determined (up to scalars) as
$$
\{z_1,z_2\}=\partial_{z_3}F,\quad \{z_2,z_3\}=\partial_{z_1}F,\quad\{z_3,z_1\}=\partial_{z_2}F,
$$
with $g$ in the Poisson center of the Poisson algebra $Z$. As a consequence of \cite[Theorem 1.3(2)]{WWY17}, we know the Poisson bracket of $Z$ is algebraic. Moreover, the group $\Sigma:=\mathbb Z_3\times \mathbb C^\times$ acts on $Z$ by Poisson automorphisms where $\mathbb Z_3$ permutes $z_1,z_2,z_3$ and $\mathbb C^\times$ scales all the variables $z_1,z_2,z_3,g$ simultaneously. It is shown also in \cite[Theorem 1.3(3)]{WWY17} that the $\Sigma$-orbits of symplectic cores (and hence symplectic leaves) are finite. Now by our Theorem \ref{thm:algebraicDME} or Theorem \ref{thm:GalgebraDME} we know $Z$ satisfies the Poisson Dixmier-Moeglin equivalence.
\end{example}

\begin{example}
Let $n\ge 2$ be an integer and consider $n-2$ generic polynomials $Q_i$  in $\mathbb C^n$ with coordinates $x_i$, $i=1,\ldots,n$. For any polynomial $\lambda\in \mathbb C[x_1,\ldots,x_n]$, we can define a bilinear differential operation
$$
\{-,-\}: \mathbb C[x_1,\ldots,x_n] \otimes \mathbb C[x_1,\ldots,x_n] \to \mathbb C[x_1,\ldots,x_n]
$$
by the formula
$$
\{f,g\}~=~\lambda\,\frac{df\wedge dg\wedge dQ_1\wedge \cdots \wedge dQ_{n-2}}{dx_1\wedge dx_2\wedge\cdots \wedge dx_n},\quad f,g\in  \mathbb C[x_1,\ldots,x_n].
$$
This operation gives a Poisson algebra structure on  $\mathbb C[x_1,\ldots,x_n]$. The polynomials $Q_i$, $i=1, \ldots, n-2$ are Casimir functions for the Poisson bracket  and any Poisson structure in $\mathbb C^n$ with $n-2$ generic Casimirs $Q_i$ are written in this form.

The case $n = 4$ corresponds to the classical (generalized) Sklyanin quadratic Poisson algebra. The very Sklyanin algebra is associated with the following two quadrics in $\mathbb C^4$:
\begin{align*}
Q_1~&=~x_1^2 + x_2^2 + x_3^2,\\
Q_2~&=~ J_{12}x_1^2 + J_{23}x_2^2 + J_{31}x_3^2+x_4^2,
\end{align*}
where we fix a lattice $\Lambda\subseteq \mathbb C$ and let $\{\theta_{ab}\,|\, ab=00,01,10,11\}$ denote Jacobi's four theta functions with period lattice $\Lambda$ such that
\begin{gather*}
J_{12}=\frac{\theta_{11}(\tau)^2\theta_{01}(\tau)^2}{\theta_{00}(\tau)^2\theta_{10}(\tau)^2},\ J_{23}=\frac{\theta_{11}(\tau)^2\theta_{10}(\tau)^2}{\theta_{00}(\tau)^2\theta_{01}(\tau)^2},\ J_{12}=-\frac{\theta_{11}(\tau)^2\theta_{00}(\tau)^2}{\theta_{01}(\tau)^2\theta_{10}(\tau)^2},
\end{gather*}
for some $\tau\in \mathbb C/\Lambda$. The Poisson brackets with $\lambda=1$ between the affine coordinates are described as follows
\begin{align*}
\{x_i,x_j\}~=~(-1)^{i+j}\, {\rm det}\left(\frac{\partial Q_k}{\partial x_l}\right),\ l\neq i,j\ \text{and}\ i>j.
\end{align*}
According to \cite[Proposition 2.2]{OR02}, these Poisson brackets all have symplectic leaves of small dimensions 0 or 2. More preciously, 2-dimensional symplectic leaves are the smooth locus of the complete intersections $(Q_1-\lambda_1,\ldots,Q_n-\lambda_n)$ excluding the hyperplane $\lambda=0$ and the 0-dimensional symplectic singletons are the points in the singluar locus of $(Q_1-\lambda_1,\ldots,Q_n-\lambda_n)$ besides the hyperplane $\lambda=0$ for all $(\lambda_1,\ldots,\lambda_n)\in \mathbb C^n$. As a consequence, the these Poisson brackets are all algebraic and the corresponding Poisson algebras $\mathbb C[x_1,\ldots,x_n]$ all satisfy the Poisson Dixmier-Moeglin equivalence.
\end{example}

\section*{Acknowledgments} Part of this research work was done during the first and second authors' visit to Shanghai Center for Mathematical Sciences in June-July 2019. They are grateful for the  invitations of the third author and wish to thank Fudan University for its hospitality. Juan Luo is supported by the NSFC (project 1190010260). Quanshui Wu is supported by the NSFC (project 11771085). The authors would also like to thank Jason Bell, Ken Brown, Ken Goodearl, and James Zhang for useful correspondence and comments on this paper. All the authors want to thank the referee for valuable comments.

\end{document}

\endinput